\newtheorem{thm}{Theorem}
\newtheorem{lem}{Lemma}
\newtheorem{prop}{Proposition}
\newtheorem{remark}{Remark}
\begin{document}
%\maketitle
\begin{center}
	{\LARGE  Least squares estimators for discretely observed stochastic processes driven by small fractional noise}
	
	\large
	\vspace{+6mm}
	Shohei Nakajima\footnote{st08m26@akane.waseda.jp \label{tks: nakajima}}; Shota Nakamura\footnote{nakamurashota@akane.waseda.jp \label{tks: nakamura}}${}^{,*}$ and Yasutaka Shimizu\footnote{shimizu@waseda.jp \label{tks: shimizu}} 
	\vspace{+4mm}
	\\
	${}^{\ref{tks: nakajima}, \ref{tks: nakamura},\ref{tks: shimizu}}$Department of Applied Mathematics, Waseda University\\
	\vspace{+5mm}
	December 1, 2021
	\vspace{+2mm}
\end{center}

\begin{abstract}
We study the problem of parameter estimation for discretely observed stochastic differential equations driven by small fractional noise with Hurst index $H\in (0,1)$. Under some conditions, we obtain strong consistency and rate of convergence of the least square estimator(LSE) when small dispersion coefficient $\varepsilon \rightarrow 0$ and $n \rightarrow \infty$. 
\end{abstract}

\vspace{+1mm}
\begin{flushleft}
\quad \quad \quad  {\it MSC2010}: {\bf 62F12}; 
62M09, 60G22\\
\quad \quad \quad {\it Keywords}: Asymptotic distribution of LSE; fractional Brownian motion; stochastic differential equation; Parameter estimation; Least square method; Strong consistency of LSE.
\end{flushleft}

\section{Introduction}
Let $(\Omega,\mathcal{F}, (\mathcal{F}_t), P)$ be a stochastic basis satisfying the usual conditions. We consider the following  stochastic differential equation $X_t^{\theta,\varepsilon}$ with an unkown parameter $\theta$ 
\begin{eqnarray}
\label{sde1}
\left\{
\begin{split}
X_t^{\theta,\varepsilon}&=\int_0^T b(\theta,X_t^{\theta,\varepsilon})dt+\varepsilon W_t^H \quad(0 \leq t \leq T);\\
X_0^{\theta,\varepsilon}&=x_0. 
\end{split}
\right.
\end{eqnarray}
where $x_0 \in \mathbb{R}$ is a given initial condition, $(W_t^H)$  is a fractional Brownian motion with the given Hurst index $H \in (0,1)$, the true parameter $\theta_0$ lies in a certain set $\Theta \subset \mathbb{R}^d$ which will be specified later on, $\{b(\cdot,\theta);\theta \in \Theta\}$ is a known family of drift coefficients. The main purpose of this paper is to study the least square type estimator for the true drift parameter $\theta_0$ based on the sampling data $( X_{k/n} ^{\theta_0,\varepsilon})_{k=1}^n$ with small disperison  $\varepsilon>0$ and large sample size $n$. 

The asymptotic properties of the maximum likelihood estimator(MLE) for stochastic differential equations driven by the fractional Brownian motion are studied by many authors. (see Kleptsyna and Le Breton \cite{Kleptsyna}; Brouste and Kleptsyna \cite{Brouste}; Tudor and Viens \cite{Tudor}; Rao \cite{Rao}; Kubilius \textit{et al.} \cite{Kubilius}; Hu et al. \cite{Hu1}; Lohvinenko and Ralchenko \cite{Lohvinenko}; Hu \textit{et al.} \cite{Hu2};  Tanaka \textit{et al.} \cite{Tanaka} and Chiba \cite{Chiba}) In addition, the strong consistency of LSE for drift parameters is studied by Neuenkirchi and Tindel \cite{Nuenkirch} and the asymptotic normality for discrete observation from the stochastic differential equation driven by the fractional Brownian motion with the Hurst index $H>\frac{1}{2}$  is studied by Nakajima and Shimizu \cite{Nakajima}.

The parameter estimation problems for diffusion processes with small white noise based o continuous-time observations have been well developed. (see e.g. Kutoyants \cite{Kutoyants1} and \cite{Kutoyants2}; Yoshida \cite{Yoshida1} and \cite{Yoshida2}; Uchida and Yoshida \cite{Uchida}) In the case of the small fractional Brownian motion, Nakajima and Shimizu \cite{Nakajima2} studied the asymptotic properties of MLE based on continuous-time observations. However, to the best of the authors' knowledge,  parameter estimation problems on the discrete observation from the stochastic differential equation driven by small fractional Brownian motion have not been studied. Therefore, in this paper, we studied the asymptotic properties of LSE for discretely observed diffusion processes with small fractional Brownian motion with Hurst index $H \in (0,1)$ when $\varepsilon \rightarrow 0, n \rightarrow \infty$. The main tool to obtain the asymptotic properties of estimators is an inequality evaluation of the stochastic integral by Young inequality for the Young integral. (see Young \cite{Young}) Since we can evaluate the stochastic integral at each point by Young inequality, we obtain strong consistency and asymptotic normality in the sense of almost surely convergence. 

This paper is organized as follows. In Section 2, we state our main results and prepare some auxiliary results for the proofs of our main results. All the proofs are given in Section 3. In Section 4, we provide some simulation studies to confirm our main results.

\section{Main results}
In this section,  we investigate our main results. Using high frequency data $(X_{t_i}^{\theta,\varepsilon})_{i=1}^n$ with $t_{i+1}-t_i=\frac{1}{n}$, the contrast function $L_{\varepsilon, n}(\theta)$ is defined as follows
\begin{align*}
Q_{\varepsilon, n}(\theta)&:=n \sum_{k=0}^{n-1}  \left| \delta X_{t_kt_{k+1}}^{\theta_0, \varepsilon} -\frac{1}{n} b(X_{t_k}^{\theta_0, \varepsilon}, \theta) \right|^2 ,\\
L_{\varepsilon,n} (\theta)&:=Q_{\varepsilon, n}(\theta)-Q_{\varepsilon, n} (\theta_0),
\end{align*}
where $\delta X_{t_kt_{k+1}}^{\theta_0, \varepsilon}=X_{t_{k+1}}^{\theta_0, \varepsilon}-X_{t_k}^{\theta_0, \varepsilon}$. Then, the LSE $\hat{\theta}_{n,\varepsilon}$ is defined as
\begin{align*}
\hat{\theta}_{n,\varepsilon}:=\underset{\theta \in \Theta}{\text{argmin}}   L_{n,\varepsilon}^{\alpha} (\theta).
\end{align*}
Our interest in this paper is to obtain the asymptotic properties of $\hat{\theta}_{n,\varepsilon}$. 

To state our main results, we make some notations. Let $(x_t^{\theta})_{0\leq t \leq T}$ be the solution to the underlying ordinary differential equation (ODE) under the true value of the drift parameter:
\begin{eqnarray*}
\left\{
\begin{split}
dx_t^{\theta}&=b \left( x_t^{\theta},\theta \right)dt,\\
x_0^{\theta}&=x_0.
\end{split}
\right.
\end{eqnarray*}
We denote the space of all functions $f: \mathbb{R} \times \Theta \rightarrow \mathbb{R}$ which is $k$ and $l$ times continuous differentiable by $C^{k,l}(\mathbb{R}, \Theta)$. Moreover $C_{\uparrow}^{k,l}(\mathbb{R},\Theta)$ is a class of $f\in C^{k,l}(\mathbb{R},\Theta)$ satisfying that $\underset{\theta \in \Theta}{\sup} \left| \partial_x \partial_{\theta_i} f(x,\theta) \right| \leq C(1+\left|x\right|^{\lambda})$ for universal positive constants $C$ and $\lambda$, where $\theta_i$ is the $i$-th projection of $\theta$. 

We introduce the following assumptions.
\begin{itemize}
\item[(A1)] $\Theta$ is an open bounded convec subset of $\mathbb{R}^d$.
\item[(A2)] $\theta \neq \theta_0$ $\leftrightarrow$ $b(x_t^{\theta_0}, \theta) \neq b(x_t^{\theta_0},\theta_0)$ for at least one value of $t \in [0,1]$.
\item[(A3)] There exists a constant $c>0$ such that
\begin{align*}
\underset{\theta \in \Theta}{\sup} \left| b(x,\theta)- b(y,\theta)\right| &\leq c\left| x-y \right|,\\
\underset{\theta \in \Theta}{\sup} \left| \partial_x b(x,\theta)- \partial_x b(y,\theta)\right| &\leq c\left| x-y \right|,\\
\left| b(x,\theta_1)- b(x, \theta_2) \right| &\leq c \left(1+\left| x \right|^N \right) \left| \theta_1- \theta_2\right|,\\
\underset{\theta \in \Theta}{\sup} \left|b(x,\theta) \right| &\leq c \left(1+|x| \right) ,
\end{align*}
for each $x \in \mathbb{R}$ , $N \in \mathbb{N}$ and $\theta_1,\theta_2 \in \Theta$.
\item[(A4)] $b(\cdot, \cdot) \in C_{\uparrow}^{2,3}\left(\mathbb{R}, \Theta \right)$.
\item[(A5)] $I(\theta_0)=\left( I^{i,j}(\theta_0)\right)_{1\leq i,j \leq d}$ is positive definite, where
\begin{align*}
I^{i,j}(\theta):=\int_0^1 \left( \partial_{\theta_i}b\left(x_s^{\theta_0},\theta\right) \right)^{\top} \partial_{\theta_j}b\left(x_s^{\theta_0},\theta\right) ds.
\end{align*}
\end{itemize}

\begin{thm}
\label{consistency}
Suppose that (A1)--(A3) and either of the following conditions hold:
\begin{itemize}
\item[(1)] $H>\frac{1}{2}$.
\item[(2)] $H\leq \frac{1}{2}$ and $\varepsilon^2 n^{1-H} \rightarrow 0 \quad (n \rightarrow \infty, \varepsilon \downarrow 0)$.
\end{itemize}
Then we have $\hat{\theta}_{n,\varepsilon} \rightarrow \theta_0\  \text{a.s.} \quad(n\rightarrow \infty, \varepsilon \downarrow 0)$.
\end{thm}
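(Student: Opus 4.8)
The plan is to treat $\hat{\theta}_{n,\varepsilon}$ as an M-estimator and run the classical ``uniform convergence of the contrast plus well-separated minimum'' argument, the only non-standard ingredient being the pathwise control of the fractional-noise term. First I would identify the limiting contrast. Writing $\delta X^{\theta_0,\varepsilon}_{t_kt_{k+1}}=\int_{t_k}^{t_{k+1}}b(X^{\theta_0,\varepsilon}_s,\theta_0)\,ds+\varepsilon\,\delta W^H_{t_kt_{k+1}}$ and expanding the square, the $\theta$-independent increment $\delta X^{\theta_0,\varepsilon}_{t_kt_{k+1}}$ cancels between $Q_{\varepsilon,n}(\theta)$ and $Q_{\varepsilon,n}(\theta_0)$; after replacing $\int_{t_k}^{t_{k+1}}b(X^{\theta_0,\varepsilon}_s,\theta_0)\,ds$ by $\tfrac1n b(X^{\theta_0,\varepsilon}_{t_k},\theta_0)$ one is left with
\begin{align*}
L_{\varepsilon,n}(\theta)&=\frac1n\sum_{k=0}^{n-1}\bigl(b(X^{\theta_0,\varepsilon}_{t_k},\theta)-b(X^{\theta_0,\varepsilon}_{t_k},\theta_0)\bigr)^2 -2\sum_{k=0}^{n-1}R_k\bigl(b(X^{\theta_0,\varepsilon}_{t_k},\theta)-b(X^{\theta_0,\varepsilon}_{t_k},\theta_0)\bigr)\\
&\quad -2\varepsilon\sum_{k=0}^{n-1}\delta W^H_{t_kt_{k+1}}\bigl(b(X^{\theta_0,\varepsilon}_{t_k},\theta)-b(X^{\theta_0,\varepsilon}_{t_k},\theta_0)\bigr),
\end{align*}
where $R_k=\int_{t_k}^{t_{k+1}}\bigl(b(X^{\theta_0,\varepsilon}_s,\theta_0)-b(X^{\theta_0,\varepsilon}_{t_k},\theta_0)\bigr)\,ds$ is a discretization remainder. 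The natural candidate limit is $L(\theta):=\int_0^1\bigl(b(x^{\theta_0}_s,\theta)-b(x^{\theta_0}_s,\theta_0)\bigr)^2\,ds$, and I note for later that $L_{\varepsilon,n}(\theta_0)=0=L(\theta_0)$.

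Before handling the three terms I would record two pathwise facts, using only (A3): by Gronwall's inequality and the linear growth bound, $\sup_{0\le t\le1}\lvert X^{\theta_0,\varepsilon}_t\rvert$ is a.s.\ finite and $\sup_{0\le t\le1}\lvert X^{\theta_0,\varepsilon}_t-x^{\theta_0}_t\rvert\le C\varepsilon\lVert W^H\rVert_\infty\to0$ a.s.; and since $W^H$ is a.s.\ $\gamma$-H\"older on $[0,1]$ for every $\gamma<H$, the increments of $X^{\theta_0,\varepsilon}$ over $[t_k,t_{k+1}]$ are $O(n^{-1}+\varepsilon n^{-\gamma})$. The first term above is a Riemann sum: combining the Lipschitz-in-$\theta$ bound of (A3) (with $\Theta$ bounded) for equicontinuity with $X^{\theta_0,\varepsilon}_{t_k}\to x^{\theta_0}_{t_k}$, it converges to $L(\theta)$ uniformly in $\theta\in\overline\Theta$, a.s. For the remainder term, Lipschitz continuity of $b$ gives $\lvert R_k\rvert\le c\,n^{-1}\sup_{s\in[t_k,t_{k+1}]}\lvert X^{\theta_0,\varepsilon}_s-X^{\theta_0,\varepsilon}_{t_k}\rvert$, so $\sum_k\lvert R_k\rvert=O(n^{-1}+\varepsilon n^{-\gamma})\to0$, which together with the a.s.\ boundedness of $b(\cdot,\theta)-b(\cdot,\theta_0)$ kills the second term uniformly in $\theta$.

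The crux is the stochastic term, and this is where the dichotomy in the hypotheses appears. For $H>\tfrac12$ I would recognise $\sum_k\delta W^H_{t_kt_{k+1}}\bigl(b(X^{\theta_0,\varepsilon}_{t_k},\theta)-b(X^{\theta_0,\varepsilon}_{t_k},\theta_0)\bigr)$ as a Riemann--Stieltjes approximation of the Young integral $\int_0^1\bigl(b(X^{\theta_0,\varepsilon}_s,\theta)-b(X^{\theta_0,\varepsilon}_s,\theta_0)\bigr)\,dW^H_s$; since $s\mapsto b(X^{\theta_0,\varepsilon}_s,\theta)-b(X^{\theta_0,\varepsilon}_s,\theta_0)$ is $\gamma$-H\"older with $\gamma+H>1$, Young's inequality bounds this sum by a finite random constant, uniformly in $\theta$, so the prefactor $\varepsilon\to0$ sends the term to $0$ almost surely. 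For $H\le\tfrac12$ the Young integral is unavailable; here I would instead estimate the squared stochastic sum in mean, which produces the factor $\varepsilon^2$ and, using the (negatively correlated) fBm increment covariances, a bound of order $\varepsilon^2 n^{1-2H}$, and then upgrade to almost-sure and uniform-in-$\theta$ convergence by a covering/Borel--Cantelli argument whose summability margin is supplied precisely by the assumption $\varepsilon^2 n^{1-H}\to0$. Making this noise estimate simultaneously uniform in $\theta$ and almost sure in the rough regime $H\le\tfrac12$ is the main obstacle; the other two terms are essentially bookkeeping.

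Finally I would assemble the consistency from $\sup_{\theta\in\overline\Theta}\lvert L_{\varepsilon,n}(\theta)-L(\theta)\rvert\to0$ a.s. The limit $L$ is continuous and, by (A2) together with the continuity of $b(x^{\theta_0}_\cdot,\cdot)$, vanishes only at $\theta_0$; hence by compactness of $\overline\Theta$ (from (A1)) one has $\eta:=\inf\{L(\theta):\theta\in\overline\Theta,\ \lvert\theta-\theta_0\rvert\ge\rho\}>0$ for every $\rho>0$. Since $\hat{\theta}_{n,\varepsilon}$ minimizes $L_{\varepsilon,n}$ and $L_{\varepsilon,n}(\theta_0)=0$, we get $L_{\varepsilon,n}(\hat{\theta}_{n,\varepsilon})\le0$, whence $L(\hat{\theta}_{n,\varepsilon})\le\sup_{\theta\in\overline\Theta}\lvert L_{\varepsilon,n}(\theta)-L(\theta)\rvert\to0$. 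This forces $\lvert\hat{\theta}_{n,\varepsilon}-\theta_0\rvert<\rho$ for all small $\varepsilon$ and large $n$, i.e.\ $\hat{\theta}_{n,\varepsilon}\to\theta_0$ a.s.
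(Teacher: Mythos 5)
Your overall strategy---decomposing $L_{\varepsilon,n}$ into the squared-difference Riemann sum, the discretization remainder, and the noise term, proving uniform a.s.\ convergence to $L(\theta)=\int_0^1\left|b(x_s^{\theta_0},\theta)-b(x_s^{\theta_0},\theta_0)\right|^2ds$, and concluding by the well-separated-minimum argument---is exactly the paper's (its Proposition \ref{prop} combined with the Frydman--Kasonga lemma). Your handling of the Riemann-sum term, of the remainder $\sum_k R_k$, and of the noise term when $H>\frac12$ is also essentially the paper's: in that regime the paper's Lemma \ref{stochastic integral2} compares the discrete sum with the Young integral subinterval by subinterval, and your weaker observation that the sum stays a.s.\ bounded uniformly in $\theta$ (the Lipschitz bound in (A3) is uniform in $\theta$) does suffice, since the prefactor $\varepsilon$ then kills the term.

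The rough case $H\le\frac12$ is where your proposal has a genuine gap, in two places. First, the moment computation is not available as stated: the integrand $b(X_{t_k}^{\theta_0,\varepsilon},\theta)-b(X_{t_k}^{\theta_0,\varepsilon},\theta_0)$ is random and correlated with \emph{all} increments $W_{t_{j+1}}^H-W_{t_j}^H$, and fBm carries no martingale or It\^o-isometry structure, so ``using the fBm increment covariances'' does not yield the claimed $\varepsilon^2n^{1-2H}$ bound; to make any covariance computation legitimate you must first replace $X_{t_k}^{\theta_0,\varepsilon}$ by the deterministic $x_{t_k}^{\theta_0}$, and controlling that replacement error is precisely the crux of the problem. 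Second, even granting a Gaussian-type tail bound, $\varepsilon^2n^{1-H}\to0$ is a convergence hypothesis, not a summability hypothesis: along admissible sequences $(n_j,\varepsilon_j)$ in which $n_j$ grows only logarithmically in $j$ (with $\varepsilon_j$ tuned so that $\varepsilon_j^2n_j^{1-H}\to0$ slowly), the tail probabilities $\exp\left(-c\delta^2/(\varepsilon_j^2n_j^{1-2H})\right)$ are not summable for small $\delta$, so Borel--Cantelli does not apply; moreover any null set produced this way depends on the chosen sequence, whereas the theorem asserts a single pathwise statement valid for every admissible joint limit. The paper's Lemma \ref{stochastic integral1} avoids both issues purely pathwise: it writes $b(X_{t_k}^{\theta_0,\varepsilon},\theta)-b(X_{t_k}^{\theta_0,\varepsilon},\theta_0)$ as its value at $x_{t_k}^{\theta_0}$ plus an error that is $O(\varepsilon)$ uniformly in $k$ and $\theta$; the deterministic part converges pathwise to the Young integral (which exists for every $H$ because $x^{\theta_0}$ is Lipschitz and $W^H$ is $\lambda'$-H\"older for $\lambda'<H$), while the error part is bounded via the discrete H\"older inequality with exponents $\frac{1}{1-H}$ and $\frac{1}{H}$ by $\varepsilon n^{1-H}\bigl(\sum_k|W_{t_{k+1}}^H-W_{t_k}^H|^{1/H}\bigr)^H$, whose last factor is a.s.\ bounded by Neuenkirch and Tindel's variation result. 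Together with the prefactor $\varepsilon$ this gives $O(\varepsilon^2n^{1-H})\to0$ pathwise, uniformly in $\theta$ and simultaneously for all admissible sequences; you would need this (or an equivalent pathwise) argument to close your case (2).
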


\begin{thm}
\label{main3}
Suppose that (A1)--(A5) and either of the following conditions holds:
\begin{itemize}
\item[(1)] $H>\frac{1}{2}$ and $\varepsilon n \rightarrow \infty . $
\item[(2)] $H\leq \frac{1}{2}$, $\varepsilon n^{1-H} \rightarrow 0$ and $\varepsilon n \rightarrow \infty . $
\end{itemize}
Then we have 
\begin{align}
\label{main}
\varepsilon^{-1} \left( \hat{\theta}_{n, \varepsilon} - \theta_0 \right) \rightarrow I^{-1}(\theta_0) S(\theta_0) \quad \text{a.s.} \quad(n\rightarrow \infty, \varepsilon \downarrow 0),
\end{align}
where
\begin{align*}
I(\theta_0)&:=\left( \int_0^T \partial_{\theta_i} b\left(x_s^{\theta_0}, \theta_0 \right) \partial_{\theta_j} b\left(x_s^{\theta_0}, \theta_0 \right) ds \right)_{1\leq i,j \leq d},\\
S(\theta_0)&:=\left( \int_0^T \partial_{\theta_i} b\left( x_s^{\theta_0}, \theta_0 \right) dW_s^H \right)_{1\leq i,j \leq d}.
\end{align*}
\end{thm}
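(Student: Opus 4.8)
The plan is to follow the classical route for M-estimators: localize using the consistency already established, expand the score of the contrast around $\theta_0$, and identify the almost sure limits of the rescaled score and of the Hessian. By Theorem \ref{consistency} and the openness of $\Theta$ (A1), the minimizer $\hat{\theta}_{n,\varepsilon}$ lies in the interior of $\Theta$ for all large $n$ and small $\varepsilon$ almost surely, so the first order condition $\partial_\theta Q_{\varepsilon,n}(\hat{\theta}_{n,\varepsilon})=0$ holds. A componentwise Taylor expansion then gives, for some $\tilde{\theta}$ on the segment joining $\hat{\theta}_{n,\varepsilon}$ and $\theta_0$,
\begin{align*}
\varepsilon^{-1}\left( \hat{\theta}_{n,\varepsilon} - \theta_0 \right) = -\left[ \partial_\theta^2 Q_{\varepsilon,n}(\tilde{\theta}) \right]^{-1} \varepsilon^{-1}\partial_\theta Q_{\varepsilon,n}(\theta_0),
\end{align*}
so it remains to prove $\partial_\theta^2 Q_{\varepsilon,n}(\tilde{\theta})\to 2I(\theta_0)$ and $\varepsilon^{-1}\partial_\theta Q_{\varepsilon,n}(\theta_0)\to -2S(\theta_0)$ almost surely; the stated limit then follows from $-[2I(\theta_0)]^{-1}(-2S(\theta_0))=I^{-1}(\theta_0)S(\theta_0)$, the inverse existing by (A5).

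For the score, differentiating and using the dynamics \eqref{sde1} to write $\delta X_{t_k t_{k+1}}^{\theta_0,\varepsilon}-\tfrac1n b(X_{t_k}^{\theta_0,\varepsilon},\theta_0)=\int_{t_k}^{t_{k+1}}[b(X_s^{\theta_0,\varepsilon},\theta_0)-b(X_{t_k}^{\theta_0,\varepsilon},\theta_0)]\,ds+\varepsilon(W_{t_{k+1}}^H-W_{t_k}^H)$, I obtain
\begin{align*}
\varepsilon^{-1}\partial_{\theta_i}Q_{\varepsilon,n}(\theta_0) &= -2\sum_{k=0}^{n-1}\partial_{\theta_i}b\left(X_{t_k}^{\theta_0,\varepsilon},\theta_0\right)\left(W_{t_{k+1}}^H - W_{t_k}^H\right)\\
&\quad - 2\varepsilon^{-1}\sum_{k=0}^{n-1}\partial_{\theta_i}b\left(X_{t_k}^{\theta_0,\varepsilon},\theta_0\right)\int_{t_k}^{t_{k+1}}\left[b\left(X_s^{\theta_0,\varepsilon},\theta_0\right) - b\left(X_{t_k}^{\theta_0,\varepsilon},\theta_0\right)\right]ds.
\end{align*}
The first sum is a Young sum, which I would show converges to $S_i(\theta_0)=\int_0^T\partial_{\theta_i}b(x_s^{\theta_0},\theta_0)\,dW_s^H$ by first comparing it with the same sum evaluated along the limiting ODE path $x^{\theta_0}$ (whose limit is $S_i(\theta_0)$ by the Young inequality \cite{Young}, since $s\mapsto\partial_{\theta_i}b(x_s^{\theta_0},\theta_0)$ is Lipschitz while $W^H$ is $(H-\delta)$-Hölder, so their exponents sum to more than one), and then controlling the replacement error $\sum_k[\partial_{\theta_i}b(X_{t_k}^{\theta_0,\varepsilon},\theta_0)-\partial_{\theta_i}b(x_{t_k}^{\theta_0},\theta_0)](W_{t_{k+1}}^H-W_{t_k}^H)$ via the a.s. bound $\sup_{0\le t\le T}|X_t^{\theta_0,\varepsilon}-x_t^{\theta_0}|=O(\varepsilon)$ coming from Gronwall's inequality applied to \eqref{sde1}. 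The second sum is handled by (A3) and $|X_s^{\theta_0,\varepsilon}-X_{t_k}^{\theta_0,\varepsilon}|=O(n^{-1}+\varepsilon n^{-H+\delta})$, giving the order $O((\varepsilon n)^{-1}+n^{-H+\delta})$, which vanishes under $\varepsilon n\to\infty$.

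For the Hessian, a second differentiation yields
\begin{align*}
\partial_{\theta_i}\partial_{\theta_j}Q_{\varepsilon,n}(\theta) &= \frac{2}{n}\sum_{k=0}^{n-1}\partial_{\theta_i}b\left(X_{t_k}^{\theta_0,\varepsilon},\theta\right)\partial_{\theta_j}b\left(X_{t_k}^{\theta_0,\varepsilon},\theta\right)\\
&\quad - 2\sum_{k=0}^{n-1}\left( \delta X_{t_k t_{k+1}}^{\theta_0,\varepsilon} - \tfrac{1}{n}b\left(X_{t_k}^{\theta_0,\varepsilon},\theta\right) \right)\partial_{\theta_i}\partial_{\theta_j}b\left(X_{t_k}^{\theta_0,\varepsilon},\theta\right).
\end{align*}
The first sum is a Riemann sum converging, uniformly in $\theta$ on $\Theta$ by (A4), to $I^{ij}(\theta)$, and thus contributes $2I(\theta_0)$ once $\tilde{\theta}\to\theta_0$. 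In the residual sum I would split the parenthesis into the noise increment $\varepsilon(W_{t_{k+1}}^H-W_{t_k}^H)$, the drift discretization term, and $\tfrac1n[b(X_{t_k}^{\theta_0,\varepsilon},\theta_0)-b(X_{t_k}^{\theta_0,\varepsilon},\theta)]$: the first contributes $O(\varepsilon)$ for $H>\tfrac12$ (via the Young sum) or $O(\varepsilon n^{1-H+\delta})$ for $H\le\tfrac12$ (by the crude bound $\sum_k|W_{t_{k+1}}^H-W_{t_k}^H|\le Cn^{1-H+\delta}$), the second is $O(n^{-1}+\varepsilon n^{-H+\delta})$, and the third is $O(|\tilde{\theta}-\theta_0|)$; all vanish under the stated hypotheses together with $\tilde{\theta}\to\theta_0$. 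The uniform-in-$\theta$ control needed to evaluate at the random point $\tilde{\theta}$ is precisely where (A4) and the polynomial growth built into $C_{\uparrow}^{2,3}(\mathbb{R},\Theta)$ are used.

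The main obstacle is the regime $H\le\tfrac12$: the Young integral machinery that cleanly disposes of the fractional-noise sums when $H>\tfrac12$ is no longer available, so the sharp rate condition $\varepsilon n^{1-H}\to0$ must be invoked to kill, through direct pathwise Hölder estimates on $W^H$, exactly the terms $\sum_k[\partial_{\theta_i}b(X_{t_k}^{\theta_0,\varepsilon},\theta_0)-\partial_{\theta_i}b(x_{t_k}^{\theta_0},\theta_0)](W_{t_{k+1}}^H-W_{t_k}^H)$ and $\sum_k\varepsilon\,(W_{t_{k+1}}^H-W_{t_k}^H)\,\partial_{\theta_i}\partial_{\theta_j}b(X_{t_k}^{\theta_0,\varepsilon},\theta_0)$ that the Young inequality handles for free when $H>\tfrac12$. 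Tracking the interplay between $\varepsilon n\to\infty$, which controls the deterministic discretization error after division by $\varepsilon$, and $\varepsilon n^{1-H}\to0$, which controls the fractional fluctuations, is the delicate bookkeeping behind the two separate sets of hypotheses.
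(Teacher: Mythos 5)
Your overall architecture coincides with the paper's: consistency plus (A1) make $\hat{\theta}_{n,\varepsilon}$ eventually interior, the first-order condition and a mean-value expansion reduce the theorem to the a.s.\ limits of the rescaled score and of the Hessian, and your two decompositions (noise sum plus drift-discretization term for the score; Riemann sum plus residual for the Hessian) are exactly the paper's $H^{(1)}_{n,\varepsilon}+H^{(2)}_{n,\varepsilon}$ and $K^{(1)}_{n,\varepsilon}+K^{(2)}_{n,\varepsilon}$ (the paper packages the M-estimator step via $G_{n,\varepsilon}=-\tfrac12\partial_\theta Q_{n,\varepsilon}$, $K_{n,\varepsilon}=\partial_\theta G_{n,\varepsilon}$, citing Long--Shimizu--Sun, pp.~431--432). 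The genuine problems are in how you control the fractional-noise sums. For $H>\frac12$ you propose to control the replacement error $\sum_{k}\bigl[\partial_{\theta_i}b(X^{\theta_0,\varepsilon}_{t_k},\theta_0)-\partial_{\theta_i}b(x^{\theta_0}_{t_k},\theta_0)\bigr](W^H_{t_{k+1}}-W^H_{t_k})$ by the sup-norm bound $\sup_{0\le t\le T}|X^{\theta_0,\varepsilon}_t-x^{\theta_0}_t|=O(\varepsilon)$. That cannot close: $\sum_k|W^H_{t_{k+1}}-W^H_{t_k}|\asymp n^{1-H}\to\infty$ a.s., and condition (1) imposes no upper rate of $\varepsilon$ against $n$ (take $\varepsilon=n^{-(1-H)/2}$, which satisfies $\varepsilon n\to\infty$ while $\varepsilon n^{1-H}\to\infty$). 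What is actually needed --- and what constitutes the core of the paper's Lemma \ref{stochastic integral2} --- is smallness of the \emph{H\"older norm} $\sup_{\theta\in\Theta}\|\partial_{\theta}b(X^{\theta_0,\varepsilon}_\cdot,\theta)-\partial_{\theta}b(x^{\theta_0}_\cdot,\theta)\|_{\lambda;[0,T]}$ for some $\lambda$ with $2\lambda>1$, so that Young's inequality (Lemma \ref{Young}) applies on all of $[0,T]$; the paper obtains this via Z\"ahle's change-of-variable formula and a page of term-by-term estimates (alternatively one can interpolate the $O(\varepsilon)$ sup-norm bound against the $O(1)$ $\lambda$-H\"older bound). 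Some such argument must be supplied: your remark that Young's inequality handles these terms ``for free'' is precisely the step that is not free.

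In the regime $H\le\frac12$, your crude bound $\sum_k|W^H_{t_{k+1}}-W^H_{t_k}|\le Cn^{1-H+\delta}$ produces errors of order $\varepsilon n^{1-H+\delta}$, which do \emph{not} vanish under the stated hypothesis $\varepsilon n^{1-H}\to0$: take $\varepsilon=n^{H-1}/\log n$, so that $\varepsilon n^{1-H}=1/\log n\to0$ and $\varepsilon n\to\infty$, yet $\varepsilon n^{1-H+\delta}\to\infty$ for every fixed $\delta>0$. The paper avoids this $n^{\delta}$ loss in Lemma \ref{stochastic integral1} by the discrete H\"older inequality with exponents $\frac{1}{1-H}$ and $\frac{1}{H}$, bounding the replacement error by
\begin{align*}
n^{1-H}\,\sup_{0\le t\le T}\bigl|X^{\theta_0,\varepsilon}_t-x^{\theta_0}_t\bigr|\left(\sum_{k=0}^{n-1}\bigl|W^H_{t_{k+1}}-W^H_{t_k}\bigr|^{1/H}\right)^{H},
\end{align*}
and then invoking the a.s.\ boundedness of the $\frac1H$-variation of fractional Brownian motion (Neuenkirch--Tindel, Lemmas 2.5 and 2.6), which yields exactly the rate $\varepsilon n^{1-H}$. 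The same repair is needed for the Hessian noise term $\varepsilon\sum_k\partial_{\theta_i}\partial_{\theta_j}b(X^{\theta_0,\varepsilon}_{t_k},\theta)(W^H_{t_{k+1}}-W^H_{t_k})$, which you bound the same way. Both gaps are repairable with the paper's tools, but as written your estimates fail under the theorem's stated rate conditions.
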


\begin{remark}
The stochastic integral $S(\theta_0):=\int_0^T \partial_{\theta_i} b \left( x_s^{\theta_0}, \theta_0 \right) dW_s^H$  is defined  as Young integral. (see Young \cite{Young})  When $H = 1/2$, $S(\theta_0)$ is coincide with the It\^{o} integral. 
\end{remark}

\section{Proof}

\subsection{Auxiliary results}
We shall prepare a stochastic integral for fractional Brownian motion and its properties. Let $C_\lambda(A)$ be the space of all functions $f:A \rightarrow \mathbb{R}$ which is $\lambda$-H\"{o}lder continuous function. It is well known that the Riemann-Stieltjes integral $\int_a^b f(t) dg(t)$ exists for any $f \in C_{p}([a,b])$ and $g \in C_{q}([a,b])$ with $p+q >1$. (see Young \cite{Young}) Also,  the  the change  of variable is valid as the following. (see Z\"{a}hle \cite{Zahle}) 
\begin{lem}
Let $f \in C_{p}([a,b])$ with $p>\frac{1}{2}$ and $F \in C^1(\mathbb{R})$. Then we have
\begin{align*}
F\left( f(y) \right) -F \left(f(a) \right)=\int_a^y F'\left( f(t) \right) df(t),
\end{align*}
for any $y \in [a,b]$.
\end{lem}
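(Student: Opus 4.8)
The plan is to realize both sides as limits of Riemann--Stieltjes sums along a vanishing-mesh sequence of partitions and to reconcile them by the mean value theorem. First I would fix $y\in[a,b]$ and record that $K:=f([a,b])$ is a compact interval (it is the continuous image of an interval), so $F$ and $F'$ are bounded and uniformly continuous on $K$. Writing $[f]_p:=\sup_{s\neq t}|f(t)-f(s)|/|t-s|^p<\infty$ for the $p$-Hölder seminorm, the integrand $t\mapsto F'(f(t))$ is continuous and (when $F'$ is locally Hölder) lies in $C_q$ with $q>1-p$, so that the Young integral $\int_a^y F'(f(t))\,df(t)$ exists and equals $\lim_{|\pi|\to0}S_\pi$, where for a partition $\pi:\,a=t_0<\cdots<t_m=y$ I set $S_\pi:=\sum_{k=0}^{m-1}F'(f(t_k))\,(f(t_{k+1})-f(t_k))$ and $|\pi|:=\max_k(t_{k+1}-t_k)$. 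This step rests on the existence result quoted above, whose hypothesis is met because $p>\tfrac12$ gives $p+p=2p>1$.

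Next I would telescope the left-hand side, $F(f(y))-F(f(a))=\sum_{k=0}^{m-1}\big(F(f(t_{k+1}))-F(f(t_k))\big)$, and apply the mean value theorem to each summand. Since $F\in C^1$, for every $k$ there is a point $\xi_k$ lying between $f(t_k)$ and $f(t_{k+1})$ (hence $\xi_k\in K$ by convexity of $K$) with $F(f(t_{k+1}))-F(f(t_k))=F'(\xi_k)\,(f(t_{k+1})-f(t_k))$. Subtracting $S_\pi$ then gives the exact identity
\[
F(f(y))-F(f(a))-S_\pi=\sum_{k=0}^{m-1}\big(F'(\xi_k)-F'(f(t_k))\big)\,(f(t_{k+1})-f(t_k))=:E_\pi .
\]
Because this holds for every $\pi$ while $S_\pi\to\int_a^y F'(f)\,df$, the lemma follows once I show $E_\pi\to0$ as $|\pi|\to0$.

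The heart of the matter — and the step I expect to be the main obstacle — is controlling $E_\pi$. Using $|\xi_k-f(t_k)|\le|f(t_{k+1})-f(t_k)|\le[f]_p\,(t_{k+1}-t_k)^p$ together with the modulus of continuity $\omega$ of $F'$ on $K$, I obtain
\[
|E_\pi|\le\sum_{k=0}^{m-1}\omega\!\big([f]_p\,(t_{k+1}-t_k)^p\big)\,[f]_p\,(t_{k+1}-t_k)^p .
\]
The delicate point is that $\sum_k(t_{k+1}-t_k)^p$ need not stay bounded as $|\pi|\to0$, so the bare modulus-of-continuity bound is not conclusive by itself; one must exploit the surplus regularity encoded in $2p>1$. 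Concretely, whenever $F'$ is $\gamma$-Hölder on $K$ with $p(1+\gamma)>1$, the estimate sharpens to $|E_\pi|\le C\sum_k(t_{k+1}-t_k)^{p(1+\gamma)}\le C\,|\pi|^{\,p(1+\gamma)-1}(y-a)\to0$, which closes the argument cleanly. For the borderline case in which $F$ is merely $C^1$, so that $F'$ carries only a modulus of continuity, this counting argument is no longer automatic, and this is precisely where I would invoke the fractional integration-by-parts representation of the integral from Z\"{a}hle \cite{Zahle} (or approximate $F'$ uniformly on $K$ by smoother functions and pass to the limit). Combining $E_\pi\to0$ with $S_\pi\to\int_a^y F'(f)\,df$ yields the asserted change of variables formula.
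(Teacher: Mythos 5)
First, a point of reference: the paper does not actually prove this lemma --- it is quoted as a known change-of-variables result with only the pointer to Z\"{a}hle \cite{Zahle}, so there is no internal proof to compare against. Your argument is the standard telescoping/mean-value-theorem proof of the Young chain rule, and it is correct and complete in the regime you isolate: if $F'$ is $\gamma$-H\"{o}lder on $K=f([a,b])$ with $p(1+\gamma)>1$, then $F'\circ f\in C_{\gamma p}$ with $\gamma p+p>1$, Young's existence theorem applies, and your bound $|E_\pi|\le C\sum_k(t_{k+1}-t_k)^{p(1+\gamma)}\le C\,|\pi|^{p(1+\gamma)-1}(y-a)$ closes the argument. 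One remark that would streamline (indeed trivialize) the MVT step: since $\xi_k$ lies between $f(t_k)$ and $f(t_{k+1})$, the intermediate value theorem gives $\xi_k=f(s_k)$ for some $s_k\in[t_k,t_{k+1}]$, so the left-hand side is \emph{itself} a tagged Riemann--Stieltjes sum for every partition. Hence the identity is automatic once the Riemann--Stieltjes integral exists; the entire content of the lemma is existence.

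That observation also pinpoints the genuine gap in your proposal for the lemma \emph{as stated}, where $F$ is only $C^1$. Two steps fail there, and you flag both but fill neither. (i) Existence: Young's theorem does not apply, because $F'\circ f$ inherits only the modulus of continuity $\omega([f]_p|t-s|^p)$ from $F'$, which for merely continuous $F'$ need not be H\"{o}lder of any positive order (nor of finite $q$-variation for any $q$); so the convergence $S_\pi\to\int_a^y F'(f)\,df$ that your argument presupposes is precisely what is in question. (ii) The error term: as you note, $m^{1-p}\,\omega([f]_p m^{-p})$ need not vanish unless $\omega(u)=o(u^{(1-p)/p})$, which a bare $C^1$ hypothesis does not provide. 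Your two fallbacks do not repair this within the proposal itself: invoking Z\"{a}hle's fractional integration-by-parts representation is not a step of your proof but a citation of essentially the result being proved (which is exactly what the paper does), and the mollification route is circular as sketched --- if $F_n\to F$ in $C^1$ locally, then $\int_a^y F_n'(f)\,df=F_n(f(y))-F_n(f(a))$ certainly converges, but identifying the limit with $\int_a^y F'(f)\,df$ requires knowing that this last integral exists, which is again the missing point (uniform convergence $F_n'\to F'$ gives no control on the H\"{o}lder norms entering Young's inequality). So your write-up proves a genuinely weaker statement ($F\in C^{1+\gamma}$ with $p(1+\gamma)>1$) and, for the borderline $C^1$ case, reduces to the same external reference on which the paper relies.
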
 

For estimating the Young integral, we note the following estimate, which can be found e.g. in Young \cite{Young}.
\begin{lem}
\label{Young}
Let $f \in C_{\lambda}([a,b])$ and  $g \in C_{\mu}([a,b])$ with $\lambda + \mu>1$. Then, there exists a constant $c_{\lambda, \mu}$ such that
\begin{align*}
\left| \int_a^b \left( f(s)- f(a) \right) dg(s) \right| \leq c_{\lambda, \mu} \left\| f \right\|_{\lambda;[a,b]} \left\| g \right\|_{\mu;[a,b]} |b-a|^{\lambda ; \mu},
\end{align*}
where $\| \cdot \|_{\lambda;[a,b]}$ is the H\"{older} norm on $C_{\lambda}([a,b])$.
\end{lem}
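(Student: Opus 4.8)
The plan is to establish this as the classical Young maximal inequality by the point-elimination (coarsening) argument, which is self-contained and makes transparent where the condition $\lambda+\mu>1$ is used (the exponent on $|b-a|$ is, of course, $\lambda+\mu$). First I would fix a partition $\pi: a=t_0<t_1<\cdots<t_N=b$ and work with the left-point Riemann--Stieltjes sum $S(\pi)=\sum_{k=0}^{N-1} f(t_k)\left(g(t_{k+1})-g(t_k)\right)$. Since $g(b)-g(a)=\sum_k\left(g(t_{k+1})-g(t_k)\right)$, the quantity to be controlled is the limit of
\[
\Gamma(\pi):=S(\pi)-f(a)\left(g(b)-g(a)\right)=\sum_{k=0}^{N-1}\left(f(t_k)-f(a)\right)\left(g(t_{k+1})-g(t_k)\right),
\]
so it suffices to bound $|\Gamma(\pi)|$ uniformly over all $\pi$ by $c_{\lambda,\mu}\|f\|_{\lambda}\|g\|_{\mu}|b-a|^{\lambda+\mu}$ and then pass to the limit.

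The core is a single elimination step. Among the $N-1$ interior nodes there is, by the pigeonhole estimate coming from the telescoping bound $\sum_{j=1}^{N-1}\left(t_{j+1}-t_{j-1}\right)=\left(t_N-t_0\right)+\left(t_{N-1}-t_1\right)\leq 2(b-a)$, an index $j$ with $t_{j+1}-t_{j-1}\leq \tfrac{2(b-a)}{N-1}$. Deleting $t_j$ gives a partition $\pi'$ with $N-1$ intervals, and a direct computation shows $\Gamma(\pi)-\Gamma(\pi')=\left(f(t_j)-f(t_{j-1})\right)\left(g(t_{j+1})-g(t_j)\right)$, whose modulus is at most $\|f\|_{\lambda}\|g\|_{\mu}\,|t_{j+1}-t_{j-1}|^{\lambda+\mu}$ (each factor being dominated by the span $t_{j+1}-t_{j-1}$ raised to the appropriate exponent). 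Iterating this elimination down to the trivial partition $\{a,b\}$, for which $\Gamma=0$, and summing the per-step errors indexed by the current interval count $m=N,N-1,\dots,2$ yields
\[
|\Gamma(\pi)|\leq 2^{\lambda+\mu}\|f\|_{\lambda}\|g\|_{\mu}\,|b-a|^{\lambda+\mu}\sum_{k=1}^{N-1} k^{-(\lambda+\mu)}.
\]
Here the hypothesis $\lambda+\mu>1$ enters decisively: the series is dominated by $\zeta(\lambda+\mu)<\infty$, so the bound is uniform in $\pi$ with $c_{\lambda,\mu}=2^{\lambda+\mu}\zeta(\lambda+\mu)$.

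Finally I would confirm that the net of Riemann--Stieltjes sums is Cauchy as the mesh tends to zero, by comparing any two partitions through a common refinement and invoking the same elimination estimate on each subinterval (the choice of tags being immaterial, since changing tags perturbs a sum by a quantity of order $\mathrm{mesh}^{\lambda+\mu-1}\to 0$); the uniform bound then passes to the limiting Young integral. The main obstacle is the bookkeeping of the iterative coarsening: one must check that the pigeonhole selection applies afresh at every stage (it does, as it only uses the current number of intervals), that the errors are measured against the fixed base point $f(a)$ throughout so the telescoping identity for $\Gamma$ is preserved, and that the step errors line up with a convergent $p$-series. Once this combinatorial skeleton is in place, the Hölder estimates on each factor and the passage to the limit are routine.
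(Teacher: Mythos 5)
Your proof is correct: the point-elimination argument — the pigeonhole selection of an interior node $t_j$ with $t_{j+1}-t_{j-1}\leq 2(b-a)/(N-1)$, the identity $\Gamma(\pi)-\Gamma(\pi')=\left(f(t_j)-f(t_{j-1})\right)\left(g(t_{j+1})-g(t_j)\right)$, and the summation of the per-step errors against the convergent series $\sum_k k^{-(\lambda+\mu)}$, yielding $c_{\lambda,\mu}=2^{\lambda+\mu}\zeta(\lambda+\mu)$ — is precisely the classical proof of this inequality. The paper does not prove the lemma at all (it is quoted from Young, 1936), and your argument is the standard proof of that cited result, so the two agree in substance; note only that the exponent written $|b-a|^{\lambda;\mu}$ in the paper's statement is a typo for $|b-a|^{\lambda+\mu}$, which you have interpreted correctly.
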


\subsection{ Proof of the strong consistency}
We consider the following proposition which is the convergence of the contrast function $L_{n,\varepsilon}(\theta)$ uniformly with respect to $\theta \in \Theta$.

\begin{prop}
\label{prop}
Suppose that (A1)--(A3) and $ \varepsilon^2 n^{\left( 1-H \right)} \rightarrow 0 \quad(n \rightarrow \infty, \varepsilon \downarrow 0)$ hold.Then we have
\begin{align*}
\underset{\theta \in \Theta}{\sup} \left| Q_{n,\varepsilon} (\theta) - Q_{n,\varepsilon} (\theta_0) - \int_0^T \left| b(x_s^{\theta_0},\theta)- b(x_s^{\theta_0}, \theta_0) \right|^2 ds \right| \rightarrow 0 \quad \left(n \rightarrow \infty, \varepsilon \downarrow 0\right).
\end{align*}
\end{prop}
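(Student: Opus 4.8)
The plan is to expand the difference $Q_{n,\varepsilon}(\theta)-Q_{n,\varepsilon}(\theta_0)$ explicitly from the dynamics \eqref{sde1}, isolate the Riemann-sum term that produces the target integral, and show the two remaining terms vanish uniformly in $\theta$. Writing $X_s:=X_s^{\theta_0,\varepsilon}$, $x_s:=x_s^{\theta_0}$, $\delta W_k:=W^H_{t_{k+1}}-W^H_{t_k}$, and setting
\[
g_k(\theta):=b(X_{t_k},\theta)-b(X_{t_k},\theta_0),\qquad r_k:=\int_{t_k}^{t_{k+1}}\bigl(b(X_s,\theta_0)-b(X_{t_k},\theta_0)\bigr)\,ds,
\]
the identity $\delta X_{t_kt_{k+1}}^{\theta_0,\varepsilon}=\tfrac1n b(X_{t_k},\theta_0)+r_k+\varepsilon\delta W_k$ and a direct expansion of the squares give
\[
Q_{n,\varepsilon}(\theta)-Q_{n,\varepsilon}(\theta_0)=\underbrace{\tfrac1n\sum_{k=0}^{n-1}g_k(\theta)^2}_{=:A_{n,\varepsilon}(\theta)}\;\underbrace{-\,2\sum_{k=0}^{n-1}r_k g_k(\theta)}_{=:B_{n,\varepsilon}(\theta)}\;\underbrace{-\,2\varepsilon\sum_{k=0}^{n-1}g_k(\theta)\delta W_k}_{=:C_{n,\varepsilon}(\theta)}.
\]
I would then show $\sup_\theta|A_{n,\varepsilon}(\theta)-\int_0^T(b(x_s,\theta)-b(x_s,\theta_0))^2ds|\to0$, $\sup_\theta|B_{n,\varepsilon}(\theta)|\to0$, and $\sup_\theta|C_{n,\varepsilon}(\theta)|\to0$.

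First I would record the pathwise estimates used throughout. Subtracting the ODE from \eqref{sde1} and using the Lipschitz bound in (A3) gives $|X_t-x_t|\le c\int_0^t|X_s-x_s|ds+\varepsilon\sup_{u\le T}|W^H_u|$, so Gronwall yields $\rho_\varepsilon:=\sup_{t\le T}|X_t-x_t|\le \varepsilon e^{cT}\sup_{u\le T}|W^H_u|\to0$ a.s.\ as $\varepsilon\downarrow0$; in particular $\rho_\varepsilon=O(\varepsilon)$ and $\sup_{t\le T}|X_t|$ is a.s.\ bounded for small $\varepsilon$. Recall also that a.s.\ $W^H\in C_\mu([0,T])$ for every $\mu<H$, with finite H\"older seminorm $\|W^H\|_\mu$. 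Combining (A3) with the boundedness of $\Theta$ and of $X$ produces a random constant $C$ (uniform in $n,\theta,\varepsilon$ for small $\varepsilon$) with $\sup_{k,\theta}|g_k(\theta)|\le C$ and, writing $\bar g_k(\theta):=b(x_{t_k},\theta)-b(x_{t_k},\theta_0)$, $\sup_{k,\theta}|g_k(\theta)-\bar g_k(\theta)|\le 2c\rho_\varepsilon$.

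For $A_{n,\varepsilon}$, factoring $g_k^2-\bar g_k^2=(g_k-\bar g_k)(g_k+\bar g_k)$ and bounding the factors shows that replacing each $X_{t_k}$ by $x_{t_k}$ costs at most $2cC\rho_\varepsilon\to0$ uniformly in $\theta$, while the resulting Riemann sum $\tfrac1n\sum_k\bar g_k(\theta)^2$ converges to $\int_0^T(b(x_s,\theta)-b(x_s,\theta_0))^2ds$, uniformly in $\theta$ because the integrand is equicontinuous in $s$ uniformly over the bounded set $\Theta$ (again by (A3)). For $B_{n,\varepsilon}$, the increment bound $|X_s-X_{t_k}|\le C/n+\varepsilon\|W^H\|_\mu n^{-\mu}$ on $[t_k,t_{k+1}]$ (from linear growth of $b$ and the $\mu$-H\"older continuity of $W^H$) together with Lipschitzness gives $|r_k|\le cn^{-1}(C/n+\varepsilon\|W^H\|_\mu n^{-\mu})$, whence $\sup_\theta|B_{n,\varepsilon}(\theta)|\le 2C\sum_k|r_k|\le 2cC(Cn^{-1}+\varepsilon\|W^H\|_\mu n^{-\mu})\to0$.

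The main difficulty is the fractional-noise term $C_{n,\varepsilon}$, where I would split $g_k(\theta)=\bar g_k(\theta)+h_k(\theta)$ with $h_k(\theta):=g_k(\theta)-\bar g_k(\theta)$. Since $s\mapsto x_s$ is $C^1$, the map $s\mapsto\bar g(s,\theta):=b(x_s,\theta)-b(x_s,\theta_0)$ is Lipschitz, so $\sum_k\bar g_k(\theta)\delta W_k$ is a Riemann--Stieltjes sum that, by Lemma \ref{Young} applied on each $[t_k,t_{k+1}]$, differs from the Young integral $\int_0^T\bar g(s,\theta)\,dW^H_s$ by at most $c_{1,\mu}\|\bar g\|_{1}\|W^H\|_\mu n^{-\mu}$; both the integral and this error are bounded uniformly in $\theta$ and $n$, so $2\varepsilon|\sum_k\bar g_k(\theta)\delta W_k|\le\varepsilon M'\to0$ uniformly. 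For the remaining part I would use $\sup_{k,\theta}|h_k(\theta)|\le2c\rho_\varepsilon$ together with the crude $\ell^1$ bound $\sum_k|\delta W_k|\le\|W^H\|_\mu n^{1-\mu}$ to get $2\varepsilon|\sum_k h_k(\theta)\delta W_k|\le 4c\varepsilon\rho_\varepsilon\|W^H\|_\mu n^{1-\mu}\lesssim \varepsilon^2 n^{1-\mu}$, using $\rho_\varepsilon=O(\varepsilon)$. I expect this last estimate to be the crux: when $H\le\tfrac12$ the integrand built from the genuinely rough path $X$ is only $\mu$-H\"older with $2\mu<1$, so no Young integral of $h(\cdot,\theta)$ against $W^H$ exists and one cannot improve on $\sum_k|\delta W_k|\asymp n^{1-\mu}$; it is precisely the smallness $\rho_\varepsilon=O(\varepsilon)$ that compensates, and the hypothesis $\varepsilon^2 n^{1-H}\to0$ (taking $\mu<H$ close to $H$) is exactly what makes the compensation succeed.
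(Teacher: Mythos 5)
Your decomposition of $Q_{n,\varepsilon}(\theta)-Q_{n,\varepsilon}(\theta_0)$ into $A_{n,\varepsilon}$, $B_{n,\varepsilon}$, $C_{n,\varepsilon}$ is exactly the paper's Lemma \ref{1}, and your treatments of $A_{n,\varepsilon}$, of $B_{n,\varepsilon}$, and of the deterministic part $\sum_k\bar g_k(\theta)\delta W_k$ of $C_{n,\varepsilon}$ are sound and parallel the paper's Lemma \ref{riemann} and its use of Lemma \ref{Young}. The gap is in your very last estimate, which you yourself flag as the crux. You bound $\sum_k|\delta W_k|\le\|W^H\|_\mu\, n^{1-\mu}$ with a \emph{fixed} $\mu<H$ and conclude $2\varepsilon\left|\sum_k h_k(\theta)\delta W_k\right|\lesssim\varepsilon^2 n^{1-\mu}$. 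But the hypothesis of the proposition is $\varepsilon^2 n^{1-H}\to0$, and $\varepsilon^2 n^{1-\mu}=\varepsilon^2 n^{1-H}\cdot n^{H-\mu}$ with $n^{H-\mu}\to\infty$ for every fixed $\mu<H$. So your bound need not vanish under the stated hypothesis: for instance, along $\varepsilon^2=n^{-(1-H)}/\log n$ one has $\varepsilon^2n^{1-H}=1/\log n\to0$ while $\varepsilon^2 n^{1-\mu}=n^{H-\mu}/\log n\to\infty$. "Taking $\mu<H$ close to $H$" cannot repair this, because $\mu$ must be chosen before the limit is taken and cannot equal $H$ (fBm paths are a.s.\ \emph{not} $H$-H\"older, and $\|W^H\|_\mu$ blows up as $\mu\uparrow H$). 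As written, you have proved the proposition only under the strictly stronger assumption that $\varepsilon^2 n^{1-\mu}\to0$ for some $\mu<H$.

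The missing idea — and the way the paper's Lemma \ref{stochastic integral1} closes precisely this estimate for $Q_{n,\varepsilon}^{\alpha,(2)}$ — is to avoid the H\"older seminorm of $W^H$ altogether and instead use the discrete H\"older inequality with conjugate exponents $\frac{1}{1-H}$ and $\frac{1}{H}$:
\begin{align*}
\sum_{k=0}^{n-1}\left|h_k(\theta)\right|\left|\delta W_k\right|
\le\left(\sum_{k=0}^{n-1}\left|h_k(\theta)\right|^{\frac{1}{1-H}}\right)^{1-H}\left(\sum_{k=0}^{n-1}\left|\delta W_k\right|^{\frac{1}{H}}\right)^{H}
\le n^{1-H}\sup_{k,\theta}\left|h_k(\theta)\right|\cdot\left(\sum_{k=0}^{n-1}\left|\delta W_k\right|^{\frac{1}{H}}\right)^{H},
\end{align*}
combined with the fact that the $\frac1H$-variation sum $\left(\sum_{k}|\delta W_k|^{1/H}\right)^{H}$ stays a.s.\ bounded (indeed convergent) as $n\to\infty$; this is the Neuenkirch--Tindel result (their Lemmas 2.5 and 2.6) that the paper invokes. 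Since $\sup_{k,\theta}|h_k(\theta)|\le 2c\rho_\varepsilon\lesssim\varepsilon$ a.s., this gives $2\varepsilon\sum_k|h_k(\theta)||\delta W_k|\lesssim\varepsilon^2n^{1-H}\to0$ with the exponent exactly $1-H$ rather than $1-\mu$, which is what the hypothesis is calibrated to. With this single substitution your argument goes through; everything else in your write-up stands.
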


\begin{lem}
Under a assumption (A3), for any $\theta \in \Theta$ and $0 \leq t \leq T$, we have
\begin{align*}
\underset{\substack{\theta \in \Theta \\ 0 \leq t \leq T }}{\sup} \left| X_t^{\theta, \varepsilon} - x_t^{\theta, \varepsilon} \right| \leq \varepsilon e^{cT} \underset{0 \leq t \leq T}{\sup} \left|W_t^H \right| .
\end{align*}
In particular, 
\begin{align*}
\underset{\substack{\theta \in \Theta \\ 0 \leq t \leq T }}{\sup}  \left| X_t^{\theta, \varepsilon} - x_t^{\theta, \varepsilon} \right| \rightarrow 0 \quad(\varepsilon \downarrow 0).
\end{align*}
\end{lem}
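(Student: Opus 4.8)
The plan is to realize $X_t^{\theta,\varepsilon}-x_t^{\theta}$ (where $x_t^{\theta}$ is the ODE solution) as the solution of an integral equation and then close it with Gronwall's inequality, keeping $\theta$ fixed until the very last step. Subtracting the ODE from the integral form of \eqref{sde1} makes the initial condition $x_0$ cancel, so that for each fixed $\theta\in\Theta$,
\[
X_t^{\theta,\varepsilon}-x_t^{\theta}=\int_0^t\left(b(X_s^{\theta,\varepsilon},\theta)-b(x_s^{\theta},\theta)\right)ds+\varepsilon W_t^H.
\]
Taking absolute values and invoking the first (uniform-in-$\theta$) Lipschitz bound in (A3), $\sup_{\theta\in\Theta}|b(x,\theta)-b(y,\theta)|\leq c|x-y|$, I would arrive at
\[
\left|X_t^{\theta,\varepsilon}-x_t^{\theta}\right|\leq c\int_0^t\left|X_s^{\theta,\varepsilon}-x_s^{\theta}\right|ds+\varepsilon\sup_{0\leq u\leq T}\left|W_u^H\right|.
\]
The key observation is that the inhomogeneous term $\varepsilon\sup_{0\leq u\leq T}|W_u^H|$ is constant in both $t$ and $\theta$, which puts the inequality into exactly the shape Gronwall's lemma requires.

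Next I would apply the integral form of Gronwall's inequality to the function $t\mapsto|X_t^{\theta,\varepsilon}-x_t^{\theta}|$, which yields $|X_t^{\theta,\varepsilon}-x_t^{\theta}|\leq \varepsilon e^{ct}\sup_{0\leq u\leq T}|W_u^H|\leq\varepsilon e^{cT}\sup_{0\leq u\leq T}|W_u^H|$ for every $t\in[0,T]$. Because the right-hand side depends neither on $\theta$ nor on $t$, I can then take the supremum over $\theta\in\Theta$ and $0\leq t\leq T$ on the left for free, obtaining the claimed uniform bound.

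For the ``in particular'' statement, I would use that the fractional Brownian motion has almost surely continuous sample paths, so $\sup_{0\leq u\leq T}|W_u^H|$ is almost surely finite on the compact interval $[0,T]$. Consequently, letting $\varepsilon\downarrow 0$ sends the right-hand side of the bound to $0$ almost surely, which gives the asserted convergence.

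I do not expect any genuine obstacle here, as the argument is a standard Gronwall estimate. The only points demanding a little care are to hold $\theta$ fixed while applying Gronwall so that the constant $c$ from (A3) is genuinely $\theta$-independent, and to note that the driving term is already uniform in $\theta$ and $t$ so that the final supremum may be taken without further work.
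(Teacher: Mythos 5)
Your proposal is correct and follows essentially the same route as the paper: subtract the ODE from the integral form of the SDE, apply the uniform-in-$\theta$ Lipschitz bound from (A3), invoke Gronwall's inequality with the constant driving term $\varepsilon\sup_{0\leq u\leq T}|W_u^H|$, and take suprema since the bound is independent of $\theta$ and $t$. If anything, your write-up is slightly more careful than the paper's, as it makes explicit the almost-sure finiteness of $\sup_{0\leq u\leq T}|W_u^H|$ needed for the final convergence claim.
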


\begin{proof}
For any $\theta \in \Theta$ and $0 \leq t \leq T$, we have
\begin{align*}
\left| X_t^{\theta, \varepsilon}-x_t^{\theta, \varepsilon} \right| &\leq \int_0^t \left| b\left( X_s^{\theta, \varepsilon}, \theta \right) - b\left( x_s^{\theta, \varepsilon}, \theta \right) \right| ds+ \varepsilon \left| W_t^H \right|\\
&\leq c\int_0^t \left| X_s^{\theta, \varepsilon} -x_s^{\varepsilon} \right| ds + \varepsilon \underset{0 \leq t \leq T}{\sup} \left| W_t^H \right|.
\end{align*}
By Gronwall's inequality, it follows that 
\begin{align*}
\underset{ \theta \in \Theta}{\sup} \left| X_t^{\theta, \varepsilon}-x_t^{\theta, \varepsilon} \right| \leq \varepsilon e^{ct}  \underset{0 \leq t \leq T}{\sup} \left| W_t^H \right|.
\end{align*}
Thus we obtain the first estimate and this proof is completed.
\end{proof}

\begin{lem}
Under assumptions (A1) and (A3), we have
\label{1}
\begin{align}
\label{Q}
Q_{n,\varepsilon}(\theta)-Q_{n,\varepsilon}(\theta_0)=Q_{n,\varepsilon}^{(1)}(\theta)-2Q_{n,\varepsilon}^{(2)}(\theta)+R_{n, \varepsilon}(\theta),
\end{align}
where
\begin{align*}
Q_{n,\varepsilon}^{(1)}(\theta)&:=\frac{1}{n}\sum_{k=0}^{n-1}\left| b(X_{t_k}^{\theta_0, \varepsilon}, \theta) - b(X_{t_k}^{\theta_0, \varepsilon}, \theta_0)\right|^2,\\
Q_{n,\varepsilon}^{(2)}(\theta)&:=\sum_{k=0}^{n-1}\left( b(X_{t_k}^{\theta_0, \varepsilon}, \theta) - b(X_{t_k}^{\theta_0, \varepsilon}, \theta_0) \right) \left( \varepsilon \left(W_{t_{k+1}}^H- W_{t_k}^H \right) \right),\\
R_{n, \varepsilon}(\theta)&:=-2\sum_{k=0}^{n-1}\left( b(X_{t_k}^{\theta_0, \varepsilon}, \theta) - b(X_{t_k}^{\theta_0, \varepsilon}, \theta_0) \right) \int_{t_k}^{t_{k+1}} \left( b(X_s^{\theta_0 ,\varepsilon},\theta_0)-  b(X_{t_k}^{\theta_0, \varepsilon},\theta_0 ) \right) ds.
\end{align*}
In particular,  we have
\begin{align*}
\underset{\theta  \in \Theta}{\sup} \left| R_{n,\varepsilon}(\theta) \right| \rightarrow 0 \quad \text{a.s.} \quad(n\rightarrow \infty, \varepsilon \downarrow 0).
\end{align*}
\end{lem}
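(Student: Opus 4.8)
The plan is to establish the two claims in turn: first the algebraic identity \eqref{Q}, then the vanishing of the remainder $R_{n,\varepsilon}$.

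For the decomposition, I would start from \eqref{sde1} to write the increment as $\delta X_{t_kt_{k+1}}^{\theta_0,\varepsilon}=\int_{t_k}^{t_{k+1}}b(X_s^{\theta_0,\varepsilon},\theta_0)\,ds+\varepsilon(W_{t_{k+1}}^H-W_{t_k}^H)$, and use $t_{k+1}-t_k=1/n$ to rewrite $\tfrac1n b(X_{t_k}^{\theta_0,\varepsilon},\theta)=\int_{t_k}^{t_{k+1}}b(X_{t_k}^{\theta_0,\varepsilon},\theta)\,ds$. Then each summand of $Q_{n,\varepsilon}(\theta)$ takes the form $A_k+B_k(\theta)+C_k$, where $A_k:=\int_{t_k}^{t_{k+1}}[b(X_s^{\theta_0,\varepsilon},\theta_0)-b(X_{t_k}^{\theta_0,\varepsilon},\theta_0)]\,ds$, $B_k(\theta):=\tfrac1n[b(X_{t_k}^{\theta_0,\varepsilon},\theta_0)-b(X_{t_k}^{\theta_0,\varepsilon},\theta)]$, and $C_k:=\varepsilon(W_{t_{k+1}}^H-W_{t_k}^H)$. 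Since $B_k(\theta_0)=0$, applying $|u+v|^2-|u|^2=2\langle u,v\rangle+|v|^2$ with $u=A_k+C_k$, $v=B_k(\theta)$ and multiplying by $n$ produces $Q_{n,\varepsilon}^{(1)}(\theta)$ out of the $n|B_k(\theta)|^2$ terms, $-2Q_{n,\varepsilon}^{(2)}(\theta)$ out of the $C_k$-cross terms, and exactly $R_{n,\varepsilon}(\theta)$ out of the $A_k$-cross terms. This part is routine bookkeeping and requires no analysis.

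For the remainder, I would bound $\sup_{\theta\in\Theta}|R_{n,\varepsilon}(\theta)|$ term by term. The third line of (A3) together with the boundedness of $\Theta$ in (A1) bounds the first factor $\sup_\theta|b(X_{t_k}^{\theta_0,\varepsilon},\theta)-b(X_{t_k}^{\theta_0,\varepsilon},\theta_0)|$ by $C(1+|X_{t_k}^{\theta_0,\varepsilon}|^N)$, while the first line of (A3) bounds the integrand in $A_k$ by $c|X_s^{\theta_0,\varepsilon}-X_{t_k}^{\theta_0,\varepsilon}|$. The crucial estimate is for the path increment: from \eqref{sde1} and the linear growth bound (last line of (A3)) one has, for $s\in[t_k,t_{k+1}]$,
\[
|X_s^{\theta_0,\varepsilon}-X_{t_k}^{\theta_0,\varepsilon}|\le \frac{c(1+M)}{n}+\varepsilon\,|W_s^H-W_{t_k}^H|,
\]
where $M:=\sup_{0\le t\le T}|X_t^{\theta_0,\varepsilon}|$ is a.s. finite, and a.s. bounded as $\varepsilon\downarrow0$, by the preceding lemma combined with boundedness of the ODE solution $x^{\theta_0}$. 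Using the a.s. $\gamma$-H\"older continuity of $W^H$ for any $\gamma<H$, the noise increment is at most $\|W^H\|_{\gamma}\,n^{-\gamma}$. Combining these, each summand is uniformly $O(n^{-2})+O(\varepsilon n^{-1-\gamma})$, and summing the $n$ terms gives
\[
\sup_{\theta\in\Theta}|R_{n,\varepsilon}(\theta)|\le C(1+M^N)\Big(\tfrac{c(1+M)}{n}+\varepsilon\|W^H\|_{\gamma}\,n^{-\gamma}\Big)\longrightarrow 0 \quad\text{a.s.}
\]
as $n\to\infty,\ \varepsilon\downarrow0$, since each term vanishes separately.

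I expect the main obstacle to be the increment estimate for $X_s^{\theta_0,\varepsilon}-X_{t_k}^{\theta_0,\varepsilon}$: one must cleanly separate the deterministic drift contribution, which yields an $n^{-1}$ factor per subinterval once the a.s. uniform boundedness of $M$ is in hand, from the fractional noise contribution, where the H\"older regularity of $W^H$ is essential and where the Hurst index genuinely enters. Care is needed to verify that $M$ stays bounded as $\varepsilon\downarrow0$ so that the growth-type prefactors $C(1+M^N)$ do not blow up, which follows from the preceding lemma; the rest is a standard summation and limit. Notably, unlike the companion estimates for $Q_{n,\varepsilon}^{(2)}$, this remainder involves only ordinary Lebesgue integrals, so Lemma \ref{Young} is not needed here, and no rate condition linking $\varepsilon$ and $n$ is required for $R_{n,\varepsilon}$ to vanish.
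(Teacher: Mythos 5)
Your proposal is correct and follows essentially the same route as the paper: the same algebraic expansion of the squared residuals (the paper expands $\bigl|\delta X - \tfrac1n b(\theta)\bigr|^2 - \bigl|\delta X - \tfrac1n b(\theta_0)\bigr|^2$ directly, which is your $|u+v|^2-|u|^2$ identity in different bookkeeping), and the same remainder estimate combining the polynomial bound on $\sup_\theta|b(x,\theta)-b(x,\theta_0)|$ from (A1) and (A3), the increment bound $|X_s^{\theta_0,\varepsilon}-X_{t_k}^{\theta_0,\varepsilon}|\lesssim n^{-1}(1+M)+\varepsilon\|W^H\|_\gamma n^{-\gamma}$ via linear growth, the Gronwall lemma, and the a.s.\ H\"older continuity of $W^H$. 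Your closing observations—that neither Lemma \ref{Young} nor any rate condition linking $\varepsilon$ and $n$ is needed for $R_{n,\varepsilon}$—also match the paper's proof.
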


\begin{proof}
(\ref{Q}) is follows that
\begin{align*}
&\frac{1}{n} \left(Q_{n,\varepsilon}^{\alpha}(\theta)-Q_{n,\varepsilon}^{\alpha}(\theta_0)\right)\\
&=\sum_{k=0}^{n-1} \left\{ \left| \delta X_{t_k t_{k+1}}^{\theta_0, \varepsilon}-\frac{1}{n} b( X_{t_k}^{\theta_0, \varepsilon}, \theta)\right|^2- \left| \delta X_{t_k t_{k+1}}^{\theta_0, \varepsilon}-\frac{1}{n} b( X_{t_k}^{\theta_0, \varepsilon}, \theta_0)\right|^2 \right\} \\
&=-2\frac{1}{n} \sum_{k=0}^{n-1}\left( b(X_{t_k}^{\theta_0, \varepsilon}, \theta) - b(X_{t_k}^{\theta_0, \varepsilon}, \theta_0) \right)\delta X_{t_k t_{k+1}}^{\theta_0, \varepsilon}+\left(\frac{1}{n}\right)^2 \sum_{k=0}^{n-1}\left( b(X_{t_k}^{\theta_0, \varepsilon}, \theta)^2 - b(X_{t_k}^{\theta_0, \varepsilon}, \theta_0)^2 \right)\\
&=-2\frac{1}{n} \sum_{k=0}^{n-1}\left( b(X_{t_k}^{\theta_0, \varepsilon}, \theta) - b(X_{t_k}^{\theta_0, \varepsilon}, \theta_0) \right)\left( \delta X_{t_k t_{k+1}}^{\theta_0, \varepsilon}-\frac{1}{n} b(X_{t_k}^{\theta_0, \varepsilon}, \theta_0) \right)+\left(\frac{1}{n}\right)^2 \sum_{k=0}^{n-1}\left( b(X_{t_k}^{\theta_0, \varepsilon}, \theta) - b(X_{t_k}^{\theta_0, \varepsilon}, \theta_0) \right)^2\\
&=\frac{1}{n} \left(Q_{n,\varepsilon}^{\alpha,(1)}(\theta)+Q_{n,\varepsilon}^{\alpha,(2)}(\theta)+R_{n+\varepsilon}(\theta) \right).
\end{align*}
Since  the path of the fractional Brownian motion $(W_t^H)_{t\in[0,T]}$ is $\nu$-H\"{o}lder continuous for any $\nu \in (0,H)$, we obtain
\begin{align}
\label{ine}
\left| X_t^{\theta_0,\varepsilon} -X_s^{\theta_0, \varepsilon} \right| &\leq \int_s^t \left| b\left( X_u^{\theta_0, \varepsilon}, \theta_0 \right) \right| du + \varepsilon \left( W_t^H- W_s^H \right) \nonumber\\
&\leq \left(t-s \right) \left\{1+  \left( \varepsilon e^{cT}\underset{0 \leq t \leq T}{\sup}\left| W_t^H \right| +\underset{0\leq t \leq T}{\sup} \left| x_t^{\theta_0} \right|  \right)^N \right\} + \varepsilon \left( t-s \right)^{\lambda} ,
\end{align}
for any $\lambda \in (0,H)$. From (\ref{ine}), we have
\begin{align*}
&\underset{\theta \in \Theta }{\sup}\left| R_{n, \varepsilon}(\theta) \right| \\
&=\underset{\theta \in \Theta}{\sup} \left|  2 \sum_{k=0}^{n-1} \left( b\left(X_{t_k}^{\varepsilon,\theta_0}, \theta \right)- b\left( X_{t_k}^{\theta_0, \varepsilon}, \theta_0 \right) \right) \left( \int_{t_k}^{t_{k+1}} b\left( X_s^{\theta_0, \varepsilon} ,\theta_0 \right)- b\left(X_{t_k}^{\theta_0, \varepsilon}, \theta_0  \right)ds  \right)     \right|\\
&\lesssim  \left\{ 1+ \left( \varepsilon e^{cT}\underset{0\leq t \leq T}{\sup}\left| W_t^H \right| + \underset{0 \leq t \leq T}{\sup} \left| x_t^{\theta_0} \right| \right)^N   \right\}\\
&\times  \sum_{k=0}^{n-1} \left[ \int_{t_k}^{t_{k+1}}  \left(t_{k+1}-t_k \right) \left\{1+  \left( \varepsilon e^{cT}\underset{0 \leq t \leq T}{\sup}\left| W_t^H \right| +\underset{0\leq t \leq T}{\sup} \left| x_t^{\theta_0} \right| \right)^N \right\} + \varepsilon \left( t_{k+1}-t_k \right)^{\lambda}ds \right]\\
&=  \left\{ 1+ \left( \varepsilon e^{cT}\underset{0\leq t \leq T}{\sup}\left| W_t^H \right| + \underset{0 \leq t \leq T}{\sup} \left| x_t^{\theta_0} \right| \right)^N   \right\} \left[ \frac{1}{n}  \left\{1+  \left( \varepsilon e^{cT}\underset{0 \leq t \leq T}{\sup}\left| W_t^H \right| +\underset{0\leq t \leq T}{\sup} \left| x_t^{\theta_0} \right| \right)^N \right\} + \varepsilon \left(\frac{1}{n}\right)^{\lambda} \right]\\
&\rightarrow 0 \quad( n \rightarrow \infty, \varepsilon \downarrow 0).
\end{align*}
\end{proof}

\begin{lem}
\label{riemann}
Let $H \in (0,1)$ and $f \in C_{\uparrow}^{1,1}(\mathbb{R}, \Theta)$. Then we have
\begin{align*}
\underset{\theta \in \Theta}{\sup}\left|\frac{1}{n}\sum_{k=0}^{n-1}f(X_{t_k}^{\theta_0, \varepsilon}, \theta) - \int_0^T f(x_s^{\theta_0},\theta) \right| \rightarrow 0  \quad \text{a.s.} \quad(n\rightarrow \infty, \varepsilon \downarrow 0).
\end{align*}
In particular, under assumptions (A1) and (A3), we obtain
\begin{align*}
\underset{\theta \in \Theta }{\sup} \left| Q_{n,\varepsilon}^{\alpha,(1)}(\theta) - \int_0^T \left| b(x_s^{\theta_0}, \theta)- b(x_s^{\theta_0}, \theta_0)\right|^2 ds\right| \rightarrow 0  \quad \text{a.s.}  \quad(n\rightarrow \infty , \varepsilon \downarrow 0).
\end{align*}
\end{lem}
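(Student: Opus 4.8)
The plan is to split the difference by the triangle inequality into a ``stochastic'' part, in which the process $X^{\theta_0,\varepsilon}$ is replaced by the deterministic ODE path $x^{\theta_0}$, and a ``deterministic'' part, which is an ordinary Riemann-sum error. Concretely, I would write
\begin{align*}
\frac{1}{n}\sum_{k=0}^{n-1} f(X_{t_k}^{\theta_0,\varepsilon},\theta) - \int_0^T f(x_s^{\theta_0},\theta)\,ds
&= \frac{1}{n}\sum_{k=0}^{n-1}\bigl[f(X_{t_k}^{\theta_0,\varepsilon},\theta)-f(x_{t_k}^{\theta_0},\theta)\bigr]\\
&\quad+\Bigl[\frac{1}{n}\sum_{k=0}^{n-1}f(x_{t_k}^{\theta_0},\theta)-\int_0^T f(x_s^{\theta_0},\theta)\,ds\Bigr]
=:\mathrm{(I)}+\mathrm{(II)}.
\end{align*}
The first term will be shown to vanish as $\varepsilon\downarrow0$ (uniformly in $n$) using the previous lemma, and the second, being deterministic, to vanish as $n\to\infty$ (uniformly in $\varepsilon$); since the two limits decouple, the joint limit follows.

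For $\mathrm{(I)}$, I would first fix a realization for which $\|W^H\|_\infty:=\sup_{0\le t\le T}|W_t^H|<\infty$ (an a.s.\ event). By the preceding lemma $\sup_{\theta,t}|X_t^{\theta,\varepsilon}-x_t^{\theta}|\le \varepsilon e^{cT}\|W^H\|_\infty$, and since $\sup_t|x_t^{\theta_0}|=:M_0<\infty$, for $\varepsilon\le1$ all arguments $X_{t_k}^{\theta_0,\varepsilon}$ and $x_{t_k}^{\theta_0}$ lie in a fixed compact interval $K$. The mean value theorem in $x$ then gives $|f(X_{t_k}^{\theta_0,\varepsilon},\theta)-f(x_{t_k}^{\theta_0},\theta)|\le \bigl(\sup_{x\in K,\theta\in\Theta}|\partial_x f(x,\theta)|\bigr)\,|X_{t_k}^{\theta_0,\varepsilon}-x_{t_k}^{\theta_0}|$, and the crux is that this supremum is finite. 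Here the definition of $C_\uparrow^{1,1}$ and (A1) enter: fixing $\theta^\ast\in\Theta$ and using that $\Theta$ is convex,
\begin{align*}
\partial_x f(x,\theta)=\partial_x f(x,\theta^\ast)+\sum_{i=1}^d(\theta_i-\theta_i^\ast)\int_0^1 \partial_x\partial_{\theta_i}f\bigl(x,\theta^\ast+u(\theta-\theta^\ast)\bigr)\,du,
\end{align*}
so that $\sup_{\theta\in\Theta}|\partial_x f(x,\theta)|\le |\partial_x f(x,\theta^\ast)|+\mathrm{diam}(\Theta)\sum_i C(1+|x|^\lambda)$, a continuous function of $x$ and hence bounded on $K$. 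Consequently $\sup_\theta|\mathrm{(I)}|\lesssim \varepsilon e^{cT}\|W^H\|_\infty\to0$ as $\varepsilon\downarrow0$, with the implied constant independent of $n$.

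For $\mathrm{(II)}$, everything is deterministic. The map $s\mapsto f(x_s^{\theta_0},\theta)$ is $C^1$ with derivative $\partial_x f(x_s^{\theta_0},\theta)\,b(x_s^{\theta_0},\theta_0)$; since $x_s^{\theta_0}$ ranges over the compact set $\{x_s^{\theta_0}:0\le s\le T\}$, the bound on $\sup_\theta|\partial_x f|$ from the previous step together with $|b(x_s^{\theta_0},\theta_0)|\le c(1+M_0)$ from (A3) shows this integrand is Lipschitz in $s$ with a constant $L$ independent of $\theta$. The standard Riemann-sum estimate then yields $\sup_\theta|\mathrm{(II)}|\le \sum_{k=0}^{n-1}\int_{t_k}^{t_{k+1}}|f(x_{t_k}^{\theta_0},\theta)-f(x_s^{\theta_0},\theta)|\,ds\lesssim L/n\to0$, uniformly in $\theta$ (and in $\varepsilon$). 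Combining the two estimates proves the first assertion.

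Finally, for the ``in particular'' claim I would apply the displayed result to $f(x,\theta):=|b(x,\theta)-b(x,\theta_0)|^2$, which turns the Riemann sum into $Q_{n,\varepsilon}^{(1)}(\theta)$ and its limit into the stated integral. It then remains only to check $f\in C_\uparrow^{1,1}$, which follows from (A3)--(A4): differentiating, $\partial_x\partial_{\theta_i}f=2\partial_x b(x,\theta)\,\partial_{\theta_i}b(x,\theta)+2(b(x,\theta)-b(x,\theta_0))\,\partial_x\partial_{\theta_i}b(x,\theta)$, and each factor ($\partial_x b$ Lipschitz hence of linear growth, $\partial_{\theta_i}b$ of growth $1+|x|^N$, $b$ of linear growth, and $\partial_x\partial_{\theta_i}b$ of growth $1+|x|^\lambda$ since $b\in C_\uparrow^{2,3}$) is of polynomial growth, so the product is too. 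I expect the main obstacle to be precisely the uniform-in-$\theta$ bound on $\partial_x f$ used in term $\mathrm{(I)}$: the hypothesis in $C_\uparrow^{1,1}$ controls only the \emph{mixed} derivative $\partial_x\partial_{\theta_i}f$, so the convexity of $\Theta$ and the base-point integration above are what transfer this into a bound on $\partial_x f$ itself.
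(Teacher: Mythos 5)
Your proof is correct, but it takes a genuinely different route from the paper: the paper offers no argument at all for this lemma, disposing of it with the single line ``For proof, see Hu \textit{et al.}, Lemma 3.3'' (the reference actually being Long, Shimizu and Sun (2013), the analogous statement for SDEs driven by small L\'evy noise), so a reader must verify that the cited argument transfers to the fractional setting. Your self-contained proof does exactly that work: the split into the substitution error (I) and the deterministic Riemann-sum error (II), the pathwise Gronwall bound $\sup_{\theta,t}|X_t^{\theta,\varepsilon}-x_t^{\theta}|\le \varepsilon e^{cT}\sup_{t}|W_t^H|$ from the preceding lemma to control (I), and the observation that the bound on (I) is uniform in $n$ while the bound on (II) is independent of $\varepsilon$, so the joint limit $(n\to\infty,\ \varepsilon\downarrow 0)$ decouples. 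What your version buys, beyond being checkable within this paper, is that it isolates the real crux: the class $C_{\uparrow}^{1,1}$ only controls the mixed derivative $\partial_x\partial_{\theta_i}f$, and your base-point identity $\partial_x f(x,\theta)=\partial_x f(x,\theta^\ast)+\sum_i(\theta_i-\theta_i^\ast)\int_0^1\partial_x\partial_{\theta_i}f\bigl(x,\theta^\ast+u(\theta-\theta^\ast)\bigr)\,du$ is precisely where the convexity and boundedness of $\Theta$ in (A1) are consumed --- a point the bare citation hides. Two blemishes, neither fatal. First, your product rule drops a term: $\partial_x\partial_{\theta_i}f$ should contain $2\bigl(\partial_x b(x,\theta)-\partial_x b(x,\theta_0)\bigr)\partial_{\theta_i}b(x,\theta)$ rather than $2\,\partial_x b(x,\theta)\,\partial_{\theta_i}b(x,\theta)$; the omitted piece has the same polynomial growth, so the conclusion stands. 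Second, you invoke (A4) to place $f=|b(\cdot,\theta)-b(\cdot,\theta_0)|^2$ in $C_{\uparrow}^{1,1}$, whereas the lemma asserts the ``in particular'' under (A1) and (A3) alone; this is really a looseness in the paper's own statement (nothing in (A1)+(A3) makes $b$ differentiable in $\theta$), and if you wish to honor it you can bypass $C_{\uparrow}^{1,1}$ for this specific $f$: factor the difference of squares, bound one factor by the uniform Lipschitz property and the other by linear growth from (A3), and run your (I)+(II) scheme directly, with no $\theta$-derivatives needed.
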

\begin{proof}
For proof, see Hu \textit{et al.} \cite{Shimizu}, Lemma 3.3.
\end{proof}

\begin{lem}
\label{stochastic integral1}
Let $H \in (0,1)$ and $f \in C_{\uparrow}^{1,1}(\mathbb{R}, \Theta)$ such that $\underset{\theta \in \Theta}{\sup} \left| f(x,\theta)- f(y,\theta)\right| \leq c\left| x-y \right|$. Suppose that $ \varepsilon n^{\left( 1-H \right)} \rightarrow 0 \quad(n \rightarrow \infty, \varepsilon \downarrow 0)$ holds, then we have
\begin{align*}
\underset{\theta \in \Theta}{\sup}  \left|  \sum_{k=0}^{n-1} f(X_{t_k}^{\theta_0,\varepsilon},\theta)(W_{t_{k+1}}^H-W_{t_k}^H) - \int_0^T f(x_s^{\theta_0},\theta) dW_s^H  \right| \rightarrow 0  \quad \text{a.s.} \quad(n\rightarrow \infty, \varepsilon \downarrow 0).
\end{align*}

In addition, (A3) and $ \varepsilon^2 n^{\left( 1-H \right)} \rightarrow 0 \quad(n \rightarrow \infty, \varepsilon \downarrow 0)$ hold , then we obtain

\begin{align*}
\underset{\theta \in \Theta}{\sup} \left| Q_{n,\varepsilon}^{\alpha,(2)}(\theta) \right| \rightarrow 0  \quad \text{a.s.}  \quad(n\rightarrow \infty, \varepsilon \downarrow 0).
\end{align*}
\end{lem}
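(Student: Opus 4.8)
The plan is to prove the two convergences in Lemma~\ref{stochastic integral1} separately, using the Young inequality estimate of Lemma~\ref{Young} as the workhorse in each. For the first assertion, I would decompose the difference between the Riemann--Stieltjes sum and the Young integral as
\begin{align*}
&\sum_{k=0}^{n-1} f(X_{t_k}^{\theta_0,\varepsilon},\theta)(W_{t_{k+1}}^H-W_{t_k}^H)-\int_0^T f(x_s^{\theta_0},\theta)\,dW_s^H\\
&=\sum_{k=0}^{n-1}\int_{t_k}^{t_{k+1}}\bigl(f(X_{t_k}^{\theta_0,\varepsilon},\theta)-f(x_s^{\theta_0},\theta)\bigr)\,dW_s^H,
\end{align*}
which holds because the piecewise-constant sum equals $\sum_k \int_{t_k}^{t_{k+1}} f(X_{t_k}^{\theta_0,\varepsilon},\theta)\,dW_s^H$. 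On each subinterval I would apply Lemma~\ref{Young} with $g=W^H$ (which is $\nu$-H\"older for any $\nu<H$) against the increment of $f(X^{\theta_0,\varepsilon}_\cdot,\theta)-f(x^{\theta_0}_\cdot,\theta)$, controlling the H\"older norm of the integrand via the Lipschitz bound on $f$, the bound $|X_t^{\theta_0,\varepsilon}-x_t^{\theta_0}|\le \varepsilon e^{cT}\sup|W^H|$ from the earlier Lemma, and the path regularity estimate~(\ref{ine}). Summing over $k$ and tracking the resulting powers of $1/n$ and $\varepsilon$ should produce a bound of the form $(\text{const})\cdot n\cdot (\varepsilon+n^{-\lambda})\cdot n^{-(\lambda+\nu)}$, uniform in $\theta$, and the hypothesis $\varepsilon n^{1-H}\to 0$ (choosing $\lambda,\nu$ close enough to $H$) forces this to vanish almost surely.

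For the second assertion I would observe that $Q_{n,\varepsilon}^{\alpha,(2)}(\theta)$ is, up to the $\varepsilon$ factor, exactly the Riemann--Stieltjes sum against $W^H$ with integrand $f(x,\theta):=b(x,\theta)-b(x,\theta_0)$, which is Lipschitz in $x$ uniformly in $\theta$ by (A3) and lies in $C^{1,1}_\uparrow$. Thus
\begin{align*}
\sup_{\theta\in\Theta}|Q_{n,\varepsilon}^{\alpha,(2)}(\theta)|
=\varepsilon\,\sup_{\theta\in\Theta}\Bigl|\sum_{k=0}^{n-1} f(X_{t_k}^{\theta_0,\varepsilon},\theta)(W_{t_{k+1}}^H-W_{t_k}^H)\Bigr|.
\end{align*}
I would bound the bracketed sum by adding and subtracting the Young integral: the first assertion (applicable since $\varepsilon^2 n^{1-H}\to 0$ implies $\varepsilon n^{1-H}\to 0$) shows the difference tends to $0$, and the integral $\int_0^T f(x_s^{\theta_0},\theta)\,dW_s^H$ is an almost surely finite quantity bounded uniformly in $\theta$ by Lemma~\ref{Young} together with the continuity of $\theta\mapsto f(\cdot,\theta)$ on the bounded set $\Theta$. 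Hence the bracketed supremum is $O(1)$ almost surely, and multiplying by $\varepsilon\downarrow 0$ gives the claim. (One can alternatively bound the sum directly by $\sum_k \int_{t_k}^{t_{k+1}} f\,dW^H$ and use Lemma~\ref{Young} on each piece, but routing through the first assertion is cleaner.)

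The main obstacle I anticipate is making the Young-inequality estimate on each subinterval genuinely uniform in $\theta$ while keeping the exponent bookkeeping sharp enough that $\varepsilon n^{1-H}\to0$ suffices. The H\"older norm $\|f(X^{\theta_0,\varepsilon}_\cdot,\theta)-f(x^{\theta_0}_\cdot,\theta)\|_{\lambda;[t_k,t_{k+1}]}$ must be controlled uniformly over $\theta$, which requires the $C^{1,1}_\uparrow$ hypothesis (to dominate $\partial_\theta$-derivatives by polynomial-growth factors in the paths) and a careful choice of the two H\"older exponents subject only to $\lambda+\nu>1$ with both below $H$ --- this is the delicate point when $H\le 1/2$, since $\lambda+\nu>1$ then forces exponents strictly between $1-H$ and $H$, and one must verify that the associated $\varepsilon$-power, not merely the $n$-power, is killed by the stated rate condition. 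Once the per-interval estimate is uniform and the exponents are pinned down, summation and passage to the limit are routine.
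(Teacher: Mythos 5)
Your decomposition $\sum_{k}\int_{t_k}^{t_{k+1}}\bigl(f(X_{t_k}^{\theta_0,\varepsilon},\theta)-f(x_s^{\theta_0},\theta)\bigr)\,dW_s^H$ is a valid identity, but the step where you apply Lemma~\ref{Young} to each summand fails, and it fails exactly where the hypothesis $\varepsilon n^{1-H}\to0$ is needed. Lemma~\ref{Young} bounds only $\int_a^b(\phi(s)-\phi(a))\,dg(s)$, the integral of the \emph{increment} of the integrand; your integrand $\phi(s)=f(X_{t_k}^{\theta_0,\varepsilon},\theta)-f(x_s^{\theta_0},\theta)$ has the nonzero left-endpoint value $\phi(t_k)=f(X_{t_k}^{\theta_0,\varepsilon},\theta)-f(x_{t_k}^{\theta_0},\theta)=O(\varepsilon)$, and Young's inequality says nothing about the contribution $\sum_k\phi(t_k)\,(W_{t_{k+1}}^H-W_{t_k}^H)$. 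If you control that sum by pathwise H\"older continuity of $W^H$ you only get $O(\varepsilon n^{1-\nu})$ with $\nu<H$ \emph{strictly}, which is not killed by $\varepsilon n^{1-H}\to0$ (take $\varepsilon=n^{-(1-H)}/\log n$: then $\varepsilon n^{1-H}\to0$ but $\varepsilon n^{1-\nu}\to\infty$ for every fixed $\nu<H$). This is why the paper splits the error into two pieces, $\sum_k[f(X_{t_k}^{\theta_0,\varepsilon},\theta)-f(x_{t_k}^{\theta_0},\theta)](W_{t_{k+1}}^H-W_{t_k}^H)$ and $\sum_k\int_{t_k}^{t_{k+1}}[f(x_{t_k}^{\theta_0},\theta)-f(x_s^{\theta_0},\theta)]\,dW_s^H$, and treats the first one \emph{not} by Young's inequality but by the discrete H\"older inequality with exponents $\tfrac{1}{1-H}$ and $\tfrac1H$ combined with the a.s.\ convergence of the $1/H$-variation $\sum_k|W_{t_{k+1}}^H-W_{t_k}^H|^{1/H}$ (Neuenkirch and Tindel \cite{Nuenkirch}, Lemmas 2.5 and 2.6); that variation result is what produces the exact exponent $n^{1-H}$ rather than $n^{1-\nu}$. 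This ingredient is entirely absent from your proposal, and its absence also explains why your anticipated bound $n(\varepsilon+n^{-\lambda})n^{-(\lambda+\nu)}$ is too good to be true: since $\lambda+\nu>1$ it gives $\varepsilon n^{1-\lambda-\nu}\le\varepsilon$, i.e.\ convergence with \emph{no} rate condition linking $\varepsilon$ and $n$ at all, contradicting the fact that some condition is genuinely necessary (cf.\ the paper's simulations with $H=0.25$, $\varepsilon=0.1$). Relatedly, the "delicate point" you flag at the end — exponents "strictly between $1-H$ and $H$" when $H\le\tfrac12$ — is a real defect of your setup that you never resolve: that interval is \emph{empty} for $H<\tfrac12$. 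The paper's resolution is that Young's inequality is applied only to the term whose integrand is the \emph{deterministic} path $s\mapsto f(x_s^{\theta_0},\theta)$, which is Lipschitz in $s$; hence its exponent may be taken $\lambda\in(1-H,1)$, and only the exponent on $W^H$ must stay below $H$. No exponent below $H$ is ever required of the integrand, and no H\"older norm of the stochastic path $X^{\theta_0,\varepsilon}$ enters this lemma at all.

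For the second assertion there is a separate logical error: you invoke the first assertion on the grounds that "$\varepsilon^2n^{1-H}\to0$ implies $\varepsilon n^{1-H}\to0$", but the implication runs the other way ($\varepsilon^2\le\varepsilon$ for small $\varepsilon$, so $\varepsilon^2n^{1-H}\to0$ is the \emph{weaker} condition). The repair, which is how the paper's argument is structured, is to use the quantitative estimate established in the first part rather than its qualitative conclusion: with $f(x,\theta)=b(x,\theta)-b(x,\theta_0)$, which satisfies the Lipschitz hypothesis by (A3), one has
\begin{align*}
\underset{\theta\in\Theta}{\sup}\left|\sum_{k=0}^{n-1}f(X_{t_k}^{\theta_0,\varepsilon},\theta)\left(W_{t_{k+1}}^H-W_{t_k}^H\right)-\int_0^Tf(x_s^{\theta_0},\theta)\,dW_s^H\right|\lesssim \varepsilon n^{1-H}\left(\sum_{k=0}^{n-1}\left|W_{t_{k+1}}^H-W_{t_k}^H\right|^{1/H}\right)^{H}+\left(\frac1n\right)^{H},
\end{align*}
so that, after multiplying by the prefactor $\varepsilon$ in $Q_{n,\varepsilon}^{\alpha,(2)}$,
\begin{align*}
\underset{\theta\in\Theta}{\sup}\left|Q_{n,\varepsilon}^{\alpha,(2)}(\theta)\right|\lesssim \varepsilon^2 n^{1-H}\left(\sum_{k=0}^{n-1}\left|W_{t_{k+1}}^H-W_{t_k}^H\right|^{1/H}\right)^{H}+\varepsilon\left(\frac1n\right)^{H}+\varepsilon\,\underset{\theta\in\Theta}{\sup}\left|\int_0^Tf(x_s^{\theta_0},\theta)\,dW_s^H\right|,
\end{align*}
and every term vanishes a.s.\ under $\varepsilon^2n^{1-H}\to0$ alone. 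Your idea of exploiting the factor $\varepsilon$ together with the a.s.\ boundedness (uniform in $\theta$) of the limiting Young integral is the right one, but it must be wired to the estimate, not to the lemma's conclusion.
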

\begin{proof}
For any $\lambda \in (1-H,1), \lambda' \in (0,H)$ s.t. $\lambda+\lambda'>1$, we have
\begin{align*}
&\underset{\theta \in \Theta}{\sup}  \left|  \sum_{k=0}^{n-1} f(X_{t_k}^{\theta_0,\varepsilon},\theta)(W_{t_{k+1}}^H-W_{t_k}^H) - \int_0^T f(x_s^{\theta_0},\theta) dW_s^H  \right|\\
&\leq \underset{\theta \in \Theta}{\sup}  \left|  \sum_{k=0}^{n-1} f(X_{t_k}^{\theta_0,\varepsilon},\theta)(W_{t_{k+1}}^H-W_{t_k}^H) - \sum_{k=0}^{n-1} f(x_{t_k}^{\theta_0},\theta)(W_{t_{k+1}}^H-W_{t_k}^H)  \right| \\
&+\underset{\theta \in \Theta}{\sup}  \left|  \sum_{k=0}^{n-1} f(x_{t_k}^{\theta_0},\theta)(W_{t_{k+1}}^H-W_{t_k}^H) - \int_0^T f(x_s^{\theta_0},\theta) dW_s^H  \right| \\
&\leq   \underset{\theta \in \Theta}{\sup} \left\{ \left(\sum_{k=0}^{n-1}\left| f\left(X_{t_k}^{\theta_0,\varepsilon},\theta \right)-f\left(x_{t_k}^{\theta_0},\theta \right)\right|^{\frac{1}{1-H}} \right)^{1-H} \left(\sum_{k=0}^{n-1} \left| W_{t_{k+1}}^H-W_{t_k}^H \right|^{\frac{1}{H}} \right)^H \right\} \\
&+ \sum_{k=0}^{n-1} \underset{\theta \in \Theta}{\sup} \left| \int_{t_k}^{t_{k+1}}  f(x_{t_k}^{\theta_0},\theta)-f(x_s^{\theta_0},\theta)  dW_s^H\right| \\
&\uwave{<}  n^{1-H} \underset{0 \leq t \leq T}{\sup} \left| X_t^{\theta_0,\varepsilon}-x_t^{\theta_0} \right| \left(\sum_{k=0}^{n-1} \left| W_{t_{k+1}}^H-W_{t_k}^H \right|^{\frac{1}{H}} \right)^H+ \sum_{k=0}^{n-1} \underset{\theta \in \Theta}{\sup} \left\| f\left(x_{\cdot}^{\theta_0},\theta \right) \right\|_{\lambda; [t_k, t_{k+1}]} \left\| W_{\cdot}^{H}\right\|_{\lambda'; [t_k, t_{k+1}]} \left(\frac{1}{n}\right)^{\lambda+\lambda'}  \\
&\lesssim \varepsilon n^{1-H}\left(\sum_{k=0}^{n-1} \left| W_{t_{k+1}}^H-W_{t_k}^H \right|^{\frac{1}{H}} \right)^H +   n \left(\frac{1}{n} \right)^{1-\lambda} \left(\frac{1}{n} \right)^{H-\lambda'}\left(\frac{1}{n} \right)^{\lambda+\lambda'}\\
&=\varepsilon n^{1-H}\left(\sum_{k=0}^{n-1} \left| W_{t_{k+1}}^H-W_{t_k}^H \right|^{\frac{1}{H}} \right)^H +   \left(\frac{1}{n} \right)^H\\
&\rightarrow 0 \quad(n\rightarrow \infty, \varepsilon \downarrow 0).
\end{align*}
by Lemma \ref{Young}, Neuenkirch and Tindel \cite{Nuenkirch}, Lemma2.5 and Lemma 2.6.
\end{proof}

\begin{lem}
\label{stochastic integral2}
Let $H>\frac{1}{2}$ and $f \in C_{\uparrow}^{1,1}(\mathbb{R} \Theta)$ such taht
\begin{align*}
\underset{\theta \in \Theta}{\sup} \left|  f\left( x, \theta \right) -  f\left(y, \theta \right) \right| &\leq c \left|x-y \right|,\\
\underset{\theta \in \Theta}{\sup} \left| \partial_x f\left( x, \theta \right) - \partial_x f\left(y, \theta \right) \right| &\leq c \left|x-y \right|.
\end{align*}
Then we have 
\begin{align}
\label{lem8}
\underset{\theta \in \Theta}{\sup}\left|\sum_{k=0}^{n-1} f \left( X_{t_k}^{\theta_0,\varepsilon} , \theta \right) \left( W_{t_{k+1}}^H - W_{t_k}^H \right)  -  \int_0^T f(x_t^{\theta_0} \theta ) dW_t^H \right| \rightarrow 0 \quad \text{a.s.} \quad (n \rightarrow \infty, \varepsilon \downarrow 0).
\end{align}
In particular, under the assumptions (A1) and (A4), we obtain
\begin{align*}
\underset{\theta \in \Theta}{\sup} \left| Q_{n,\varepsilon}^{\alpha,(2)}(\theta) \right| \rightarrow 0  \quad \text{a.s.}  \quad(n\rightarrow \infty, \varepsilon \downarrow 0).
\end{align*}
\end{lem}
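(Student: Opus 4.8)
The plan is to mimic the two–term decomposition used in the proof of Lemma~\ref{stochastic integral1}, writing
\[
\sum_{k=0}^{n-1} f(X_{t_k}^{\theta_0,\varepsilon},\theta)\,(W_{t_{k+1}}^H-W_{t_k}^H)-\int_0^T f(x_s^{\theta_0},\theta)\,dW_s^H=\mathrm{(I)}+\mathrm{(II)},
\]
where $\mathrm{(I)}:=\sum_{k}(f(X_{t_k}^{\theta_0,\varepsilon},\theta)-f(x_{t_k}^{\theta_0},\theta))(W_{t_{k+1}}^H-W_{t_k}^H)$ is the error from replacing the observed path by the limiting ODE path, and $\mathrm{(II)}:=\sum_k f(x_{t_k}^{\theta_0},\theta)(W_{t_{k+1}}^H-W_{t_k}^H)-\int_0^T f(x_s^{\theta_0},\theta)\,dW_s^H$ is the discretization error of the Young integral along the deterministic path. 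The crucial departure from Lemma~\ref{stochastic integral1} is that for $H>\tfrac12$ I will \emph{not} bound $\mathrm{(I)}$ by the crude discrete Hölder inequality (which produces the factor $\varepsilon n^{1-H}$ and thereby forces the hypothesis $\varepsilon n^{1-H}\to0$); instead I will treat $\mathrm{(I)}$ with the continuous Young estimate of Lemma~\ref{Young}, exploiting the extra regularity of $f$.

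The heart of the argument is the claim that, setting $g_s:=f(X_s^{\theta_0,\varepsilon},\theta)-f(x_s^{\theta_0},\theta)$, one has the uniform Hölder bound $\sup_{\theta\in\Theta}\|g\|_{\lambda;[0,T]}\le C(\omega)\,\varepsilon$ for $\lambda\in(\tfrac12,H)$ and an almost surely finite $C(\omega)$ depending on $\sup_{0\le t\le T}|W_t^H|$ and the structural constants. To prove it I would write $Y_t:=X_t^{\theta_0,\varepsilon}-x_t^{\theta_0}=\varepsilon W_t^H+\int_0^t(b(X_s^{\theta_0,\varepsilon},\theta_0)-b(x_s^{\theta_0},\theta_0))\,ds$, so that $\sup_t|Y_t|\le\varepsilon e^{cT}\sup_t|W_t^H|$ by the a priori estimate proved earlier and the drift part of $Y$ is Lipschitz in $t$ with constant $O(\varepsilon)$. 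For $u<s$ a first–order expansion gives
\[
\begin{aligned}
g_s-g_u&=\int_0^1\partial_x f(X_u+r(X_s-X_u),\theta)\,dr\,((X_s-X_u)-(x_s-x_u))\\
&\quad+\int_0^1(\partial_x f(X_u+r(X_s-X_u),\theta)-\partial_x f(x_u+r(x_s-x_u),\theta))\,dr\,(x_s-x_u).
\end{aligned}
\]
Here $(X_s-X_u)-(x_s-x_u)=Y_s-Y_u=\varepsilon(W_s^H-W_u^H)+(V_s-V_u)$ with $|V_s-V_u|\lesssim\varepsilon|s-u|$, so $|Y_s-Y_u|\lesssim\varepsilon|s-u|^{\lambda}$; since $\partial_x f$ is bounded on the (a.s.\ bounded) range of $X$ and $x$, the first term is $O(\varepsilon|s-u|^{\lambda})$. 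For the second term I would use the assumed Lipschitz continuity of $\partial_x f$ together with $\sup_t|Y_t|=O(\varepsilon)$ and $|x_s-x_u|\lesssim|s-u|\le T^{1-\lambda}|s-u|^{\lambda}$, again getting $O(\varepsilon|s-u|^{\lambda})$. Combined with $\sup_s|g_s|\le c\sup_t|Y_t|=O(\varepsilon)$ this yields the claim, uniformly in $\theta$ because all constants are $\theta$–free. This step — and in particular the use of the Lipschitz bound on $\partial_x f$, which is exactly the extra hypothesis of this lemma — is the main obstacle.

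Granting the Hölder bound, I would conclude as follows. Since $X_0=x_0=x_0$ we have $g_0=0$, so Lemma~\ref{Young} gives $|\int_0^T g_s\,dW_s^H|\le c_{\lambda,\nu}\|g\|_{\lambda;[0,T]}\|W^H\|_{\nu;[0,T]}T^{\lambda+\nu}\lesssim\varepsilon$, while applying it on each $[t_k,t_{k+1}]$ yields
\[
\Bigl|\mathrm{(I)}-\int_0^T g_s\,dW_s^H\Bigr|\le c_{\lambda,\nu}\sum_{k=0}^{n-1}\|g\|_{\lambda;[t_k,t_{k+1}]}\|W^H\|_{\nu;[t_k,t_{k+1}]}\Bigl(\tfrac1n\Bigr)^{\lambda+\nu},
\]
with $\nu\in(1-\lambda,H)$; bounding the local Hölder norms by the global ones and summing the $n$ terms produces a factor $n^{1-\lambda-\nu}\le1$, so both pieces of $\mathrm{(I)}$ are $O(\varepsilon)$ uniformly in $\theta$. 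The point is that the increments $W_{t_{k+1}}^H-W_{t_k}^H$ are never summed crudely, which is why no relation between $\varepsilon$ and $n$ is needed. The term $\mathrm{(II)}$ is handled exactly as in Lemma~\ref{stochastic integral1}: since $x_\cdot^{\theta_0}$ is Lipschitz, $\sup_\theta\|f(x_\cdot^{\theta_0},\theta)\|_{\lambda'}<\infty$, and Lemma~\ref{Young} gives $\sup_\theta|\mathrm{(II)}|\lesssim n^{1-\lambda'-\nu}\to0$. Adding the estimates proves (\ref{lem8}).

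Finally, for the assertion on $Q_{n,\varepsilon}^{\alpha,(2)}$ I would apply (\ref{lem8}) with $f(x,\theta):=b(x,\theta)-b(x,\theta_0)$, which belongs to $C_{\uparrow}^{1,1}$ and satisfies the required Lipschitz bounds on $f$ and on $\partial_x f$ by (A1), (A3)--(A4). Recalling $Q_{n,\varepsilon}^{\alpha,(2)}(\theta)=\varepsilon\sum_k(b(X_{t_k}^{\theta_0,\varepsilon},\theta)-b(X_{t_k}^{\theta_0,\varepsilon},\theta_0))(W_{t_{k+1}}^H-W_{t_k}^H)$, (\ref{lem8}) shows the sum converges a.s., uniformly in $\theta$, to the Young integral $\int_0^T(b(x_s^{\theta_0},\theta)-b(x_s^{\theta_0},\theta_0))\,dW_s^H$, whose supremum over $\theta$ is an almost surely finite random variable by Lemma~\ref{Young}; hence $\sup_\theta|\sum_k\cdots|$ is a.s.\ bounded uniformly in $n$ and $\varepsilon$, and multiplying by the prefactor $\varepsilon\downarrow0$ gives $\sup_\theta|Q_{n,\varepsilon}^{\alpha,(2)}(\theta)|\to0$ a.s.
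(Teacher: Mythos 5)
Your proof is correct, and it reaches the conclusion by a route that differs from the paper's in two substantive ways. First, the decomposition: the paper compares the Riemann sum to the Young integral along the \emph{random} path, i.e. it splits off $\sum_k \int_{t_k}^{t_{k+1}}\{f(X_{t_k}^{\theta_0,\varepsilon},\theta)-f(X_t^{\theta_0,\varepsilon},\theta)\}\,dW_t^H$ (discretization error along $X$) plus $\int_0^T\{f(X_t^{\theta_0,\varepsilon},\theta)-f(x_t^{\theta_0},\theta)\}\,dW_t^H$, whereas you discretize along the \emph{deterministic} path $x^{\theta_0}$ and route the path-replacement error through the discrete sum $\mathrm{(I)}$ compared with $\int_0^T g_s\,dW_s^H$; both splits work because $H>\tfrac12$ lets you take $\lambda>\tfrac12$ so that per-interval Young errors of size $(1/n)^{\lambda+\nu}$ sum to at most a constant, with no $\varepsilon$--$n$ balance. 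Second, and more significantly, the crucial uniform Hölder bound on $g_t=f(X_t^{\theta_0,\varepsilon},\theta)-f(x_t^{\theta_0},\theta)$: the paper obtains $\sup_\theta\|g\|_{\lambda;[0,T]}\to 0$ by invoking the change-of-variable formula (Z\"ahle) to represent $f(X_t^{\theta_0,\varepsilon},\theta)$ and $f(x_t^{\theta_0},\theta)$ as Lebesgue-plus-Young integrals and estimating the resulting four terms, while you get the sharper statement $\sup_\theta\|g\|_{\lambda;[0,T]}\le C(\omega)\varepsilon$ directly from a mean-value expansion, using only the assumed Lipschitz bounds on $f$ and $\partial_x f$, the Gronwall estimate $\sup_t|X_t^{\theta_0,\varepsilon}-x_t^{\theta_0}|\le\varepsilon e^{cT}\sup_t|W_t^H|$, and the increment bound $|Y_s-Y_u|\lesssim\varepsilon|s-u|^\lambda$. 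Your version is more elementary (it avoids the chain-rule lemma entirely) and quantitative (an explicit $O(\varepsilon)$ rate rather than mere $o(1)$), at the cost of the slightly longer three-term comparison; you also spell out the deduction of $\sup_\theta|Q_{n,\varepsilon}^{\alpha,(2)}(\theta)|\to0$ from (\ref{lem8}) and the prefactor $\varepsilon$, a step the paper leaves implicit.
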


\begin{proof}
Let $\lambda \in (0,H)$ such that $2\lambda>1$.
\begin{align}
\label{proof}
&\underset{\theta \in \Theta}{\sup} \left| \sum_{k=0}^{n-1} f\left(X_{t_k}^{\theta_0,\varepsilon},\theta \right) \left( W_{t_{k+1}}^H - W_{t_k}^H \right)- \int_0^T f\left(x_t^{\theta_0},\theta \right) dW_t^H \right| \nonumber\\
&\leq \underset{\theta \in \Theta}{\sup} \left| \sum_{k=0}^{n-1} f\left(X_{t_k}^{\theta_0,\varepsilon},\theta \right) \left( W_{t_{k+1}}^H - W_{t_k}^H \right) -  \int_0^T f\left(X_t^{\theta_0, \varepsilon}, \theta \right) dW_t^H\right| + \underset{\theta \in \Theta}{\sup} \left|  \int_0^T f\left(X_t^{\theta_0, \varepsilon}, \theta \right) dW_t^H -  \int_0^T f\left(x_t^{\theta_0}, \theta \right) dW_t^H \right| \nonumber\\
&= \underset{\theta \in \Theta}{\sup} \left| \sum_{k=0}^{n-1} \int_{t_k}^{t_{k+1}} \left\{ f\left( X_{t_k}^{\theta_0,\varepsilon}, \theta \right)-f\left(X_t^{\theta_0 , \varepsilon}, \theta \right)\right\} dW_t^H\right|\nonumber\\
&+\underset{\theta \in \Theta}{\sup} \left| \int_0^T \left\{ f\left(X_t^{\theta_0, \varepsilon},\theta \right)- f\left( x_t^{\theta_0}, \theta \right) -\left(f\left(x_0^{\theta_0}, \theta\right) -f\left(x_0^{\theta_0}, \theta\right) \right) \right\} dW_t^H \right|\nonumber\\
&\leq \sum_{k=0}^{n-1} \underset{\theta \in \Theta}{\sup} \left\| f\left( X_{\cdot}^{\theta_0, \varepsilon}, \theta\right) \right\|_{\lambda;[t_k,t_{k+1}]} \left\|W_{\cdot}^H\right\|_{\lambda;[t_k,t_{k+1}]}\left(\frac{1}{n}\right)^{2\lambda} + \underset{\theta \in \Theta}{\sup} \left\| f\left(X_{\cdot}^{\theta_0,\varepsilon},\theta \right)- f\left(x_{\cdot}^{\theta_0}, \theta \right)\right\|_{\lambda;[0,T]}\left\| W_{\cdot}^H \right\|_{\lambda; [0,T]}T^{2\lambda}\nonumber\\
&\lesssim \sum_{k=0}^{n-1}  \left[ \left\{1+  \left(\varepsilon e^{cT}\underset{0 \leq t \leq T}{\sup}\left| W_t^H \right| +\underset{0\leq t \leq T}{\sup} \left| x_t^{\theta_0} \right| \right)^N \right\} \left(\frac{1}{n}\right)^{1-\lambda}+\varepsilon \left\|W_{\cdot}^H \right\|_{\lambda; [t_k,t_{k+1}]} \right]\left\|W_{\cdot}^H \right\|_{\lambda; [t_k,t_{k+1}]} \left(\frac{1}{n} \right)^{2\lambda}\nonumber\\
&+\underset{\theta \in \Theta}{\sup} \left\| f\left(X_{\cdot}^{\theta_0,\varepsilon},\theta \right)- f\left(x_{\cdot}^{\theta_0}, \theta \right)\right\|_{\lambda;[0,T]}\left\| W_{\cdot}^H \right\|_{\lambda; [0,T]}T^{2\lambda}\nonumber\\
&\lesssim \left[  \left\{1+  \left(\varepsilon e^{cT}\underset{0 \leq t \leq T}{\sup}\left| W_t^H \right| +\underset{0\leq t \leq T}{\sup} \left| x_t^{\theta_0} \right| \right)^N \right\} \left(\frac{1}{n}\right)^H +\varepsilon  \left(\frac{1}{n}\right)^{2H-1} \right]\nonumber\\
&+\underset{\theta \in \Theta}{\sup} \left\| f\left(X_{\cdot}^{\theta_0,\varepsilon},\theta \right)- f\left(x_{\cdot}^{\theta_0}, \theta \right)\right\|_{\lambda;[0,T]}\left\| W_{\cdot}^H \right\|_{\lambda; [0,T]}T^{2\lambda}.
\end{align}
Note that 
\begin{align*}
f\left(X_t^{\theta_0,\varepsilon}, \theta \right)-f\left(x_0, \theta \right)&=\int_0^t \partial_x f\left(X_s^{\theta_0,\varepsilon},\theta \right) dX_s^{\theta_0, \varepsilon}\\
&=\int_0^t \partial_x f\left(X_s^{\theta_0,\varepsilon},\theta \right) b\left(X_s^{\theta_0, \varepsilon},\theta_0\right) ds+\varepsilon \int_0^t \partial_x f\left(X_s^{\theta_0,\varepsilon},\theta \right) dW_s^H,
\end{align*}
and
\begin{align*}
f\left(x_t^{\theta_0}, \theta \right)-f\left(x_0, \theta \right)=\int_0^t \partial_x f\left(x_s^{\theta_0},\theta \right) b\left(x_s^{\theta_0},\theta_0\right) ds,
\end{align*}
hold from Lemma \ref{Young}. Thus, we have
\begin{align*}
&\left| f \left( X_t^{\theta_0, \varepsilon} , \theta \right) - f \left( x_t^{\theta_0}, \theta \right) - \left( f \left( X_s^{\theta_0, \varepsilon},\theta \right) - f\left( x_s^{\theta_0} ,\theta \right) \right) \right|\\
&\leq \left| \int_s^t \left\{ \partial_x f\left( X_u^{\theta_0, \varepsilon}, \theta \right) b\left( X_u^{\theta_0, \varepsilon}, \theta_0 \right) -  \partial_x f\left( x_u^{\theta_0}, \theta \right) b\left( x_u^{\theta_0}, \theta_0 \right)  \right\}du \right|+\varepsilon \left| \int_s^t f\left(X_u^{\theta_0, \varepsilon}, \theta \right) dW_u^H \right|\\
&\leq \left| \int_s^t \left\{ \partial_x f\left( X_u^{\theta_0, \varepsilon}, \theta \right) b\left( X_u^{\theta_0, \varepsilon}, \theta_0 \right) -  \partial_x f\left( x_u^{\theta_0}, \theta \right) b\left( X_u^{\theta_0,\varepsilon}, \theta_0 \right)  \right\}du \right| \\
&+\left| \int_s^t \left\{ \partial_x f\left( x_u^{\theta_0}, \theta \right) b\left( X_u^{\theta_0, \varepsilon}, \theta_0 \right) -  \partial_x f\left( x_u^{\theta_0}, \theta \right) b\left( x_u^{\theta_0}, \theta_0 \right)  \right\}du \right|\\
&+\varepsilon \left| \int_s^t f\left( X_u^{\theta_0, \varepsilon},\theta \right) - f\left( X_s^{\theta_0,\varepsilon},\theta \right)dW_u^H \right| + \varepsilon \left| f\left( X_s^{\theta_0, \varepsilon}, \theta \right)\left( W_t^H- W_s^H \right) \right|\\
&\uwave{<} \left(t-s \right) \left( 1+ \underset{0\leq t \leq T}{\sup} \left| X_t^{\theta_0,\varepsilon}\right|^N \right) \underset{0\leq t \leq T}{\sup} \left| X_t^{\theta_0, \varepsilon} -x_t^{\theta_0} \right| + (t-s) \left( 1+\underset{0\leq t \leq T}{\sup} \left| x_t^{\theta_0} \right|^N \right) \underset{0\leq t \leq T}{\sup} \left| X_t^{\theta_0, \varepsilon} -x_t^{\theta_0} \right|\\
&+ \varepsilon \left\| f \left( X_{\cdot}^{\theta_0, \varepsilon} , \theta \right) \right\|_{\lambda;[s,t]} \left\| W_{\cdot}^H \right\|_{\lambda;[s,t]} (t-s)^{2\lambda} + \varepsilon \left( 1+ \underset{0\leq t \leq T}{\sup} \left| X_t^{\theta_0, \varepsilon} \right\|^N \right) (t-s)^{\lambda}\\
&\uwave{<} \varepsilon \left\{ 1+ \left( \underset{0 \leq t \leq T}{\sup} \left| x_t^{\theta_0} \right| + \varepsilon e^{cT} \underset{0 \leq t \leq T}{\sup} \left| W_t^H \right| \right)^N \right\} (t-s) + \varepsilon \left(1+ \underset{0\leq t \leq T}{\sup} \left| x_t^{\theta_0} \right|^N \right)(t-s) \\
&+ \varepsilon \left[ \left\{ 1+ \left( \varepsilon e^{cT} + \underset{0 \leq t \leq T}{\sup} \left| x_t^{\theta_0} \right| \right)^N \right\} (t-s)^{1-\lambda} + \varepsilon \right](t-s)^{H-\lambda}(t-s)^{2\lambda} \\
&+  \varepsilon \left\{ 1+ \left( \underset{0 \leq t \leq T}{\sup} \left| x_t^{\theta_0} \right| + \varepsilon e^{cT} \underset{0 \leq t \leq T}{\sup} \left| W_t^H \right| \right)^N \right\} (t-s)^{\lambda}\\
&= \varepsilon \left\{ 1+ \left( \underset{0 \leq t \leq T}{\sup} \left| x_t^{\theta_0} \right| + \varepsilon e^{cT} \underset{0 \leq t \leq T}{\sup} \left| W_t^H \right| \right)^N \right\} (t-s) + \varepsilon \left(1+ \underset{0\leq t \leq T}{\sup} \left| x_t^{\theta_0} \right|^N \right)(t-s)\\
&+ \varepsilon  \left\{ 1+ \left( \varepsilon e^{cT} + \underset{0 \leq t \leq T}{\sup} \left| x_t^{\theta_0} \right| \right)^N \right\}(t-s)^{1+H} + \varepsilon^2 +\varepsilon \left\{ 1+ \left( \underset{0 \leq t \leq T}{\sup} \left| x_t^{\theta_0} \right| + \varepsilon e^{cT} \underset{0 \leq t \leq T}{\sup} \left| W_t^H \right| \right)^N \right\} (t-s)^{\lambda}\\
&\therefore \underset{\theta \in \Theta}{\sup} \left\| f\left(X_{\cdot}^{\theta_0,\varepsilon},\theta \right)- f\left(x_{\cdot}^{\theta_0}, \theta \right)\right\|_{\lambda;[0,T]} \rightarrow 0 \quad \text{a.s.} \quad(\varepsilon \downarrow 0).\\
&\therefore \underset{\theta \in \Theta}{\sup} \left| \sum_{k=0}^{n-1} f\left(X_{t_k}^{\theta_0,\varepsilon},\theta \right) \left( W_{t_{k+1}}^H - W_{t_k}^H \right)- \int_0^T f\left(x_t^{\theta_0},\theta \right) dW_t^H \right| \rightarrow 0 \quad \text{a.s.} \quad(\varepsilon \downarrow 0, n \rightarrow \infty).
\end{align*}
Therefore, from (\ref{proof}), we have (\ref{lem8}).

\end{proof}

The Proposition \ref{prop} follows immediately from Lemma \ref{1}--\ref{stochastic integral2}. From the identifiability condition (A2) and Proposition \ref{prop}, which implies the convergence of the contrast function $L_{n,\varepsilon}(\theta)$, Theorem \ref{consistency} can be shown using the following lemma. (see Frydman \cite{Frydman} and Kasonga \cite{Kasonga})

\begin{lem}
Assume that the family of random variables $L_{n}(\theta), n \in \mathbb{N}, \theta \in \Theta$, satisfies:
\begin{itemize}
\item[(1)]$ L_n(\theta) \rightarrow L(\theta) \quad \text{a.s.} \quad \text{uniformly in $\theta \in \Theta$} \quad(n \rightarrow \infty).$
\item[(2)] The limit $L$ is non-random and $L(\theta_0) \leq L(\theta)$ for all $\theta  \in \Theta$.
\item[(3)] It holds $L(\theta)=L(\theta_0)$ if and only if $\theta= \theta_0$.
\end{itemize}
Then, we have
\begin{align*}
\hat{\theta}_n \rightarrow \theta_0 \quad \text{a.s.} \quad(n\rightarrow \infty),
\end{align*}
where $\hat{\theta}_n=\underset{\theta \in \Theta }{\text{argmin}}L_n(\theta)$.
\end{lem}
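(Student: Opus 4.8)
The plan is to argue on the almost-sure event on which condition (1) holds, i.e. on which $\rho_n := \sup_{\theta \in \Theta} |L_n(\theta) - L(\theta)| \to 0$, and to prove the convergence \emph{deterministically} there; since this event has probability one, the stated a.s. conclusion follows at once. So I would fix such an $\omega$ and treat $L_n, L, \hat\theta_n$ as deterministic objects with $\rho_n \to 0$.

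First I would convert the uniform convergence of the objective functions into convergence of the \emph{values} $L(\hat\theta_n)$ to $L(\theta_0)$. Using that $\hat\theta_n$ minimizes $L_n$, so that $L_n(\hat\theta_n) \le L_n(\theta_0)$, I split
\[
L(\hat\theta_n) - L(\theta_0) = \big(L(\hat\theta_n) - L_n(\hat\theta_n)\big) + \big(L_n(\hat\theta_n) - L_n(\theta_0)\big) + \big(L_n(\theta_0) - L(\theta_0)\big) \le 2\rho_n,
\]
where the middle bracket is nonpositive and the two outer brackets are each bounded by $\rho_n$. Combined with $L(\hat\theta_n) \ge L(\theta_0)$ from condition (2), this sandwiches $0 \le L(\hat\theta_n) - L(\theta_0) \le 2\rho_n \to 0$, hence $L(\hat\theta_n) \to L(\theta_0)$.

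Next I would upgrade this convergence of values to convergence of the minimizers by a well-separated-minimum argument. For fixed $\delta > 0$ set $K_\delta := \{\theta \in \overline{\Theta} : |\theta - \theta_0| \ge \delta\}$, which is compact because $\Theta$ is bounded by (A1). On $K_\delta$ the continuous function $L$ attains its infimum $m(\delta)$, and by conditions (2)--(3) one has $m(\delta) > L(\theta_0)$. Since $L(\hat\theta_n) \to L(\theta_0) < m(\delta)$, for all large $n$ we obtain $L(\hat\theta_n) < m(\delta)$, which forces $\hat\theta_n \notin K_\delta$, i.e. $|\hat\theta_n - \theta_0| < \delta$. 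As $\delta > 0$ is arbitrary, $\hat\theta_n \to \theta_0$ on the chosen event, and therefore almost surely.

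The step I expect to be the crux is the strict gap $m(\delta) > L(\theta_0)$: the pointwise strict inequality $L(\theta) > L(\theta_0)$ away from $\theta_0$ supplied by condition (3) is \emph{not} by itself enough, since the infimum of $L - L(\theta_0)$ over a non-compact region could still vanish; it is the compactness of $\overline{\Theta}$ together with the continuity of the limit $L$ that promotes the pointwise inequality to a uniform gap. Two points therefore need care: (i) $L$ must be continuous, which in the present application holds because $L(\theta) = \int_0^T |b(x_s^{\theta_0},\theta) - b(x_s^{\theta_0},\theta_0)|^2\,ds$ is continuous in $\theta$ by (A3); and (ii) the strict inequality must persist up to the boundary $\partial\Theta$, which is guaranteed here because $L$ is defined and satisfies the identifiability (A2) for all $\theta \in \mathbb{R}^d$, not merely on the open set $\Theta$. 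With these two ingredients the compactness argument applies verbatim.
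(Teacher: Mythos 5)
Your argument is sound, and it is worth noting at the outset that the paper does not prove this lemma at all: it is invoked with a pointer to Frydman and Kasonga, whose versions of the statement assume a compact parameter set and continuous criterion functions. Your self-contained proof is the standard Wald-type argument that those references formalize: first the sandwich $0 \le L(\hat\theta_n) - L(\theta_0) \le 2\rho_n$, which uses only the minimizing property of $\hat\theta_n$, condition (2), and uniform convergence; then the upgrade from convergence of values to convergence of minimizers via a well-separation bound $\inf_{\theta \in K_\delta} L(\theta) > L(\theta_0)$ on the compact set $K_\delta = \{\theta \in \overline{\Theta} : |\theta-\theta_0| \ge \delta\}$. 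Both steps are correct, and you have isolated exactly the right crux: pointwise strict inequality from (3) must be promoted to a uniform gap, and compactness plus continuity is what does it.

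The caveat you raise in your final paragraph deserves stronger billing than a point ``needing care'': it is indispensable, because the lemma as stated (with $\Theta$ open, conditions (2)--(3) imposed only on $\Theta$, and no continuity hypothesis on $L$) is actually false. For a counterexample, take $\Theta = (0,1)$, $\theta_0 = 1/2$, $L(\theta) = \min\{(\theta-\tfrac{1}{2})^2,\, 1-\theta\}$, and $L_n = L - \psi_n$ where $\psi_n$ is a tent of height $2/n$ supported in $(1-n^{-2},1)$ and peaking at an interior point $\theta_n^* \to 1$; then (1)--(3) hold, the argmin exists near $\theta_n^*$, yet $\hat\theta_n \to 1 \neq \theta_0$. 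So your two supplements --- continuity of $L$ up to $\overline{\Theta}$ and strict separation at the boundary --- are not optional refinements of the compactness argument but the hypotheses that make the lemma true. On the first, you are right that (A3) gives Lipschitz-type continuity of $L$ in $\theta$ and hence a continuous extension to $\overline{\Theta}$. On the second, however, your claim that the identifiability (A2) ``is guaranteed for all $\theta \in \mathbb{R}^d$'' overstates what the paper provides: $b$ is defined only on $\mathbb{R} \times \Theta$ and (A2) quantifies over $\theta \in \Theta$, so strict inequality of the extension of $L$ on $\partial\Theta$ is an additional assumption, not a consequence of the paper's conditions. Your proof is therefore correct under the hypotheses you spell out --- and it usefully exposes that the paper's formulation is slightly too weak --- but the boundary identifiability should be stated as an assumption rather than presented as something the paper guarantees.
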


\subsection{Proof of asymptotic normality}
We set the following notation:
\begin{align*}
G_{n, \varepsilon} (\theta) &:=\left( G_{n,\varepsilon}^1(\theta), \cdots , G_{n,\varepsilon}^d(\theta) \right)^{\mathsf{T}},\\
G_{n, \varepsilon}^i(\theta)&:= \sum_{k=0}^{n-1} \partial_{\theta_i}b \left( X_{t_k}^{\theta_0, \varepsilon}, \theta \right) \left(X_{t_{k+1}}^{\theta_0, \varepsilon} - X_{t_{k}}^{\theta_0, \varepsilon} - \frac{1}{n} b \left( X_{t_k}^{\theta_0, \varepsilon}, \theta \right)\right) \quad \left(1\leq i \leq d\right),\\
K_{n, \varepsilon}(\theta) &:= \left( \partial_{\theta_j} G_{n, \varepsilon}^i(\theta) \right)_{1\leq i,j \leq d},\\
K^{i,j}(\theta)&:= \int_0^T \partial_{\theta_j} \partial_{\theta_i} b \left(x_s^{\theta_0}, \theta \right) \left( b\left(x_s^{\theta_0}, \theta_0 \right)- b\left(x_s^{\theta_0}, \theta \right) \right)ds - \int_0^T \partial_{\theta_i} b \left(x_s^{\theta_0}, \theta \right) \partial_{\theta_j} b \left(x_s^{\theta_0}, \theta \right)ds\\
&=\int_0^T \partial_{\theta_j} \partial_{\theta_i} b \left(x_s^{\theta_0}, \theta \right) \left( b\left(x_s^{\theta_0}, \theta_0 \right)- b\left(x_s^{\theta_0}, \theta \right) \right)ds -I^{i,j}(\theta) \quad(1\leq i,j \leq d).
\end{align*}

From Hu \textit{et al.} \cite{Shimizu}, p.431-432, it is sufficient to show the following lemmas to prove Theorem \ref{main3}.

\begin{lem}
Suppose that the assumptions (A1), (A3)--(A4) and either of the following conditions hold. 
\begin{itemize}
\item[(1)] $H>\frac{1}{2}$ and $\varepsilon n \rightarrow \infty . $
\item[(2)] $H\leq \frac{1}{2}$, $\varepsilon n^{1-H} \rightarrow 0$ and $\varepsilon n \rightarrow \infty . $
\end{itemize}
Then we have
\begin{align}
\label{lem9}
\frac{1}{\varepsilon}G_{n,\varepsilon}^i (\theta_0) \rightarrow \int_0^T \partial_{\theta_i} b\left( x_s^{\theta_0}, \theta_0 \right)dW_s^H \quad \text{a.s.} \quad( n \rightarrow \infty, \varepsilon \downarrow 0).
\end{align}
\end{lem}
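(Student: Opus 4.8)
The plan is to substitute the integral form of the SDE (\ref{sde1}) into $G_{n,\varepsilon}^i(\theta_0)$ and peel off the pure fractional-noise sum from a drift-discretization remainder. Since $t_{k+1}-t_k=1/n$, we have $\tfrac1n b(X_{t_k}^{\theta_0,\varepsilon},\theta_0)=\int_{t_k}^{t_{k+1}}b(X_{t_k}^{\theta_0,\varepsilon},\theta_0)\,ds$, hence
\begin{align*}
X_{t_{k+1}}^{\theta_0,\varepsilon}-X_{t_k}^{\theta_0,\varepsilon}-\tfrac1n b(X_{t_k}^{\theta_0,\varepsilon},\theta_0)
=\int_{t_k}^{t_{k+1}}\!\left(b(X_s^{\theta_0,\varepsilon},\theta_0)-b(X_{t_k}^{\theta_0,\varepsilon},\theta_0)\right)ds+\varepsilon\left(W_{t_{k+1}}^H-W_{t_k}^H\right).
\end{align*}
Dividing by $\varepsilon$ yields the decomposition
\begin{align*}
\frac{1}{\varepsilon}G_{n,\varepsilon}^i(\theta_0)=\sum_{k=0}^{n-1}\partial_{\theta_i}b(X_{t_k}^{\theta_0,\varepsilon},\theta_0)\left(W_{t_{k+1}}^H-W_{t_k}^H\right)+B_{n,\varepsilon}^i,
\end{align*}
where $B_{n,\varepsilon}^i:=\frac{1}{\varepsilon}\sum_{k=0}^{n-1}\partial_{\theta_i}b(X_{t_k}^{\theta_0,\varepsilon},\theta_0)\int_{t_k}^{t_{k+1}}\big(b(X_s^{\theta_0,\varepsilon},\theta_0)-b(X_{t_k}^{\theta_0,\varepsilon},\theta_0)\big)ds$. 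So the task splits into a stochastic-sum term and a deterministic-type remainder.

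For the noise sum I would apply the Riemann--Stieltjes approximation lemmas with $f=\partial_{\theta_i}b$. Under (A4) we have $\partial_{\theta_i}b\in C_\uparrow^{1,1}$, and the Lipschitz-in-$x$ bounds required by the lemmas follow from (A3)--(A4). In regime (1) ($H>\tfrac12$) Lemma \ref{stochastic integral2} applies, and in regime (2) ($H\le\tfrac12$ with $\varepsilon n^{1-H}\to0$) Lemma \ref{stochastic integral1} applies; in either case the sum converges a.s. to $\int_0^T\partial_{\theta_i}b(x_s^{\theta_0},\theta_0)\,dW_s^H$, which is exactly the claimed limit. The whole statement therefore reduces to proving $B_{n,\varepsilon}^i\to0$ a.s.

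The remainder $B_{n,\varepsilon}^i$ is the heart of the matter, and the only place where the new hypothesis $\varepsilon n\to\infty$ is used. I would estimate $|b(X_s^{\theta_0,\varepsilon},\theta_0)-b(X_{t_k}^{\theta_0,\varepsilon},\theta_0)|\le c\,|X_s^{\theta_0,\varepsilon}-X_{t_k}^{\theta_0,\varepsilon}|$ by (A3), then insert the pathwise modulus estimate (\ref{ine}), of the form $|X_s^{\theta_0,\varepsilon}-X_{t_k}^{\theta_0,\varepsilon}|\lesssim (s-t_k)(1+R^N)+\varepsilon\,(s-t_k)^{\lambda}$ for $\lambda\in(0,H)$, writing $R:=\varepsilon e^{cT}\sup_{t}|W_t^H|+\sup_{t}|x_t^{\theta_0}|$. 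The prefactor $|\partial_{\theta_i}b(X_{t_k}^{\theta_0,\varepsilon},\theta_0)|$ is bounded by $c(1+\sup_t|X_t^{\theta_0,\varepsilon}|^N)$ (using the third bound in (A3) and (A1)), an a.s.\ finite random variable. Integrating on each subinterval and summing the $n$ terms gives
\begin{align*}
|B_{n,\varepsilon}^i|\lesssim \frac{1}{\varepsilon}\Big(1+\sup_{0\le t\le T}|X_t^{\theta_0,\varepsilon}|^N\Big)\left[\frac{1}{n}(1+R^N)+\varepsilon\,\frac{1}{n^{\lambda}}\right]\lesssim \frac{1}{\varepsilon n}+\frac{1}{n^{\lambda}},
\end{align*}
the two surviving contributions coming from the drift-time increment and the $\varepsilon$-scaled noise increment respectively. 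The second term vanishes because $\lambda>0$, while the first vanishes \emph{precisely} because $\varepsilon n\to\infty$; the comparison lemma keeps $\sup_t|X_t^{\theta_0,\varepsilon}|$ uniformly finite a.s.\ (its bound tends to $\sup_t|x_t^{\theta_0}|$), so the random prefactor does not spoil the limit. The delicate point I expect to check carefully is exactly this balance: the $1/\varepsilon$ blow-up from the division must be compensated by the extra factor $1/n$ produced by the drift discretization, leaving $1/(\varepsilon n)$, which is what forces the assumption $\varepsilon n\to\infty$. Combining the noise sum and the remainder then yields (\ref{lem9}) in both regimes.
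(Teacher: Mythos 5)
Your proposal is correct and follows essentially the same route as the paper: the identical decomposition into the fBm-increment sum (handled by Lemma \ref{stochastic integral2} for $H>\frac12$ and Lemma \ref{stochastic integral1} for $H\le\frac12$) plus a drift-discretization remainder, which is then bounded via the pathwise increment estimate (\ref{ine}) to give the two contributions of order $\frac{1}{\varepsilon n}$ and $\left(\frac{1}{n}\right)^{\lambda}$, the first of which is exactly where $\varepsilon n \rightarrow \infty$ enters. The paper's proof splits the remainder into $H_{n,\varepsilon}^{(1,1)}$ and $H_{n,\varepsilon}^{(1,2)}$ in precisely the way your bound does, so there is no substantive difference.
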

\begin{proof}
\begin{align*}
\frac{1}{\varepsilon}G_{n,\varepsilon}^i (\theta_0)&=\frac{1}{\varepsilon}\sum_{k=0}^{n-1} \partial_{\theta_i}b \left( X_{t_k}^{\theta_0, \varepsilon}, \theta \right) \left(X_{t_{k+1}}^{\theta_0, \varepsilon} - X_{t_{k}}^{\theta_0, \varepsilon} - \frac{1}{n} b \left( X_{t_k}^{\theta_0, \varepsilon}, \theta \right)\right) \\
&=\frac{1}{\varepsilon}\sum_{k=0}^{n-1} \partial_{\theta_i}b \left( X_{t_k}^{\theta_0, \varepsilon}, \theta \right)  \int_{t_k}^{t_{k+1}} \left( b\left(X_s^{\theta_0,\varepsilon}, \theta_0 \right)-b\left(X_{t_k}^{\theta_0,\varepsilon}, \theta_0 \right) \right)ds + \sum_{k=0}^{n-1} \partial_{\theta_i}b \left( X_{t_k}^{\theta_0, \varepsilon}, \theta \right) \left( W_{t_{k+1}}^H - W_{t_k}^H \right)\\
&=: H_{n , \varepsilon}^{(1)}(\theta_0) + H_{n , \varepsilon}^{(2)}(\theta_0).
\end{align*}
By Lemma \ref{stochastic integral1} and Lemma \ref{stochastic integral2}, we have 
\begin{align*}
H_{n , \varepsilon}^{(2)}(\theta_0) \rightarrow \int_0^T \partial_{\theta_i} b\left( x_s^{\theta_0}, \theta_0 \right)dW_s^H \quad \text{a.s.} \quad( n \rightarrow \infty, \varepsilon \downarrow 0) \quad \text{a.s.} \quad (n \rightarrow \infty, \varepsilon \downarrow 0).
\end{align*}
\begin{align*}
H_{n , \varepsilon}^{(1)}(\theta_0) &\lesssim \frac{1}{n\varepsilon} \sum_{k=0}^{n-1} \left| \partial_{\theta_i} b\left(X_{t_k}^{\theta_0, \varepsilon}, \theta_0 \right) \right| \underset{t_k \leq s \leq t_{k+1}}{\sup} \left| X_s^{\theta_0, \varepsilon} -X_{t_k}^{\theta_0, \varepsilon} \right|\\
&\lesssim \frac{1}{n\varepsilon} \sum_{k=0}^{n-1} \left| \partial_{\theta_i} b\left(X_{t_k}^{\theta_0, \varepsilon}, \theta_0 \right) \right| \left[ \frac{1}{n} \left\{1+\left(\varepsilon e^{cT} \underset{0 \leq t \leq T}{\sup} \left|W_t^H \right| + \underset{0\leq t \leq T}{\sup} \left|x_s^{\theta_0} \right|  \right)^N \right\} + \varepsilon \underset{t_k \leq s \leq t_{k+1}}{\sup} \left| W_s^H- W_{t_k}^H \right| \right]\\
&=\frac{1}{n^2\varepsilon} \sum_{k=0}^{n-1} \left| \partial_{\theta_i} b\left(X_{t_k}^{\theta_0, \varepsilon}, \theta_0 \right) \right|  \left\{1+\left(\varepsilon e^{cT} \underset{0 \leq t \leq T}{\sup} \left|W_t^H \right| + \underset{0\leq t \leq T}{\sup} \left|x_s^{\theta_0} \right| \right)^N\right\} \\
&+ \frac{1}{n} \sum_{k=0}^{n-1} \left| \partial_{\theta_i} b\left(X_{t_k}^{\theta_0, \varepsilon}, \theta_0 \right) \right| \underset{t_k \leq s \leq t_{k+1}}{\sup} \left| W_s^H- W_{t_k}^H \right|\\
&=:H_{n,\varepsilon}^{(1,1)}(\theta_0) + H_{n, \varepsilon}^{(1,2)}(\theta_0).
\end{align*}
\end{proof}
Since $ \frac{1}{n} \sum_{k=0}^{n-1} \left| \partial_{\theta_i} b\left(X_{t_k}^{\theta_0, \varepsilon}, \theta_0 \right) \right|  \lesssim \left(1+ \underset{0 \leq t \leq T}{\sup} \left|X_t^{\theta_0 , \varepsilon} \right|^N \right) < \infty $ holds, we have $H_{n,\varepsilon}^{(1,1)}(\theta_0) \rightarrow 0 \quad(n\rightarrow \infty , \varepsilon \downarrow 0)$. We also obtain
\begin{align*}
 H_{n, \varepsilon}^{(1,2)}(\theta_0) &\lesssim \frac{1}{n} \sum_{k=0}^{n-1} \left( 1+ \underset{0 \leq t \leq T}{\sup} \left| X_t^{\theta_0, \varepsilon} \right|^N \right) \underset{t_k \leq s \leq t_{k+1}}{\sup} \left| W_s^H- W_{t_k}^H \right|\\
 &\uwave{<} \left( \frac{1}{n} \right)^{\lambda} \left( 1+ \underset{0 \leq t \leq T}{\sup} \left| X_t^{\theta_0, \varepsilon} \right|^N \right)\\
 &\rightarrow 0 \quad(n\rightarrow \infty, \varepsilon \downarrow0).
\end{align*}
Thus, we have $H_{n, \varepsilon}^{(1)}(\theta_0) \rightarrow 0 \quad \text{a.s.} \quad (n\rightarrow \infty, \varepsilon \downarrow 0)$. Therefore, we obtain (\ref{lem9}) and this proof is completed.

\begin{lem}
Suppose that the assumptions (A1) and (A3) and either of the following conditions hold. 
\begin{itemize}
\item[(1)] $H>\frac{1}{2}$. 
\item[(2)] $H\leq \frac{1}{2}$, $\varepsilon^2 n^{1-H} \rightarrow 0$.
\end{itemize}
Then we have
\begin{align*}
\underset{\theta \in \Theta}{\sup} \left\| K_{n, \varepsilon}(\theta) - K(\theta) \right\| \rightarrow 0 \quad \text{a.s.} \quad (n\rightarrow \infty, \varepsilon \downarrow 0).
\end{align*}
\end{lem}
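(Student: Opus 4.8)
The plan is to obtain the $(i,j)$ entry $\partial_{\theta_j}G_{n,\varepsilon}^i(\theta)$ of $K_{n,\varepsilon}(\theta)$ in closed form and then match the resulting terms one by one against the two integrals defining $K^{i,j}(\theta)$. Differentiating $G_{n,\varepsilon}^i$ under the finite sum gives
\begin{align*}
\partial_{\theta_j}G_{n,\varepsilon}^i(\theta)&=\sum_{k=0}^{n-1}\partial_{\theta_j}\partial_{\theta_i}b\left(X_{t_k}^{\theta_0,\varepsilon},\theta\right)\left(X_{t_{k+1}}^{\theta_0,\varepsilon}-X_{t_k}^{\theta_0,\varepsilon}-\tfrac{1}{n}b\left(X_{t_k}^{\theta_0,\varepsilon},\theta\right)\right)\\
&\quad-\frac{1}{n}\sum_{k=0}^{n-1}\partial_{\theta_i}b\left(X_{t_k}^{\theta_0,\varepsilon},\theta\right)\partial_{\theta_j}b\left(X_{t_k}^{\theta_0,\varepsilon},\theta\right).
\end{align*}
The second sum is a Riemann-type sum of a function in $C_\uparrow^{1,1}(\mathbb{R},\Theta)$, so by Lemma \ref{riemann} it converges a.s., uniformly in $\theta$, to $-\int_0^T\partial_{\theta_i}b(x_s^{\theta_0},\theta)\partial_{\theta_j}b(x_s^{\theta_0},\theta)\,ds=-I^{i,j}(\theta)$, the second integral in $K^{i,j}(\theta)$.

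For the first sum I would insert the exact increment $X_{t_{k+1}}^{\theta_0,\varepsilon}-X_{t_k}^{\theta_0,\varepsilon}=\int_{t_k}^{t_{k+1}}b(X_s^{\theta_0,\varepsilon},\theta_0)\,ds+\varepsilon(W_{t_{k+1}}^H-W_{t_k}^H)$ and then add and subtract $b(X_{t_k}^{\theta_0,\varepsilon},\theta_0)$, so that the bracket splits as
\begin{align*}
\int_{t_k}^{t_{k+1}}\left(b(X_s^{\theta_0,\varepsilon},\theta_0)-b(X_{t_k}^{\theta_0,\varepsilon},\theta_0)\right)ds+\tfrac{1}{n}\left(b(X_{t_k}^{\theta_0,\varepsilon},\theta_0)-b(X_{t_k}^{\theta_0,\varepsilon},\theta)\right)+\varepsilon\left(W_{t_{k+1}}^H-W_{t_k}^H\right).
\end{align*}
This decomposes the first sum into three parts. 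The middle part is again a Riemann sum and, by Lemma \ref{riemann}, tends uniformly and a.s. to $\int_0^T\partial_{\theta_j}\partial_{\theta_i}b(x_s^{\theta_0},\theta)(b(x_s^{\theta_0},\theta_0)-b(x_s^{\theta_0},\theta))\,ds$, the first integral in $K^{i,j}(\theta)$. The part carrying $\int_{t_k}^{t_{k+1}}(b(X_s^{\theta_0,\varepsilon},\theta_0)-b(X_{t_k}^{\theta_0,\varepsilon},\theta_0))\,ds$ has exactly the structure of $R_{n,\varepsilon}(\theta)$ in Lemma \ref{1}: the prefactor $\partial_{\theta_j}\partial_{\theta_i}b$ has polynomial growth by (A4), and the Lipschitz bound (A3) together with the pathwise increment estimate (\ref{ine}) makes it $O(n^{-1})+O(\varepsilon n^{-\lambda})\to 0$ uniformly and a.s.

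The remaining and decisive part is the stochastic term $\varepsilon\sum_{k=0}^{n-1}\partial_{\theta_j}\partial_{\theta_i}b(X_{t_k}^{\theta_0,\varepsilon},\theta)(W_{t_{k+1}}^H-W_{t_k}^H)$, which is where the hypotheses on $H$ and the rate enter. I would split it as $\varepsilon\sum_k(\partial_{\theta_j}\partial_{\theta_i}b(X_{t_k}^{\theta_0,\varepsilon},\theta)-\partial_{\theta_j}\partial_{\theta_i}b(x_{t_k}^{\theta_0},\theta))(W_{t_{k+1}}^H-W_{t_k}^H)$ plus $\varepsilon\sum_k\partial_{\theta_j}\partial_{\theta_i}b(x_{t_k}^{\theta_0},\theta)(W_{t_{k+1}}^H-W_{t_k}^H)$. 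For the second piece the sum converges to the Young integral $\int_0^T\partial_{\theta_j}\partial_{\theta_i}b(x_s^{\theta_0},\theta)\,dW_s^H$, which is a.s. finite, so the factor $\varepsilon$ sends it to $0$ for every $H\in(0,1)$. For the first piece, applying Young's inequality (Lemma \ref{Young}), bounding the $\tfrac{1}{1-H}$-variation of the increments of $\partial_{\theta_j}\partial_{\theta_i}b$ via its Lipschitz property and $\sup_{t}|X_t^{\theta_0,\varepsilon}-x_t^{\theta_0}|\lesssim\varepsilon\sup_t|W_t^H|$, yields a bound $\lesssim\varepsilon^2 n^{1-H}(\sum_k|W_{t_{k+1}}^H-W_{t_k}^H|^{1/H})^H$; since the $1/H$-power sum is a.s. bounded by Neuenkirch and Tindel \cite{Nuenkirch}, this vanishes precisely under $\varepsilon^2 n^{1-H}\to 0$ in case (2), while for $H>\tfrac{1}{2}$ in case (1) one instead invokes Lemma \ref{stochastic integral2} to get convergence of the whole sum and kills it with the $\varepsilon$ factor. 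Collecting the four limits gives $\partial_{\theta_j}G_{n,\varepsilon}^i(\theta)\to K^{i,j}(\theta)$ uniformly and a.s. for each $(i,j)$, and since $d$ is fixed this yields $\sup_{\theta\in\Theta}\|K_{n,\varepsilon}(\theta)-K(\theta)\|\to 0$ a.s. in any matrix norm. The main obstacle is precisely this stochastic term: keeping the estimate uniform in $\theta$ while exploiting the extra factor $\varepsilon$ to weaken the required rate from $\varepsilon n^{1-H}\to 0$, needed in Lemma \ref{stochastic integral1}, down to $\varepsilon^2 n^{1-H}\to 0$.
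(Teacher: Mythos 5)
Your proposal is correct and follows essentially the same route as the paper: the identical expansion of $\partial_{\theta_j}G^{i}_{n,\varepsilon}(\theta)$, the same splitting into Riemann sums (handled by Lemma \ref{riemann}), an $R_{n,\varepsilon}$-type drift term (handled as in Lemma \ref{1}), and the stochastic term $\varepsilon\sum_{k}\partial_{\theta_j}\partial_{\theta_i}b\left(X_{t_k}^{\theta_0,\varepsilon},\theta\right)\left(W_{t_{k+1}}^H-W_{t_k}^H\right)$. The only cosmetic difference is that where the paper simply cites Lemmas \ref{stochastic integral1} and \ref{stochastic integral2} for this last term, you inline the Young-inequality estimate showing that the extra factor $\varepsilon$ relaxes the required rate to $\varepsilon^{2}n^{1-H}\rightarrow 0$, which is precisely the content of the second assertion of Lemma \ref{stochastic integral1}.
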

\begin{proof}
\begin{align*}
K_{n,\varepsilon}^{i,j} (\theta)&=\partial_{\theta_j} G_{n,\varepsilon}^i (\theta)\\
&=\sum_{k=0}^{n-1} \partial_{\theta_j} \partial_{\theta_i}b \left( X_{t_k}^{\theta_0, \varepsilon}, \theta \right) \left(X_{t_{k+1}}^{\theta_0, \varepsilon} - X_{t_{k}}^{\theta_0, \varepsilon} - \frac{1}{n} b \left( X_{t_k}^{\theta_0, \varepsilon}, \theta \right)\right) - \frac{1}{n} \sum_{k=0}^{n-1} \partial_{\theta_i}b \left( X_{t_k}^{\theta_0, \varepsilon}, \theta \right) \partial_{\theta_j}b \left( X_{t_k}^{\theta_0, \varepsilon}, \theta \right) \\
&=\sum_{k=0}^{n-1} \partial_{\theta_j} \partial_{\theta_i}b \left( X_{t_k}^{\theta_0, \varepsilon}, \theta \right) \left(X_{t_{k+1}}^{\theta_0, \varepsilon} - X_{t_{k}}^{\theta_0, \varepsilon} - \frac{1}{n} b \left( X_{t_k}^{\theta_0, \varepsilon}, \theta_0 \right)\right)\\
&+\frac{1}{n} \sum_{k=0}^{n-1} \left\{ \partial_{\theta_i} \partial_{\theta_j} b\left( X_{t_k}^{\theta_0 ,\varepsilon}, \theta_0 \right) \left( b\left( X_{t_k}^{\theta_0 ,\varepsilon}, \theta_0 \right)-b\left( X_{t_k}^{\theta_0 ,\varepsilon}, \theta \right) \right) - \partial_{\theta_i} b\left( X_{t_k}^{\theta_0 ,\varepsilon}, \theta \right) \partial_{\theta_j} b\left( X_{t_k}^{\theta_0 ,\varepsilon}, \theta \right) \right\}\\
&=: K_{n,\varepsilon}^{(1)}(\theta) + K_{n,\varepsilon}^{(2)}(\theta).
\end{align*}
By using Lemma \ref{riemann}  and letting $f(x,\theta)=\partial_{\theta_i} \partial_{\theta_j} b\left(x, \theta_0 \right) \left( b\left(x, \theta_0 \right)-b\left( x, \theta \right) \right) - \partial_{\theta_i} b\left( x, \theta \right) \partial_{\theta_j} b\left( x, \theta \right)$, we have $\underset{\theta \in \Theta }{\sup} \left| K_{n,\varepsilon}^{(2)}(\theta)- K^{i,j}(\theta) \right| \rightarrow 0 \quad \text{a.s.} \quad (n \rightarrow \infty, \varepsilon \downarrow 0).$ By Lemma \ref{1}, Lemma \ref{stochastic integral1} and Lemma \ref{stochastic integral2}, we have
\begin{align*}
&\underset{\theta \in \Theta}{\sup} \left|K_{n,\varepsilon}^{(1)}(\theta) \right|\\
&\leq \underset{\theta \in \Theta}{\sup} \left| \sum_{k=0}^{n-1} \partial_{\theta_j} \partial_{\theta_i}b \left( X_{t_k}^{\theta_0, \varepsilon}, \theta \right)  \int_{t_k}^{t_{k+1}} \left( b\left(X_s^{\theta_0, \varepsilon}, \theta_0 \right) - b\left(X_{t_k}^{\theta_0, \varepsilon}, \theta_0 \right)\right) ds\right| + \underset{\theta \in \Theta}{\sup} \left| \varepsilon \sum_{k=0}^{n-1} \partial_{\theta_j} \partial_{\theta_i}b \left( X_{t_k}^{\theta_0, \varepsilon}, \theta \right) \left( W_{t_{k+1}}^H - W_{t_k}^H \right) \right|\\
&\rightarrow 0 \quad \text{a.s.} \quad (n\rightarrow \infty, \varepsilon \downarrow 0).
\end{align*}
\end{proof}

\section{Numerical results}
In this section, we simulate the results of Theorem \ref{main3}. Consider the fractional Ornstein-Uhlenbeck process:
\begin{eqnarray*}
\left\{
\begin{split}
dX_t^{\theta, \varepsilon}&=-\theta X_t^{\theta, \varepsilon}+\varepsilon dW_t^H \quad (0\leq t \leq T),\\
X_0^{\theta,\varepsilon}&=1,
\end{split}
\right.
\end{eqnarray*}
where $\theta$ is a positive constant.  On simulating (\ref{main}), we need to identify the distribution of $S(\theta)$. From Biagini et al. \cite{Biagini}, p.124, Young integral $\int_0^T x_t^{\theta_0} dW_t^H$ coincides with the symmetric integral $\int_0^T x_t^{\theta_0}  d\circ W_t^H$. Moreover, from Biagini et al. \cite{Biagini} et al., p.128 and p.130, the symmetric integral $\int_0^T x_t^{\theta_0}  d\circ W_t^H$ coincides with the Skorohod integral $\delta^{W_H} (x_{\cdot}^{\theta})$. Therefore, since  the Young integral $\int_0^T x_t^{\theta_0} dW_t^H$ coicides with the Skorohod integral $\delta^{W_H} (x_{\cdot}^{\theta})$, it is sufficient to consider the distribution of the Skorohod integral $\delta^{W_H} (x_{\cdot}^{\theta})$. $\delta^{W_H} (x_{\cdot}^{\theta})$ coincides with the It\^{o} integral $\int_0^T x_t^{\theta_0} dW_t$ by Nualart \cite{Nualart}, p.44 and p.288, so the distribution of the Young integral $\int_0^T x_t^{\theta_0} dW_t^H$  is the normal distribution with the meanthe $0$ and the variance $\mathbb{E}\left[ \left(\int_0^T x_t^{\theta_0} dW_t^H \right)^2  \right]$.

In each experiment, we generate a discrete sample $(X_{t_k}^{\theta_0, \varepsilon})_{k=1}^{n}$ by using the Euler scheme (see Nuenkirch and Nourdin \cite{Nuenkirch2}) and compute $\hat{\theta}_{n,\varepsilon}$ from sample by Newton method. This procedure is iterated $1000$ times, and the mean and the standard deviation of $1000$ sampled estimators are computed in each case of $(H, \theta_0, n, \varepsilon)$ to confirm the strong consistency in Theorem \ref{consistency}. We also confirmed the asymptotic normality of Theorem \ref{main3} by creating the Normal Q-Q plot and the histogram using $\frac{I(\theta_0)\left(\hat{\theta}_{n,\varepsilon} -\theta_0 \right)}{\varepsilon \sqrt{\mathbb{E}\left[ \left(\int_0^T x_t^{\theta_0} dW_t^H \right)^2  \right]}}$. Here, $\mathbb{E}\left[ \left(\int_0^T x_t^{\theta_0} dW_t^H \right)^2  \right]$ is computed by a Monte Carlo simulation. Since the balance of convergence speed between $\varepsilon$ and $n$ is required differently depending on the value of the Hurst index$H$, the setting of the value of $(\varepsilon,n)$ is considered separately for cases when the Hurst index $H$ is greater than $\frac{1}{2}$ and less than $\frac{1}{2}$. In the case of $H>\frac{1}{2}$, we need $n\varepsilon \rightarrow 0$, so we adapt $\varepsilon=1.0, 0.5, 0.1$ and $n=100, 500,1000$. On the other hand, for $H\leq \frac{1}{2}$, the convergence order of $\varepsilon$ must be greater than $\frac{1}{n^{1-H}}$ and less than $\frac{1}{n}$ because $n^{1-H}\varepsilon \rightarrow 0$ and $n\varepsilon \rightarrow \infty$ are required. Thus, we adopt $\varepsilon=0.01, 0.005, 0.001$ and $n=100, 500, 1000$ for $H=0.25$. In addition, in order to check the behavior in the case where the assumption of balance of the convergence order for the asymptotic normality in Theorem \ref{main3} is not satisfied, we comute $\hat{\theta}_{n,\varepsilon}$ when $\varepsilon = 0.1$ and create the Normal Q-Q plot and the Histgram for $H=0.25$. The results of those are shown in Table \ref{table1}-\ref{table2} and Figure \ref{fig1}-\ref{fig20}.

\begin{table}[h]
\caption{Mean and standard devitation of the estimator $\hat{\theta}_{n,\varepsilon}$ through 1000 experiments in the case $H=0.75, \theta_0=1.0 T=1.0$}.
\label{table1}
\centering
\begin{tabular}{ccccc}
\hline
$ $ & $n=100$ & $n=500$& $n=1000$ \\
\hline
$\hat{\theta}_{n,\varepsilon=1.0}$ (s.d.)&1.02420(1.04458)&0.96179(1.04415)&0.92395(1.02431)\\

$\hat{\theta}_{n,\varepsilon=0.5}$ (s.d.)&1.08895(0.66375)&1.02890(0.66751)&1.08095(0.66120)\\

$\hat{\theta}_{n,\varepsilon=0.1}$ (s.d.)&1.00828(0.14035)&1.00012(0.15209)&1.00534(0.15335)\\
\hline
\end{tabular}
\end{table}

\begin{table}[h]
\caption{Mean and standard devitation of the estimator $\hat{\theta}_{n,\varepsilon}$ through 1000 experiments in the case  $H=0.25, \theta_0=1.0, T=1.0$.}
\label{table2}
\centering
\begin{tabular}{ccccc}
\hline
$ $ & $n=100$ & $n=500$& $n=1000$ \\
\hline
$\hat{\theta}_{n,\varepsilon=0.1}$ (s.d.)&1.12705(0.17712)&1.25678(0.19022)&1.35958(0.20237)\\

$\hat{\theta}_{n,\varepsilon=0.01}$ (s.d.)&1.00163(0.01630)&1.0022(0.01573)&1.00425(0.01598)\\

$\hat{\theta}_{n,\varepsilon=0.005}$ (s.d.)&1.00061(0.00829)&1.00069(0.00801)&1.00109(0.00773)\\

$\hat{\theta}_{n,\varepsilon=0.001}$ (s.d.)&1.00002(0.00154)&1.00004(0.00168)&1.00003(0.00156)\\
\hline
\end{tabular}
\end{table}

\begin{figure}[H]
  \begin{minipage}[b]{0.45 \linewidth}
    \centering
    \includegraphics[keepaspectratio, scale=0.5]{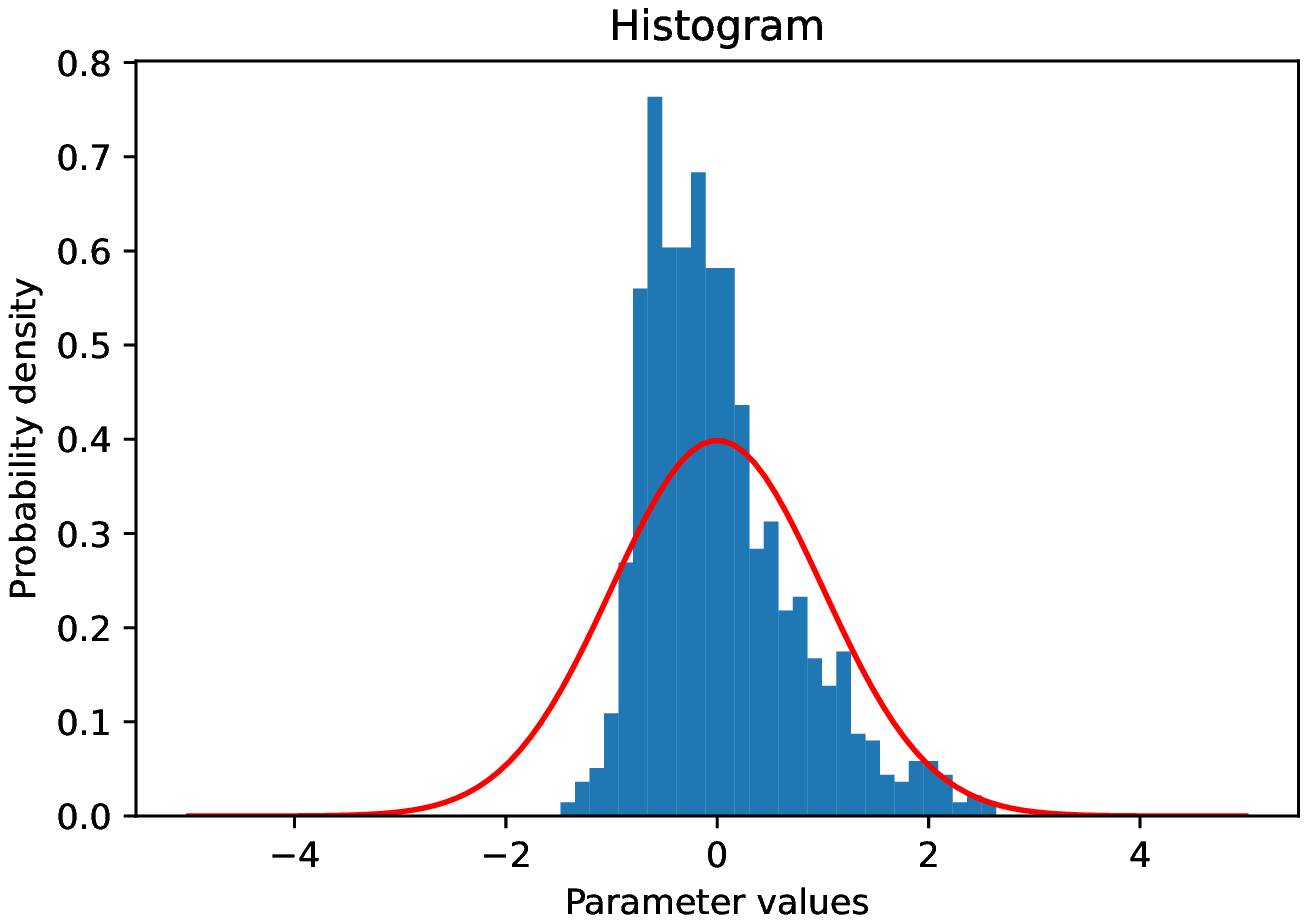}
  \end{minipage}
  \begin{minipage}[b]{0.45\linewidth}
    \centering
    \includegraphics[keepaspectratio, scale=0.5]{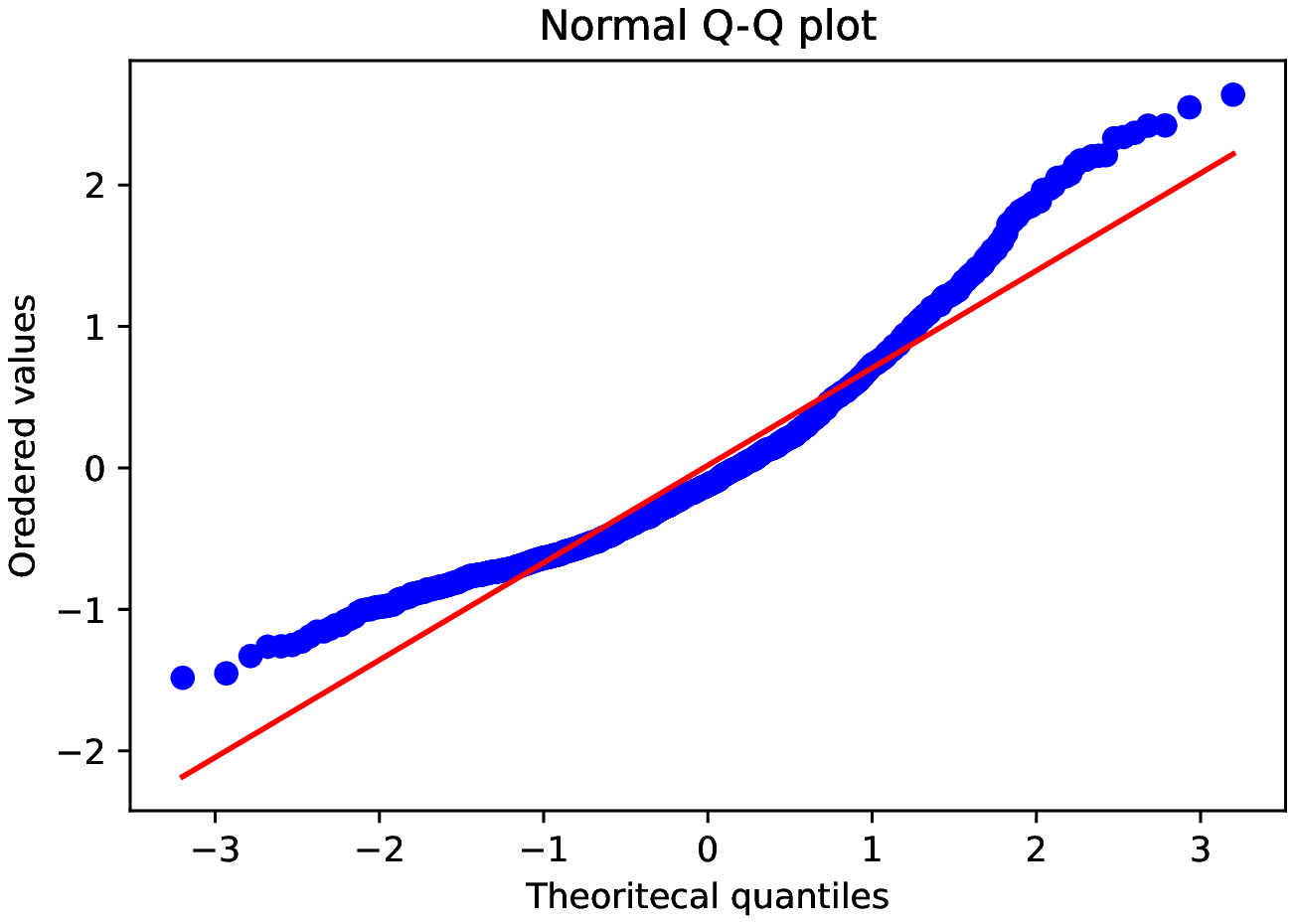}
  \end{minipage}
    \caption{Histogram and density function of a standard normal distribution (left) and Normal Q-Q plot (right) through $1000$ experiments in the case $H=0.75,\theta=1.0,\varepsilon=1.0, T=1.0, n=100$}
  \label{fig1}
\end{figure}

\begin{figure}[H]
  \begin{minipage}[t]{0.45\linewidth}
    \centering
    \includegraphics[keepaspectratio, scale=0.5]{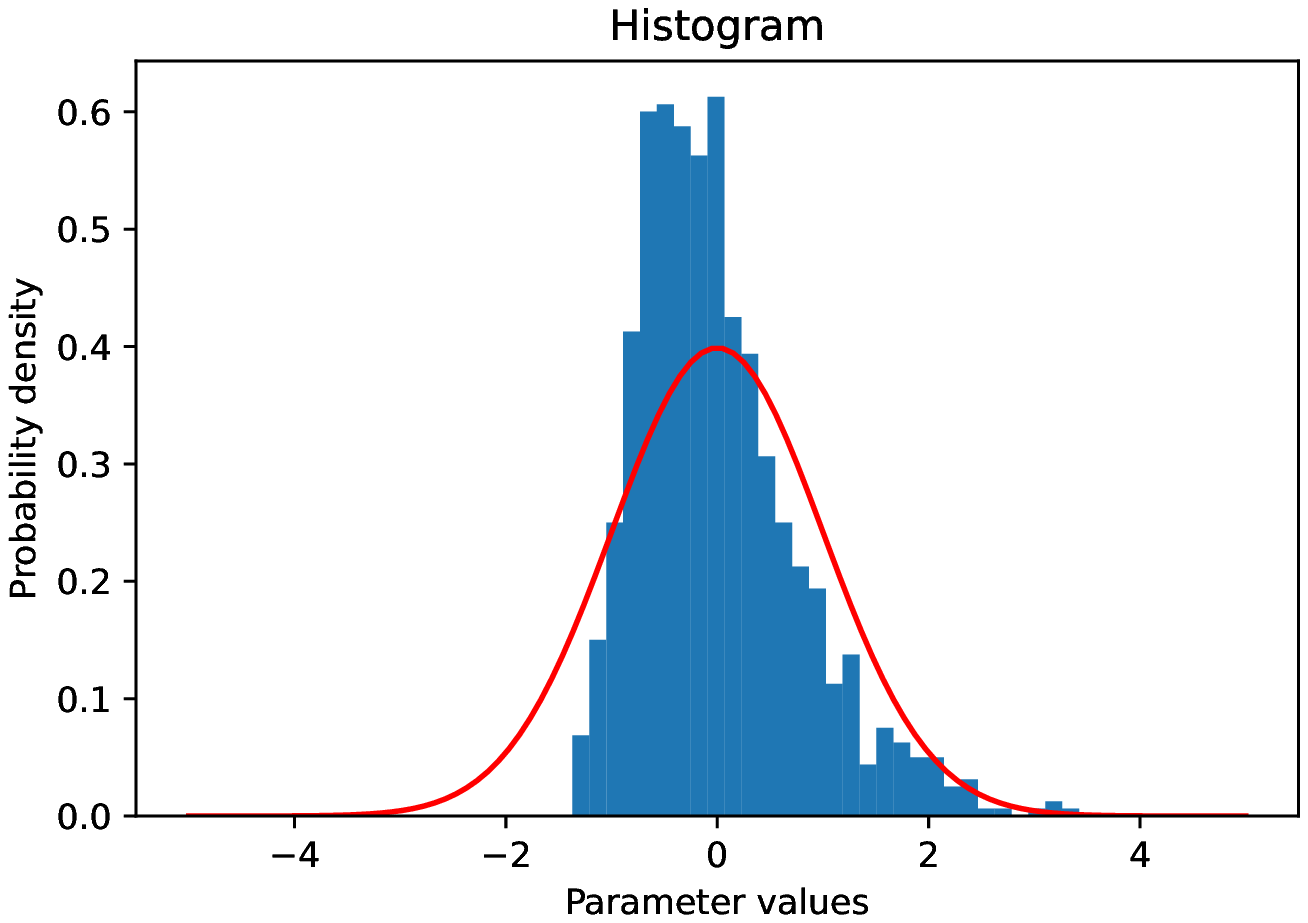}
 
  \end{minipage}
  \begin{minipage}[t]{0.45\linewidth}
    \centering
    \includegraphics[keepaspectratio, scale=0.5]{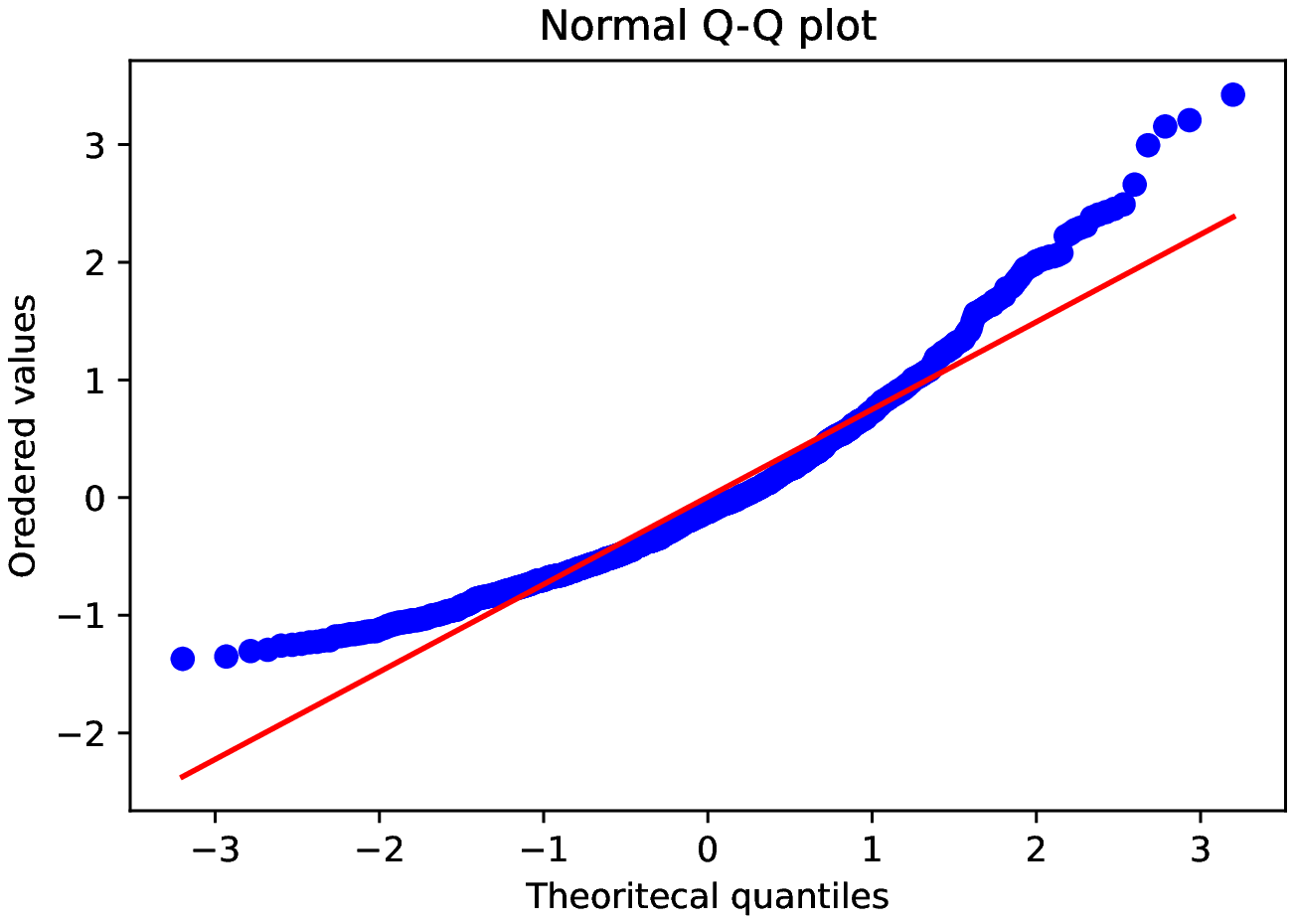}
  
  \end{minipage}
  \caption{Histogram and density function of a standard normal distribution (left) and Normal Q-Q plot (right) through $1000$ experiments in the case $H=0.75,\theta=1.0,\varepsilon=1.0, T=1.0, n=500$}
  \label{fig2}
\end{figure}

\begin{figure}[htbp]

  \begin{minipage}[b]{0.45\linewidth}
    \centering
    \includegraphics[keepaspectratio, scale=0.5]{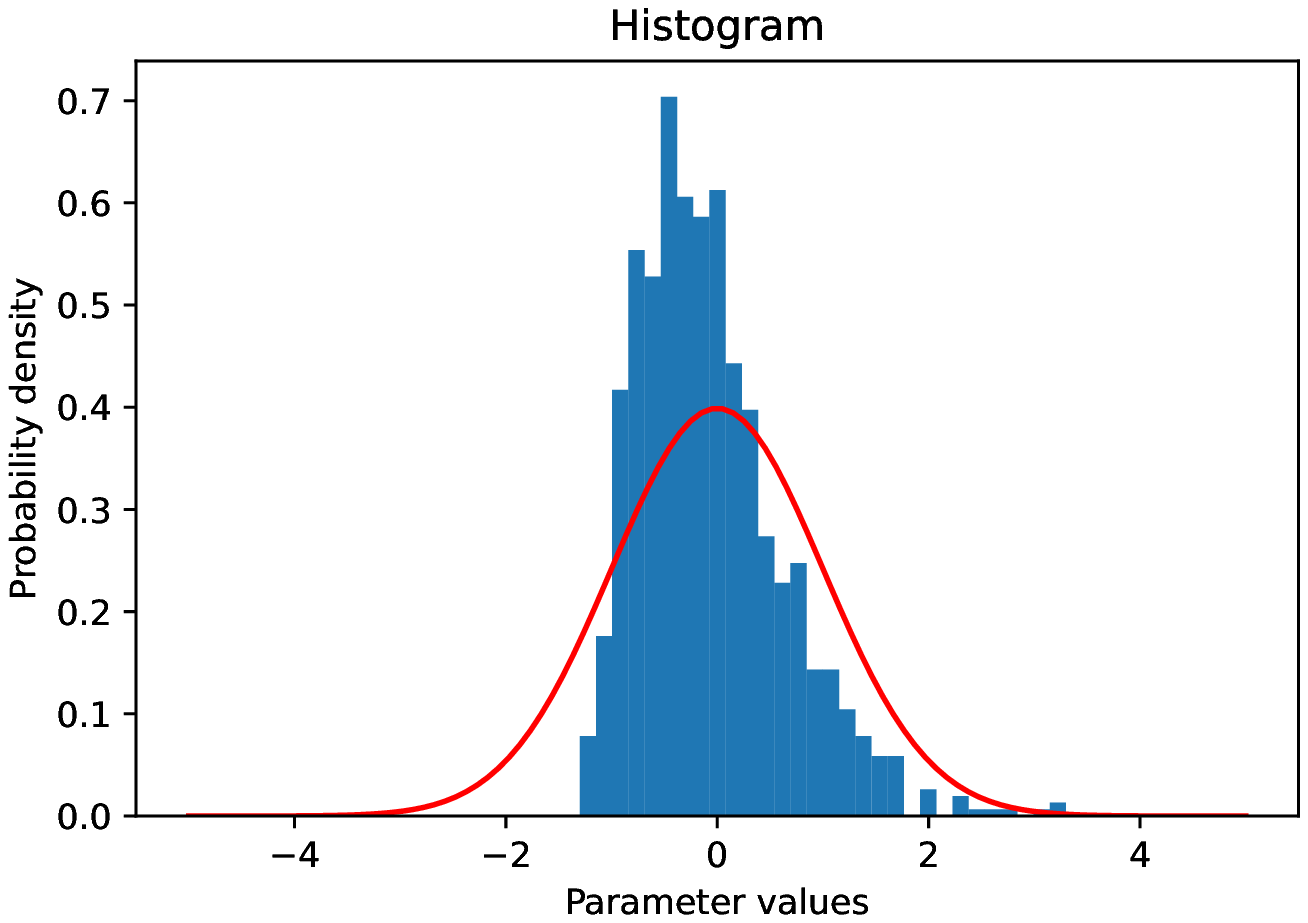}
   
  \end{minipage}
  \begin{minipage}[b]{0.45\linewidth}
    \centering
    \includegraphics[keepaspectratio, scale=0.5]{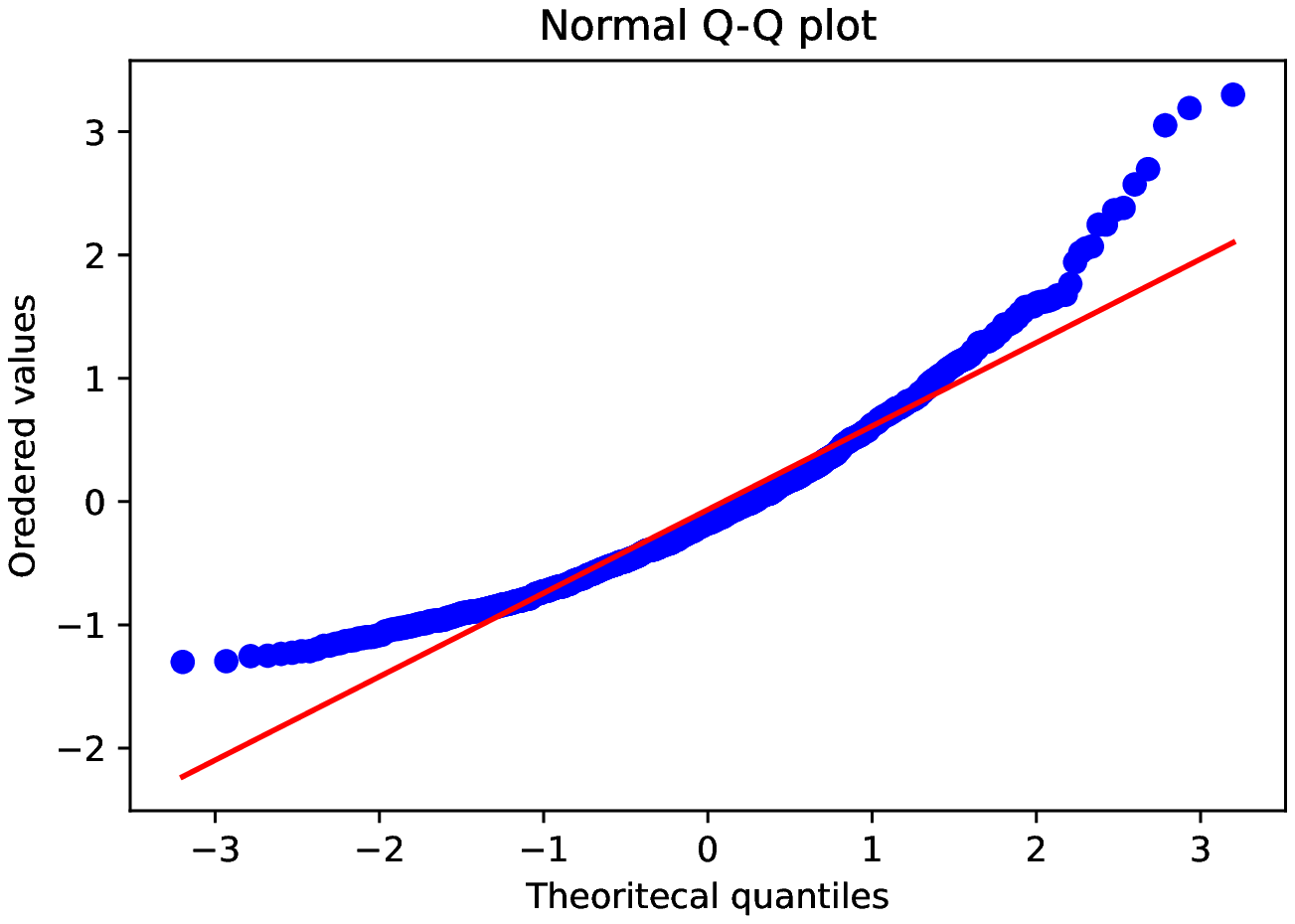}
   
  \end{minipage}
  \caption{Histogram and density function of a standard normal distribution (left) and Normal Q-Q plot (right) through $1000$ experiments in the case $H=0.75,\theta=1.0,\varepsilon=1.0, T=1.0, n=1000$}
  \label{fig3}
\end{figure}

\begin{figure}[htbp]
  \begin{minipage}[b]{0.45\linewidth}
    \centering
    \includegraphics[keepaspectratio, scale=0.5]{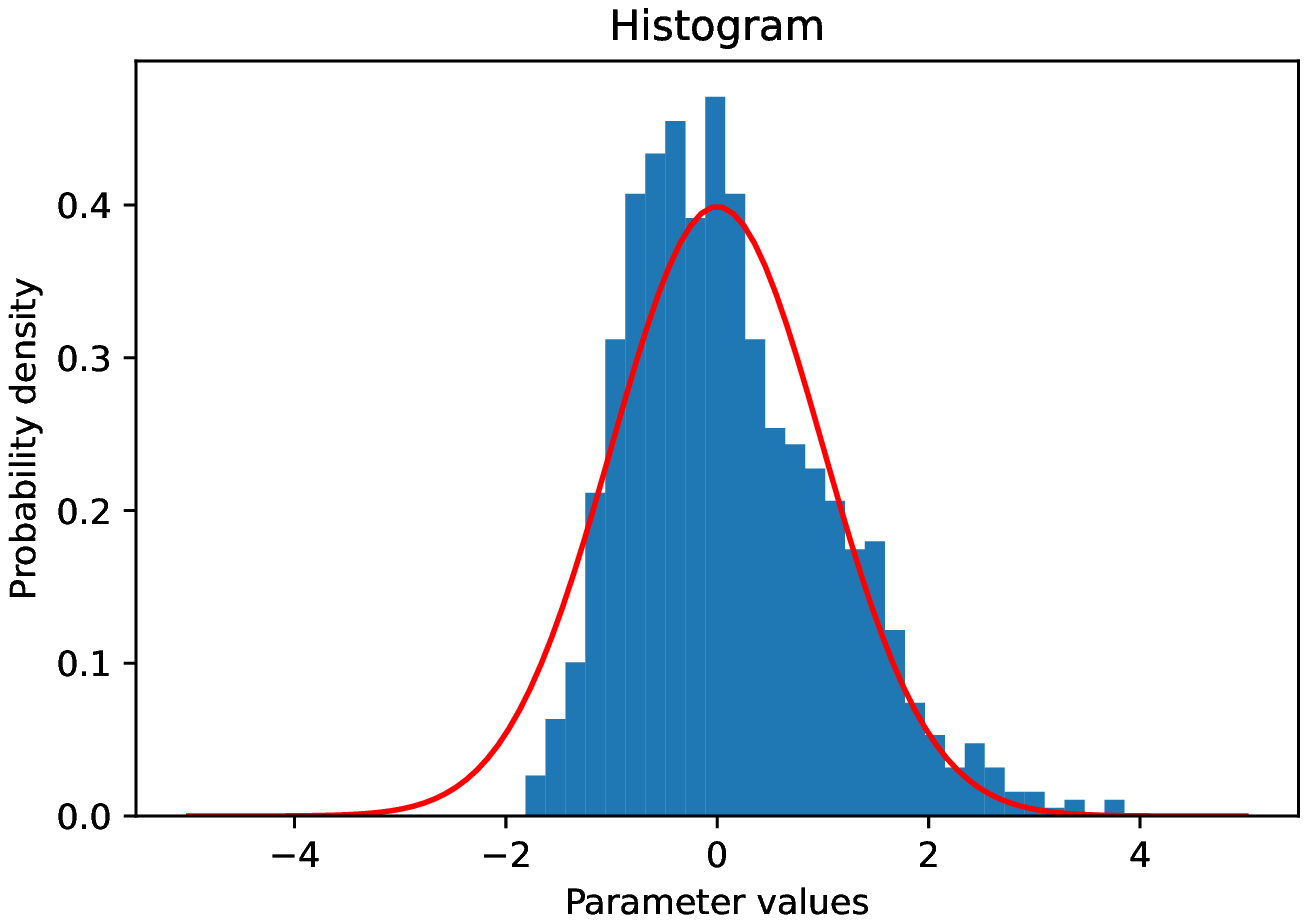}
   
  \end{minipage}
  \begin{minipage}[b]{0.45\linewidth}
    \centering
    \includegraphics[keepaspectratio, scale=0.5]{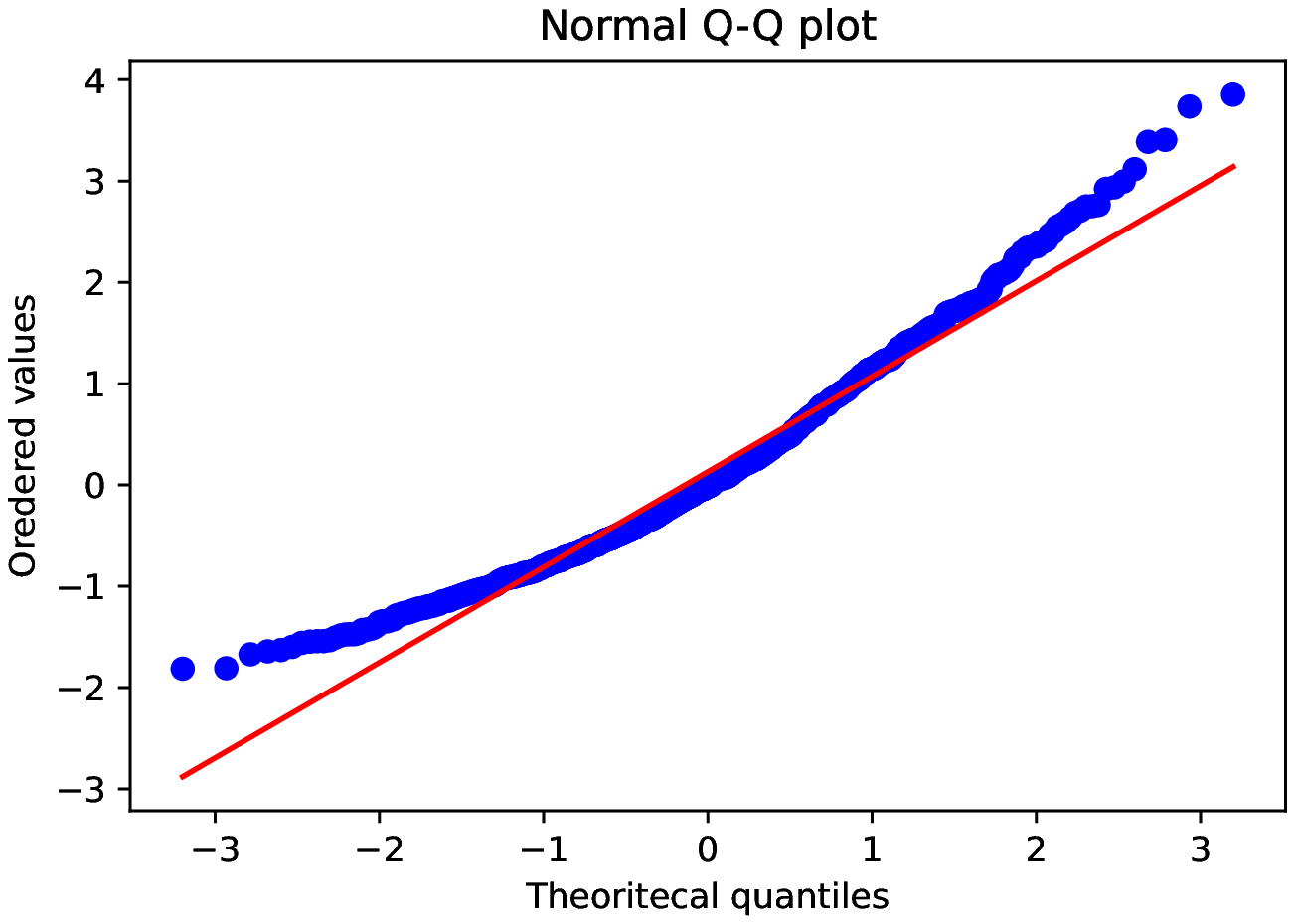}
   
  \end{minipage}
  \caption{Histogram and density function of a standard normal distribution (left) and Normal Q-Q plot (right) through $1000$ experiments in the case $H=0.75,\theta=1.0,\varepsilon=0.5, T=1.0, n=100$}
  \label{fig4}
\end{figure}

\begin{figure}[htbp]
  \begin{minipage}[b]{0.45\linewidth}
    \centering
    \includegraphics[keepaspectratio, scale=0.5]{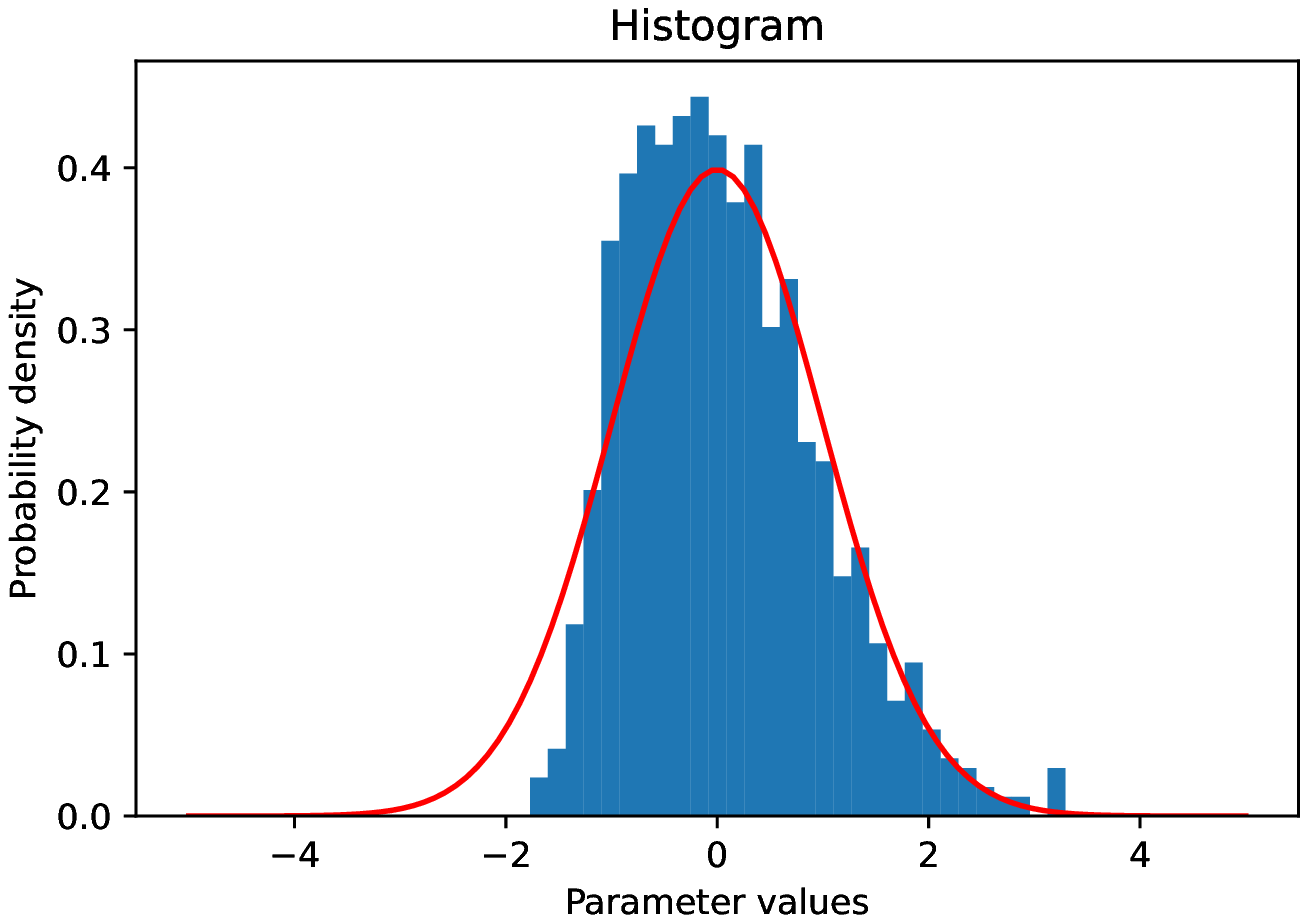}
   
  \end{minipage}
  \begin{minipage}[b]{0.45\linewidth}
    \centering
    \includegraphics[keepaspectratio, scale=0.5]{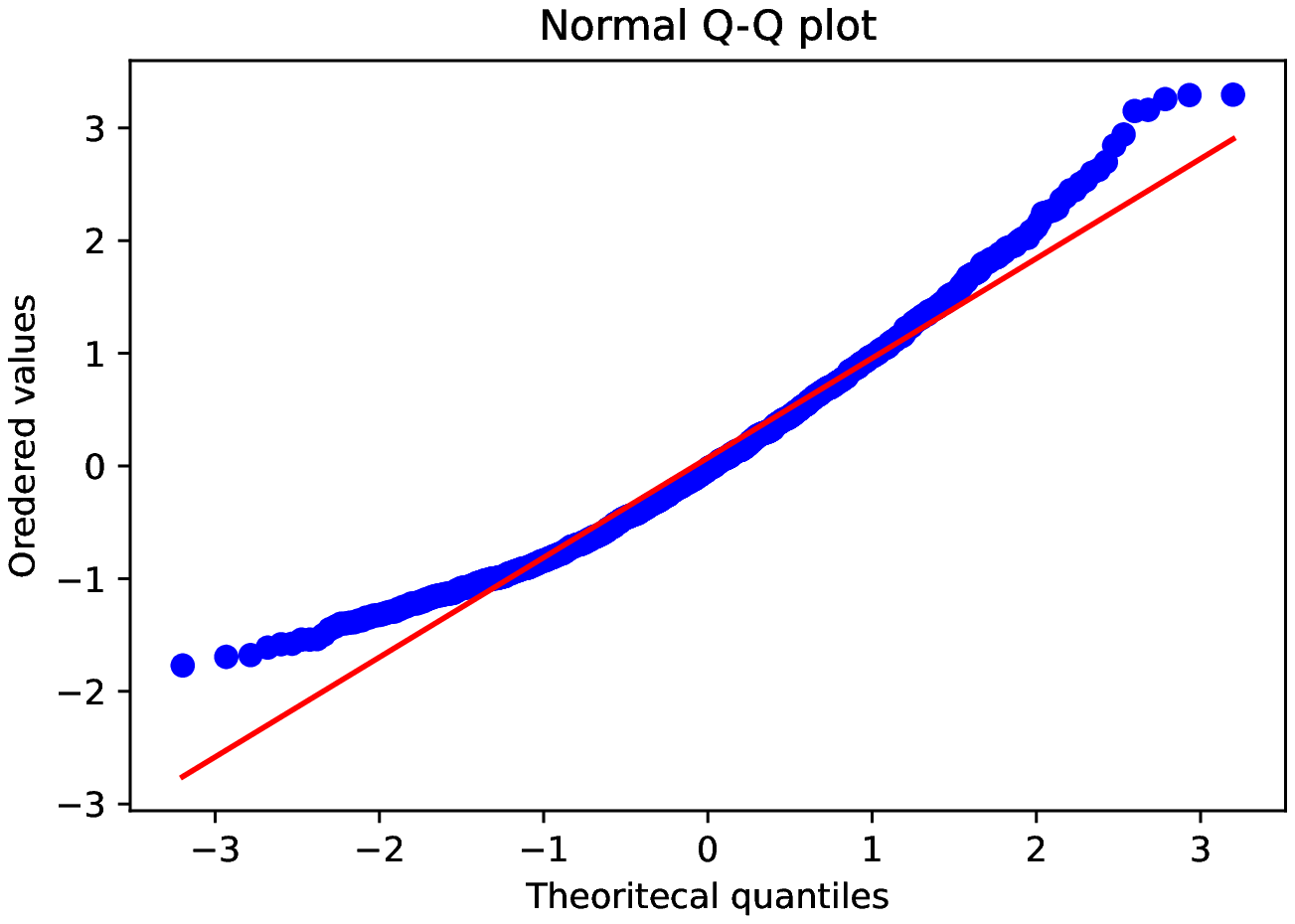}
   
  \end{minipage}
  \caption{Histogram and density function of a standard normal distribution (left) and Normal Q-Q plot (right) through $1000$ experiments in the case $H=0.75,\theta=1.0, \varepsilon=0.5, T=1.0, n=500$}
  \label{fig5}
\end{figure}

\begin{figure}[htbp]
  \begin{minipage}[b]{0.45\linewidth}
    \centering
    \includegraphics[keepaspectratio, scale=0.5]{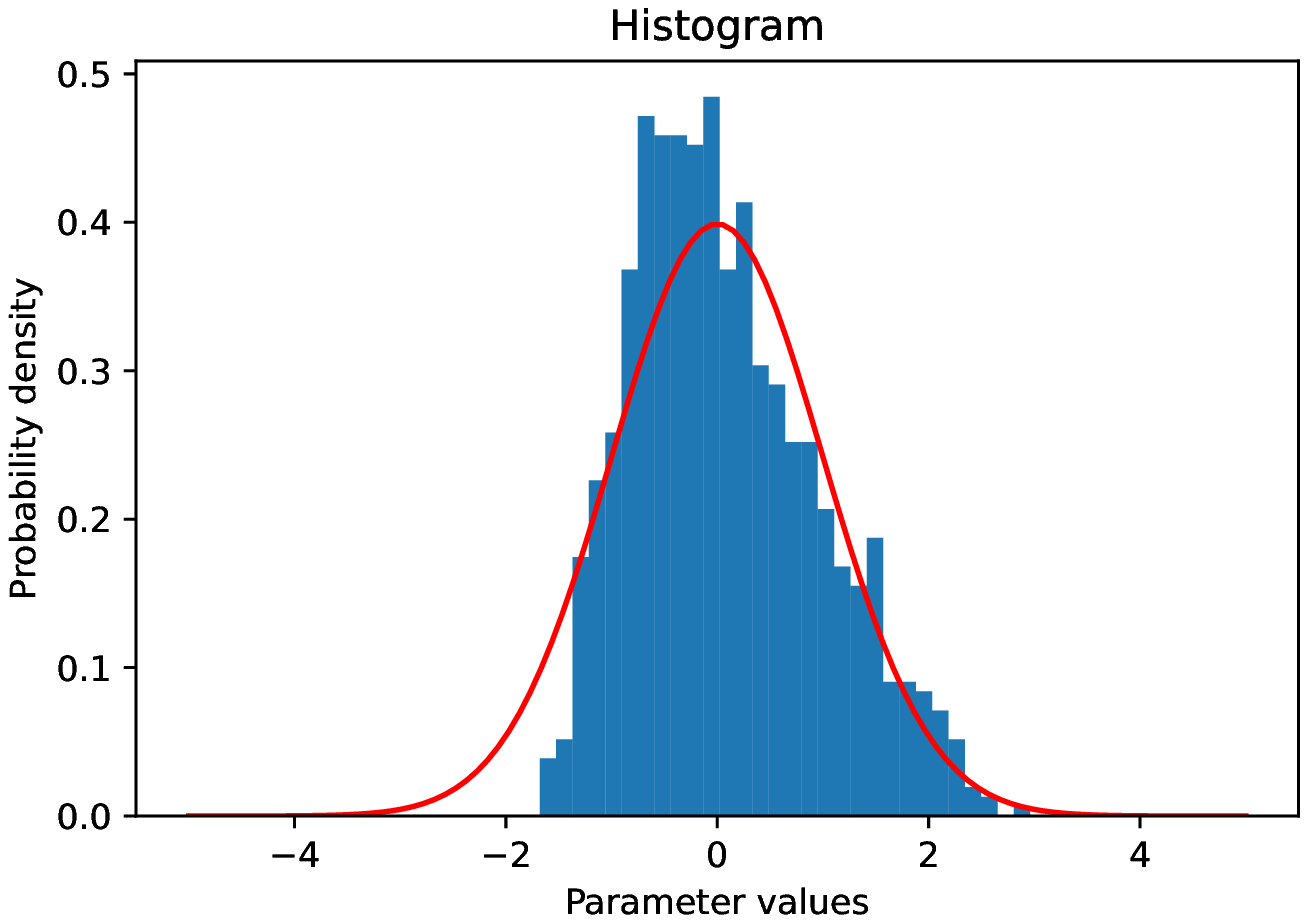}
   
  \end{minipage}
  \begin{minipage}[b]{0.45\linewidth}
    \centering
    \includegraphics[keepaspectratio, scale=0.5]{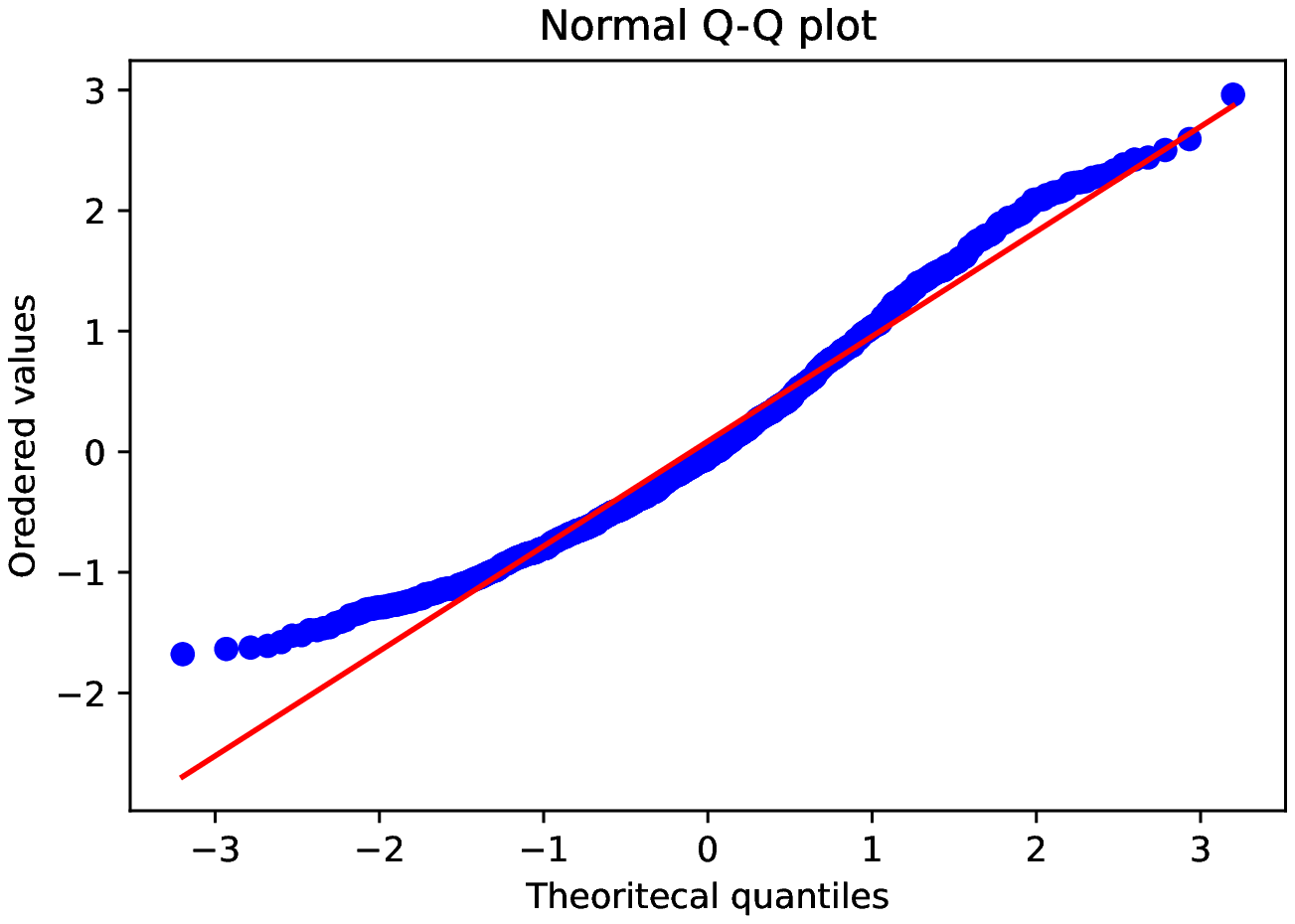}
   
  \end{minipage}
  \caption{Histogram and density function of a standard normal distribution (left) and Normal Q-Q plot (right) through $1000$ experiments in the case $H=0.75,\theta=1.0,\varepsilon=0.5, T=1.0, n=1000$}
  \label{fig6}
\end{figure}

\begin{figure}[htbp]
  \begin{minipage}[b]{0.45\linewidth}
    \centering
    \includegraphics[keepaspectratio, scale=0.5]{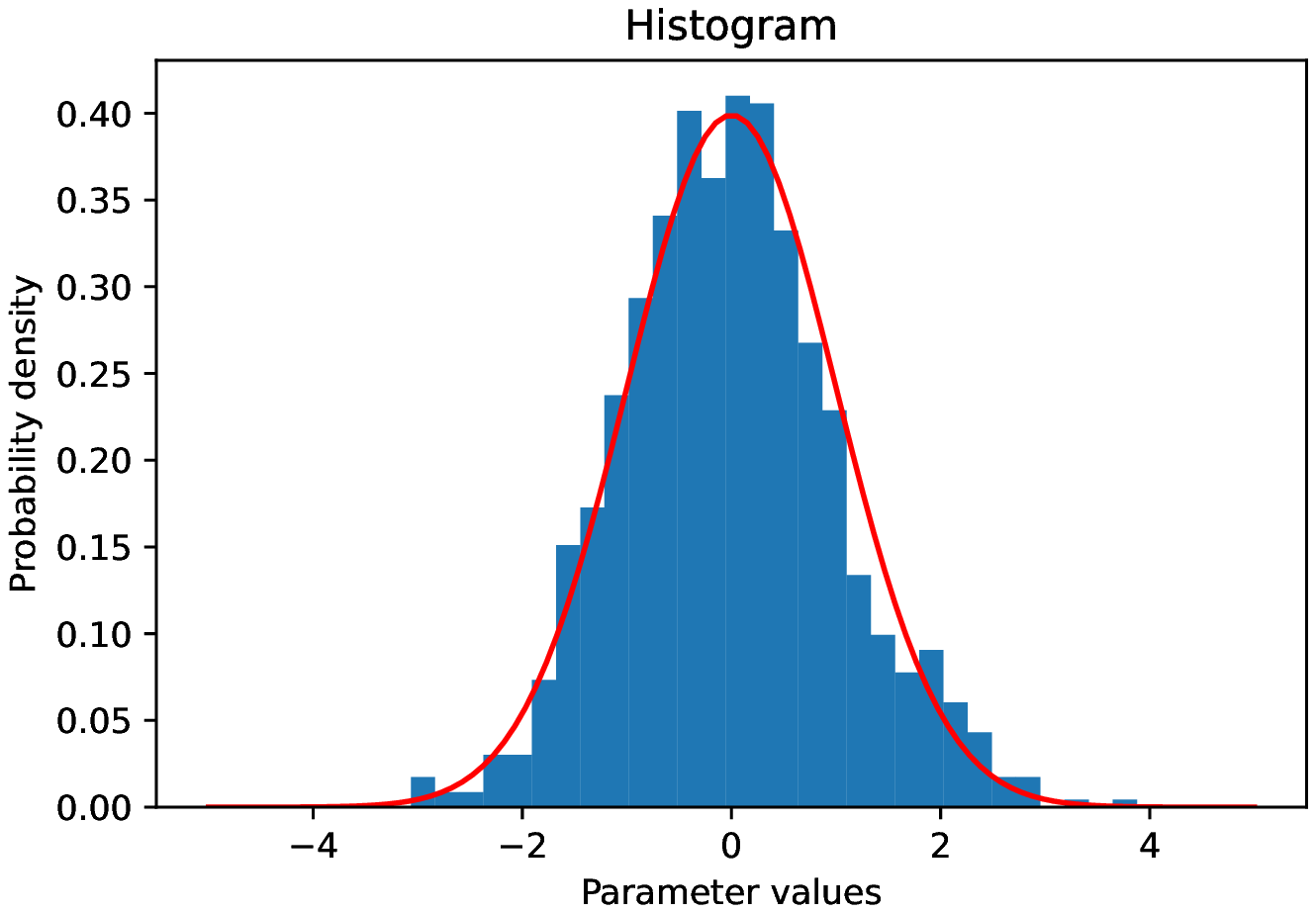}
   
  \end{minipage}
  \begin{minipage}[b]{0.45\linewidth}
    \centering
    \includegraphics[keepaspectratio, scale=0.5]{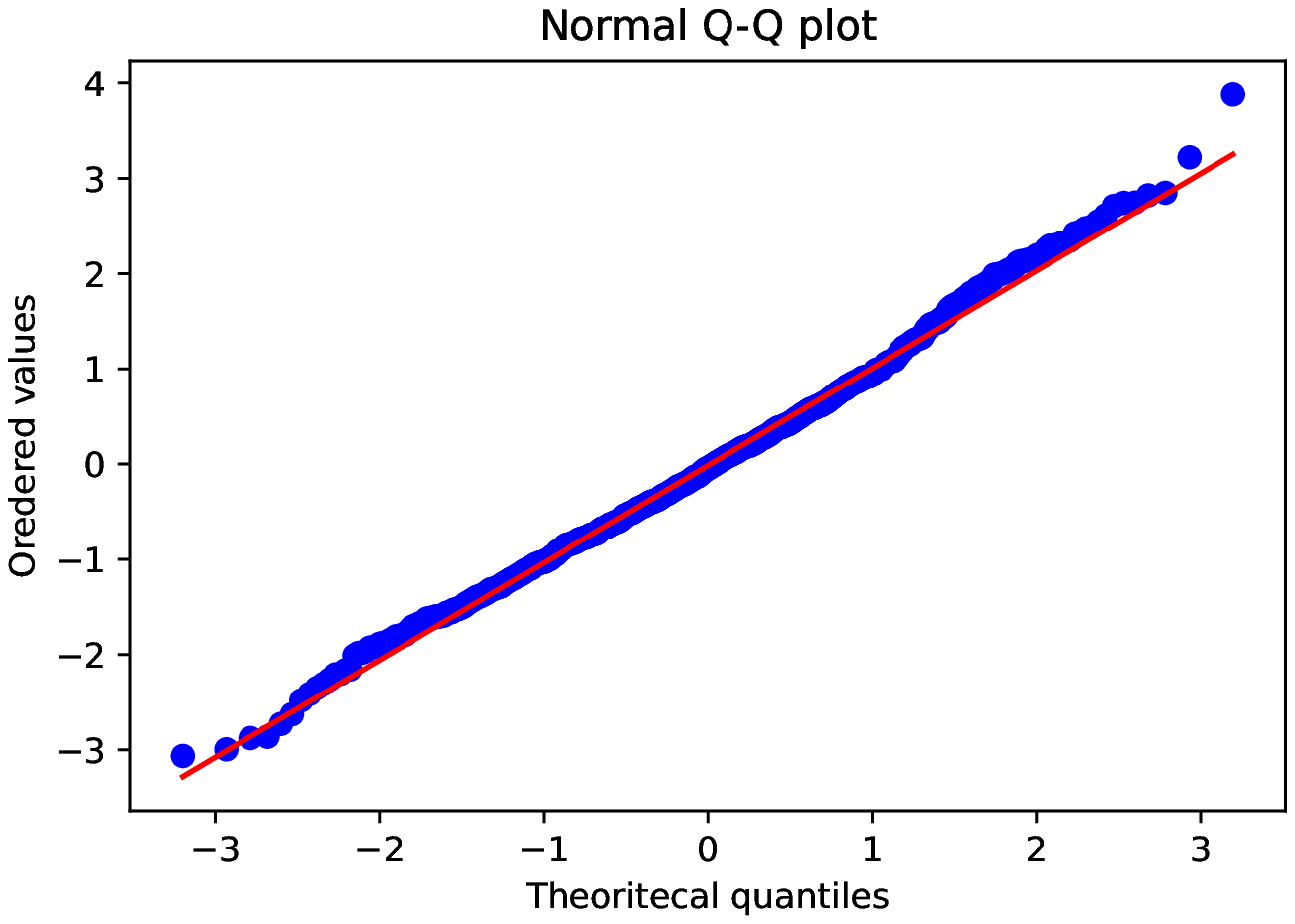}
   
  \end{minipage}
  \caption{Histogram and density function of a standard normal distribution (left) and Normal Q-Q plot (right) through $1000$ experiments in the case $H=0.75,\theta=1.0,\varepsilon=0.1, T=1.0, n=100$}
  \label{fig7}
\end{figure}

\begin{figure}[htbp]
  \begin{minipage}[b]{0.45\linewidth}
    \centering
    \includegraphics[keepaspectratio, scale=0.5]{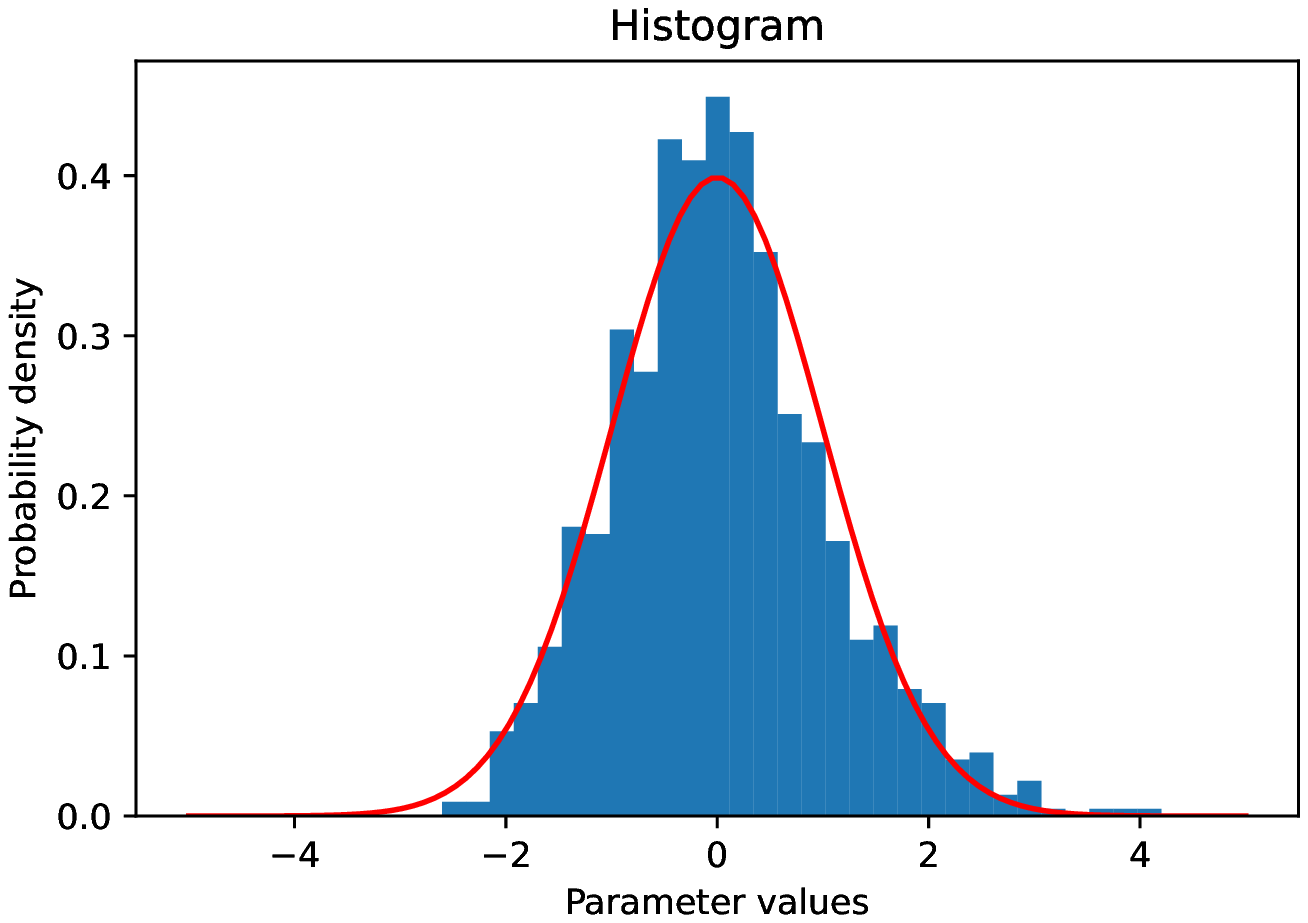}
   
  \end{minipage}
  \begin{minipage}[b]{0.45\linewidth}
    \centering
    \includegraphics[keepaspectratio, scale=0.5]{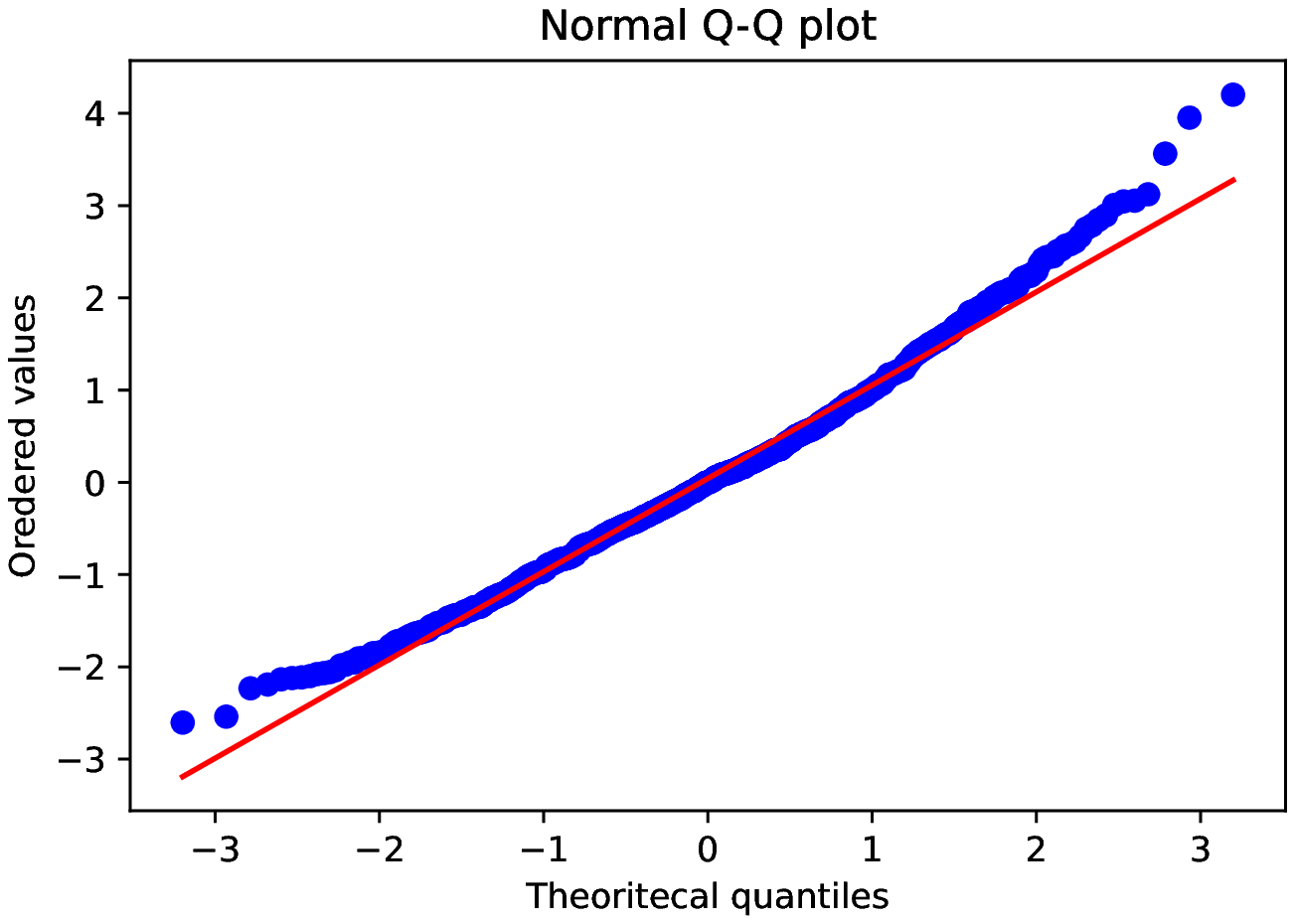}
   
  \end{minipage}
  \caption{Histogram and density function of a standard normal distribution (left) and Normal Q-Q plot (right) through $1000$ experiments in the case $H=0.75,\theta=1.0,\varepsilon=0.1, T=1.0, n=500$}
  \label{fig8}
\end{figure}

\begin{figure}[htbp]
  \begin{minipage}[b]{0.45\linewidth}
    \centering
    \includegraphics[keepaspectratio, scale=0.5]{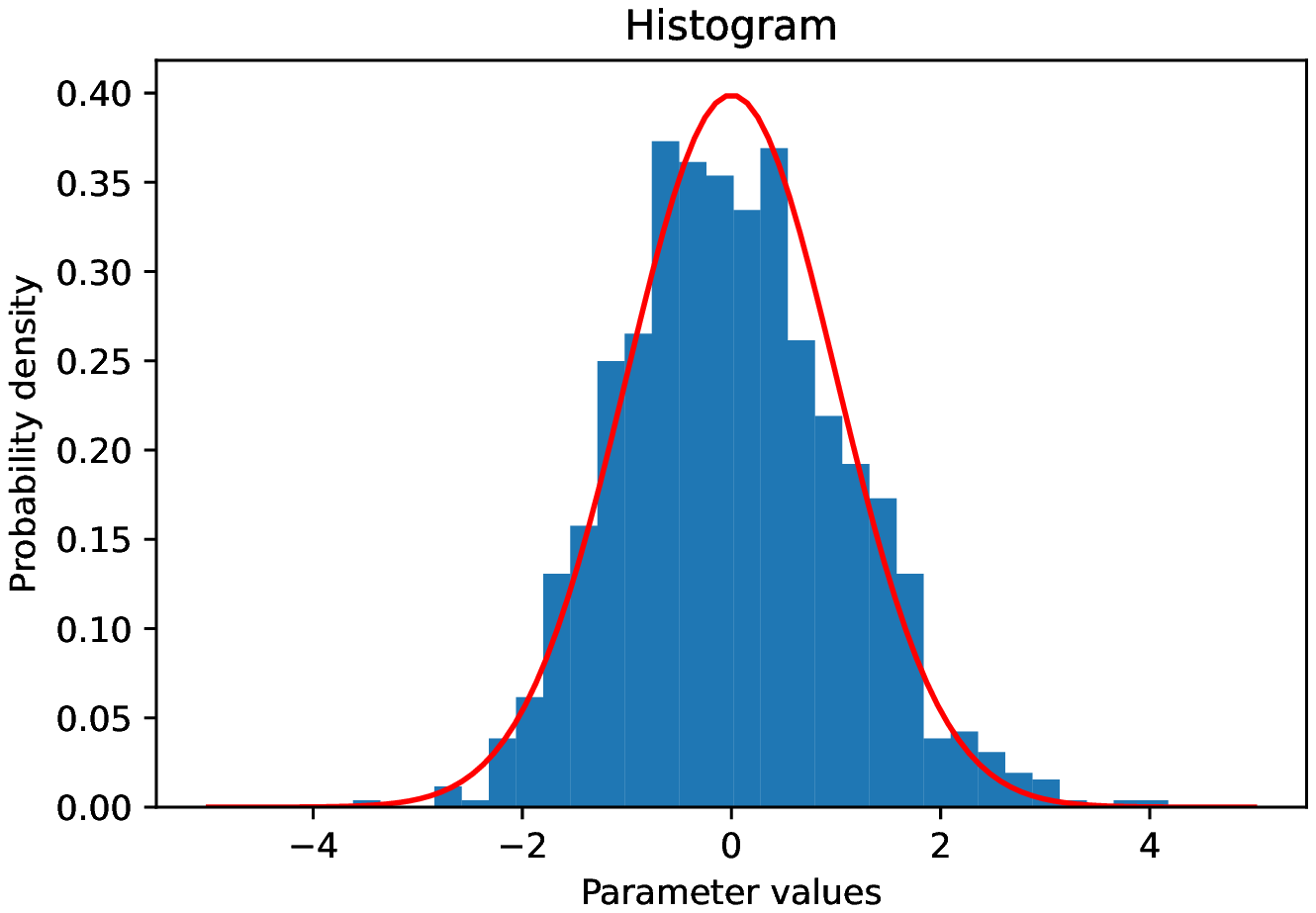}
   
  \end{minipage}
  \begin{minipage}[b]{0.45\linewidth}
    \centering
    \includegraphics[keepaspectratio, scale=0.5]{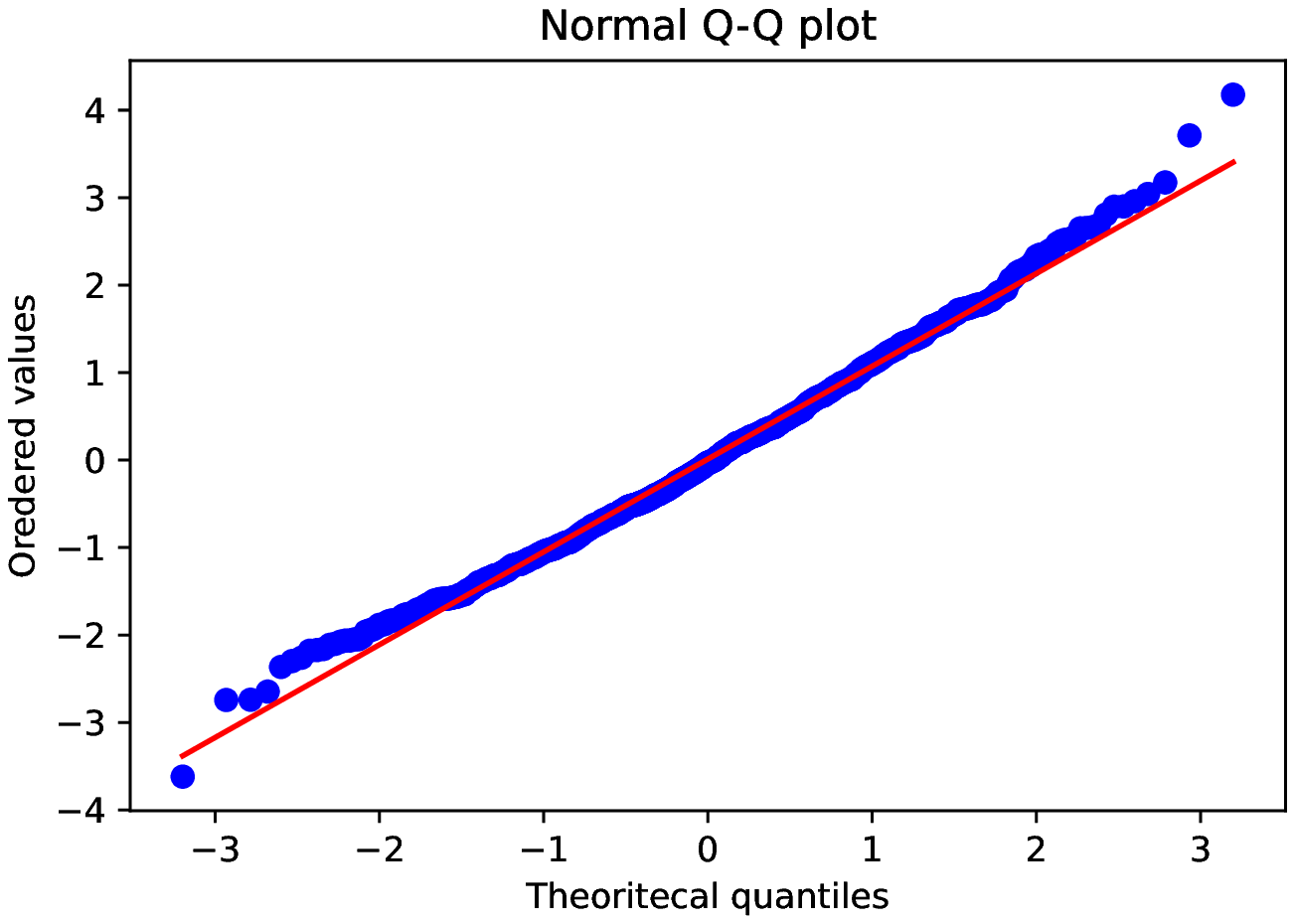}
   
  \end{minipage}
  \caption{Histogram and density function of a standard normal distribution (left) and Normal Q-Q plot (right) through $1000$ experiments in the case $H=0.75,\theta=1.0,\varepsilon=0.1, T=1.0, n=1000$}
  \label{fig9}
\end{figure}

\begin{figure}[htbp]
  \begin{minipage}[b]{0.45\linewidth}
    \centering
    \includegraphics[keepaspectratio, scale=0.5]{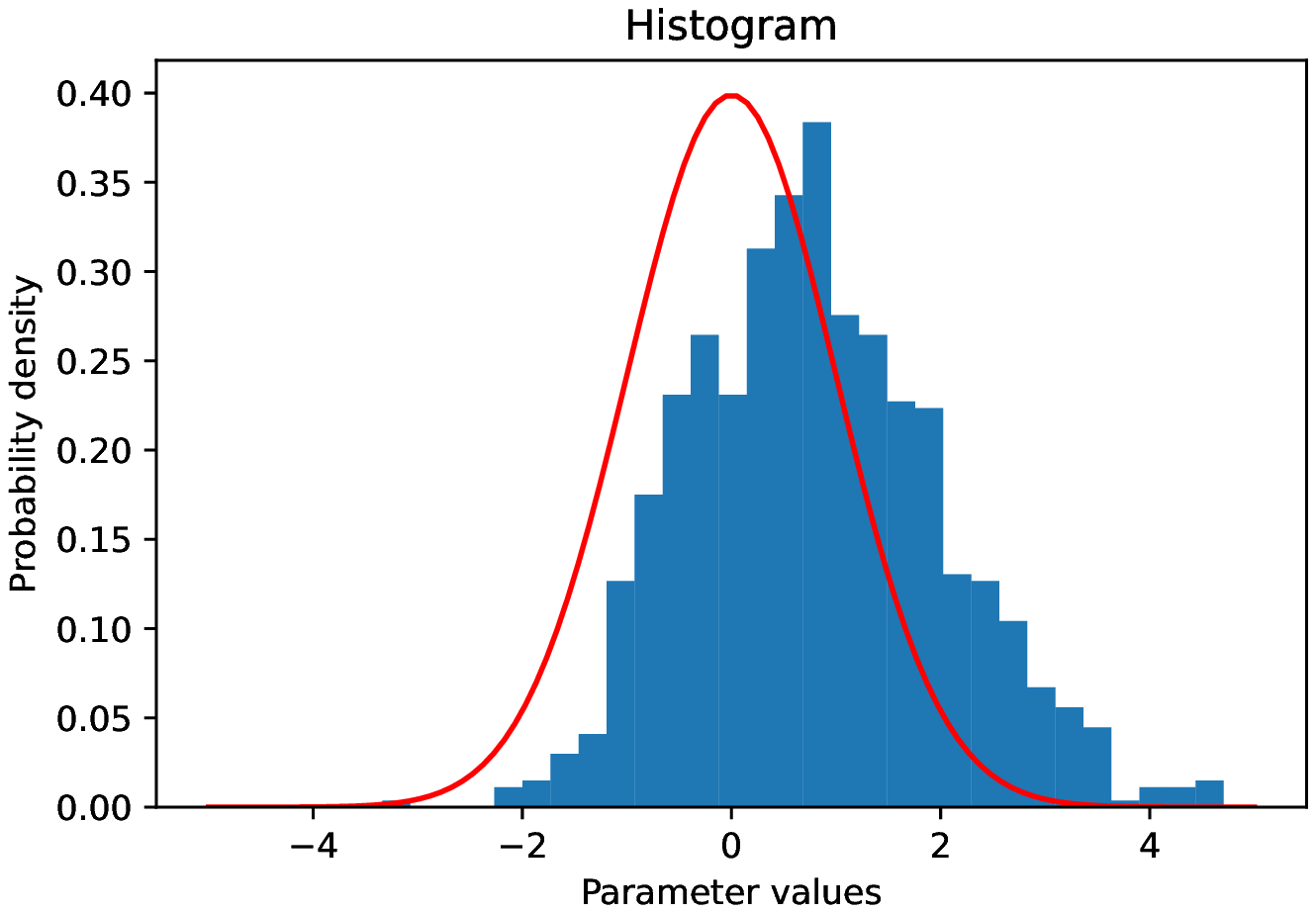}
   
  \end{minipage}
  \begin{minipage}[b]{0.45\linewidth}
    \centering
    \includegraphics[keepaspectratio, scale=0.5]{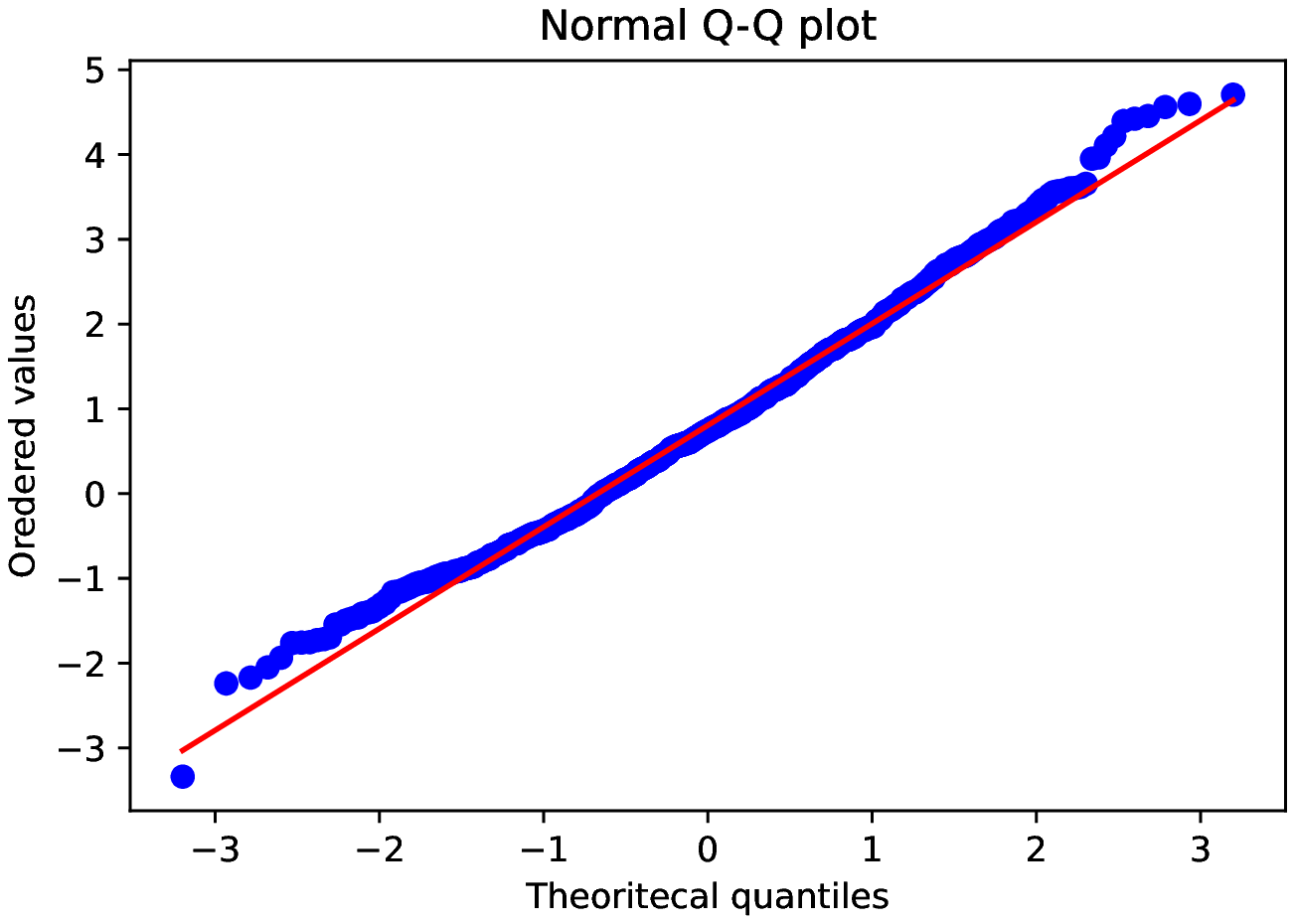}
   
  \end{minipage}
  \caption{Histogram and density function of a standard normal distribution (left) and Normal Q-Q plot (right) through $1000$ experiments in the case $H=0.25,\theta=1.0,\varepsilon=0.1, T=1.0, n=100$}
  \label{fig10}
\end{figure}

\begin{figure}[htbp]
  \begin{minipage}[b]{0.45\linewidth}
    \centering
    \includegraphics[keepaspectratio, scale=0.5]{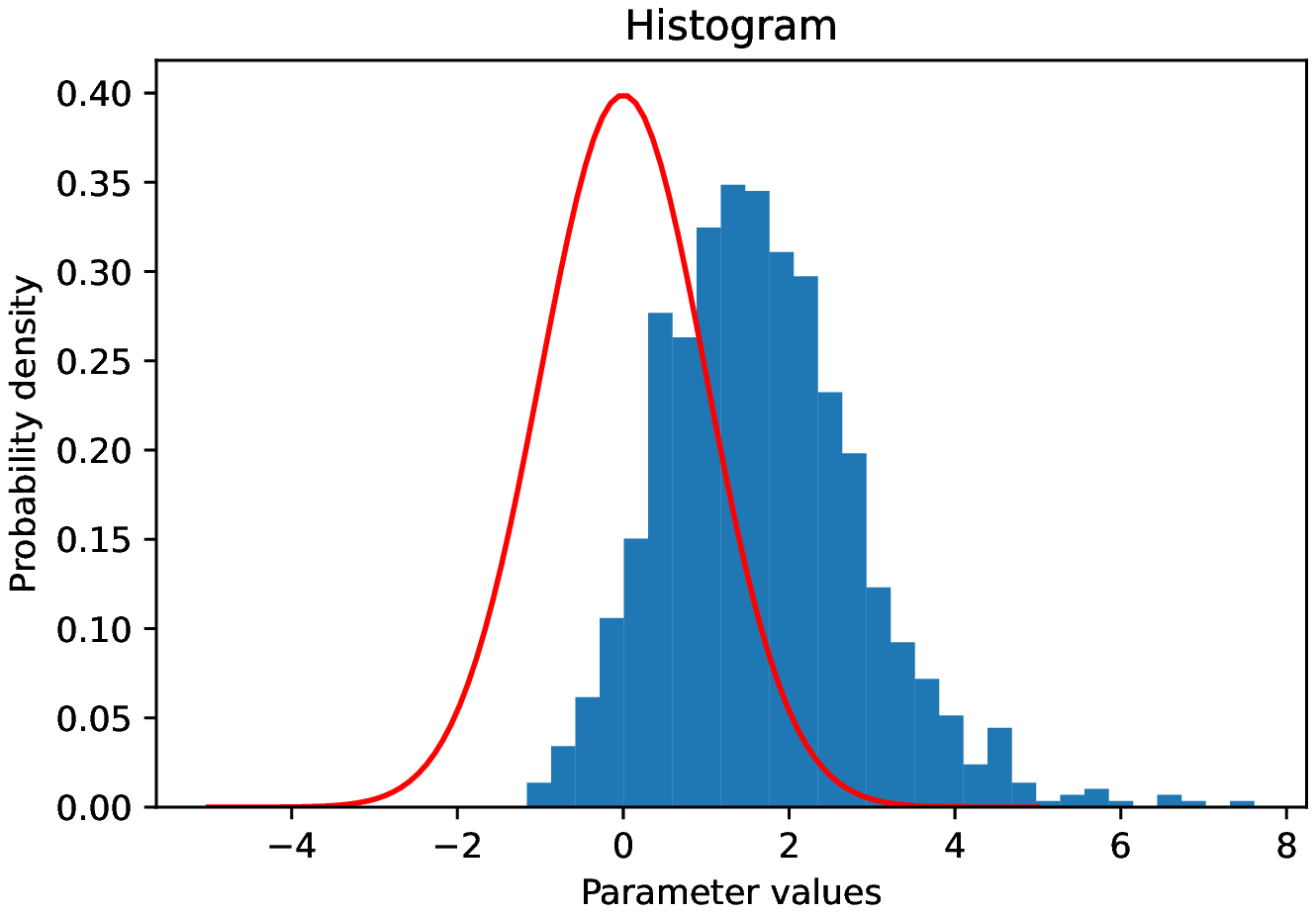}
   
  \end{minipage}
  \begin{minipage}[b]{0.45\linewidth}
    \centering
    \includegraphics[keepaspectratio, scale=0.5]{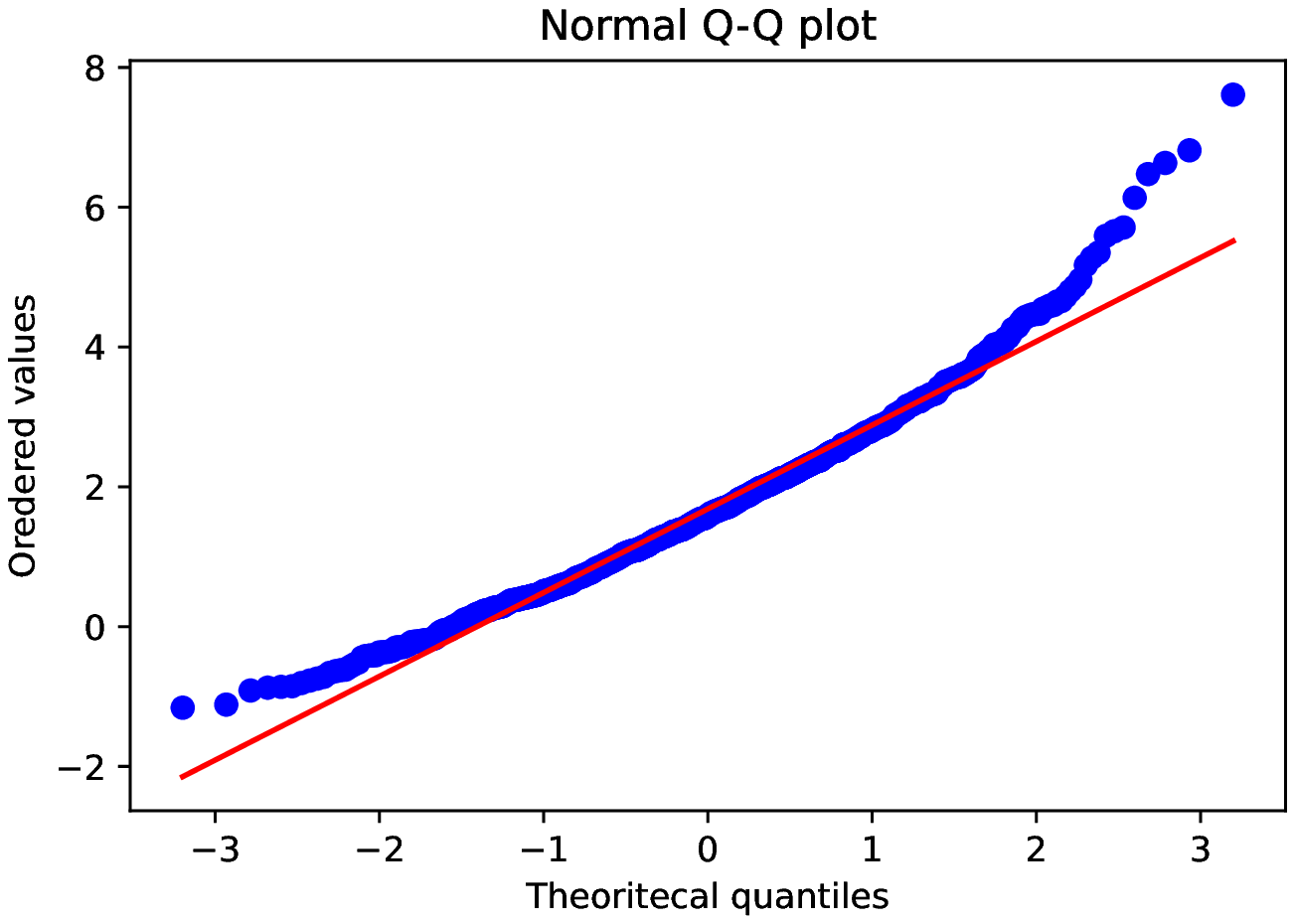}
   
  \end{minipage}
  \caption{Histogram and density function of a standard normal distribution (left) and Normal Q-Q plot (right) through $1000$ experiments in the case  $H=0.25,\theta=1.0,\varepsilon=0.1, T=1.0, n=500$}
  \label{fig11}
\end{figure}

\begin{figure}[htbp]
  \begin{minipage}[b]{0.45\linewidth}
    \centering
    \includegraphics[keepaspectratio, scale=0.5]{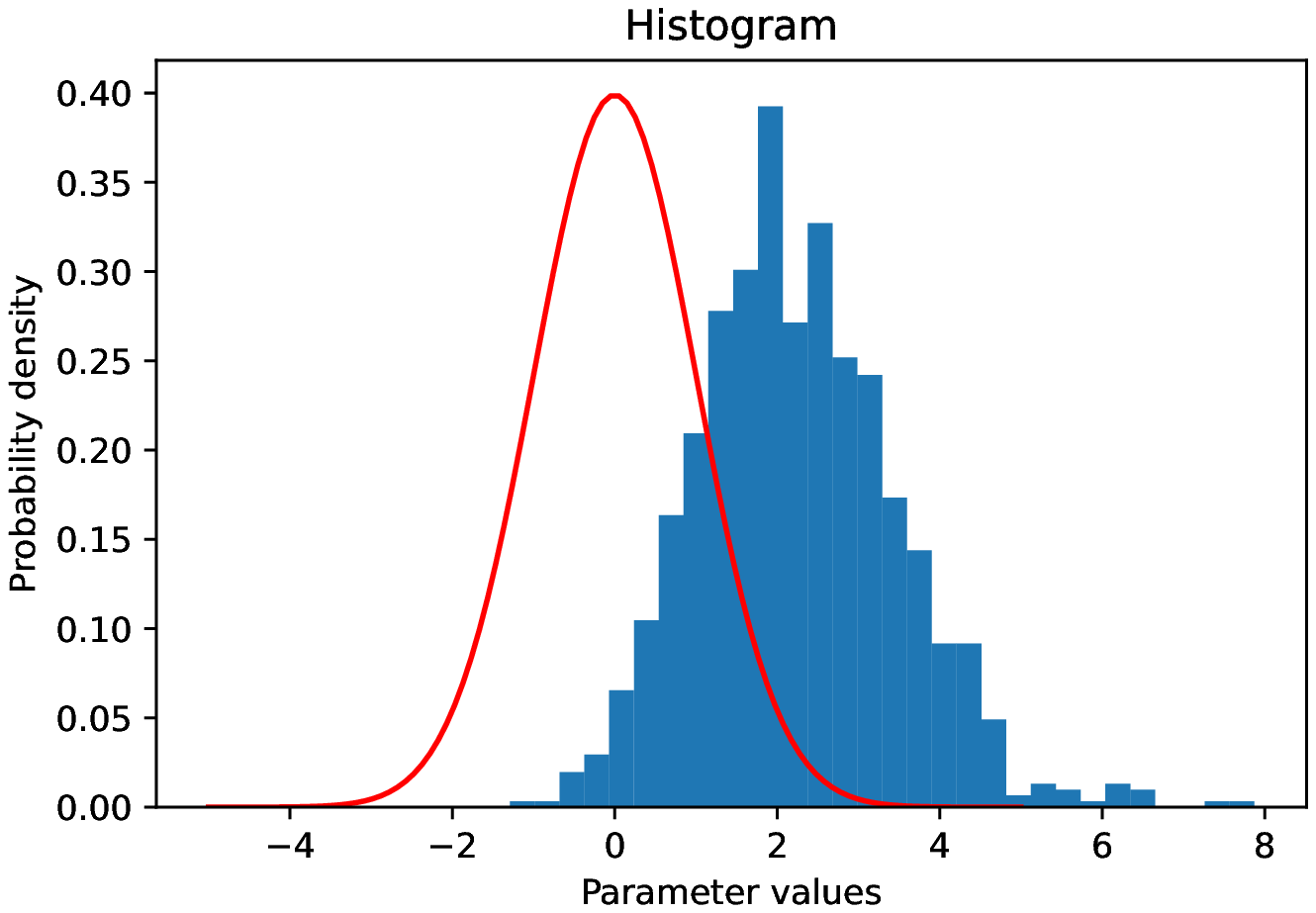}
   
  \end{minipage}
  \begin{minipage}[b]{0.45\linewidth}
    \centering
    \includegraphics[keepaspectratio, scale=0.5]{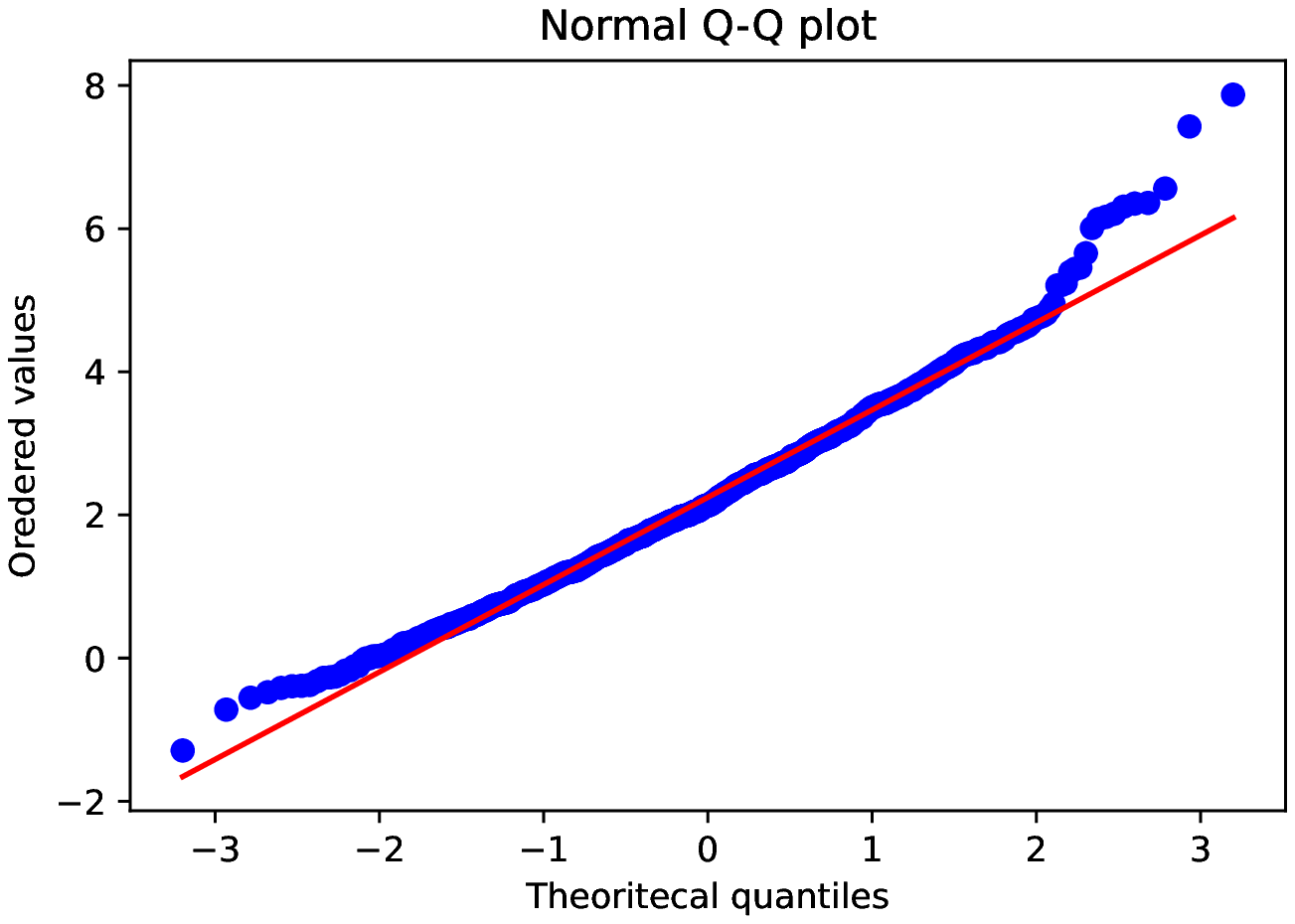}
   
  \end{minipage}
  \caption{Histogram and density function of a standard normal distribution (left) and Normal Q-Q plot (right) through $1000$ experiments in the case  $H=0.25,\theta=1.0,\varepsilon=0.1, T=1.0, n=1000$}
  \label{fig12}
\end{figure}

\begin{figure}[htbp]
  \begin{minipage}[b]{0.45\linewidth}
    \centering
    \includegraphics[keepaspectratio, scale=0.5]{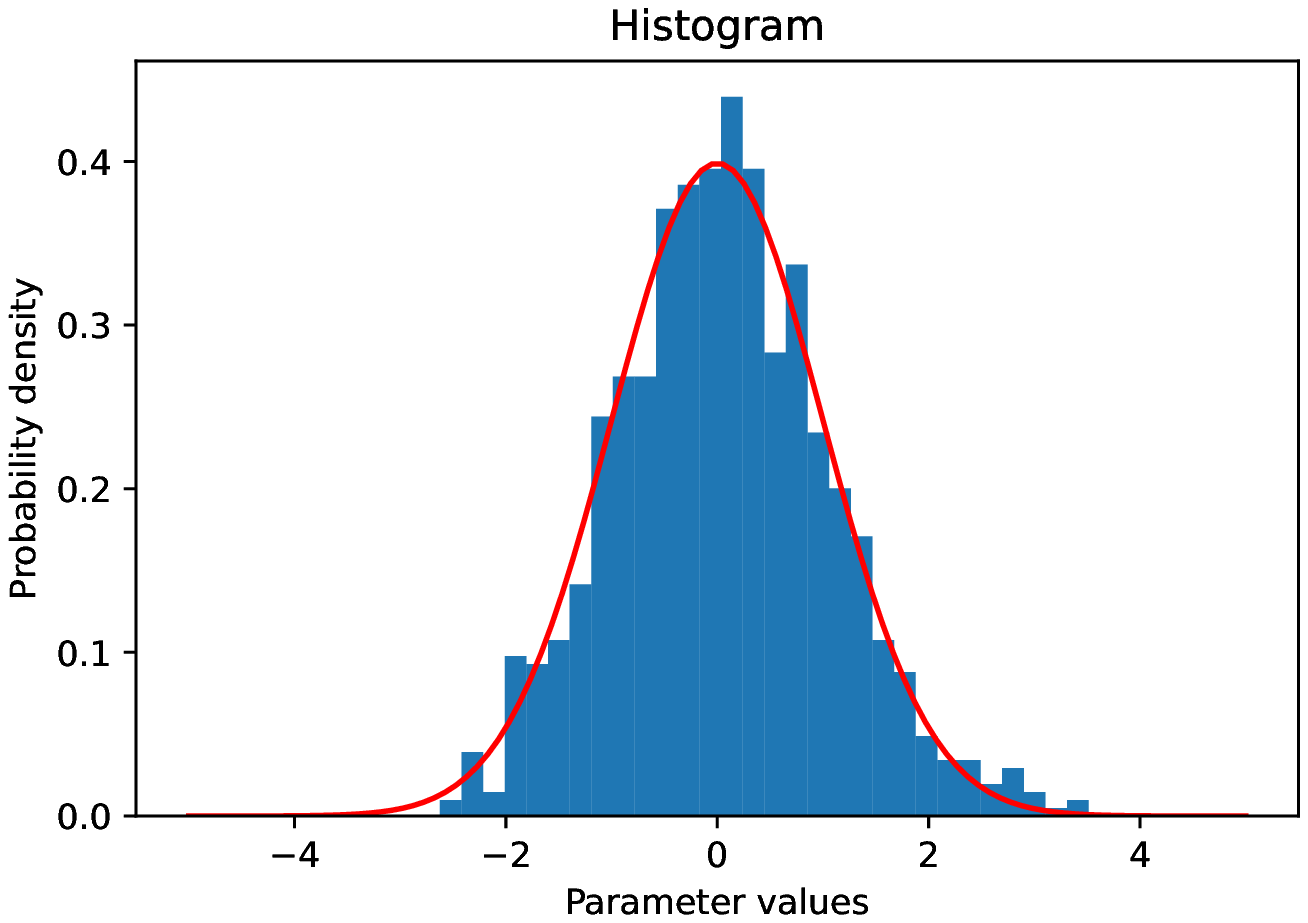}
   
  \end{minipage}
  \begin{minipage}[b]{0.45\linewidth}
    \centering
    \includegraphics[keepaspectratio, scale=0.5]{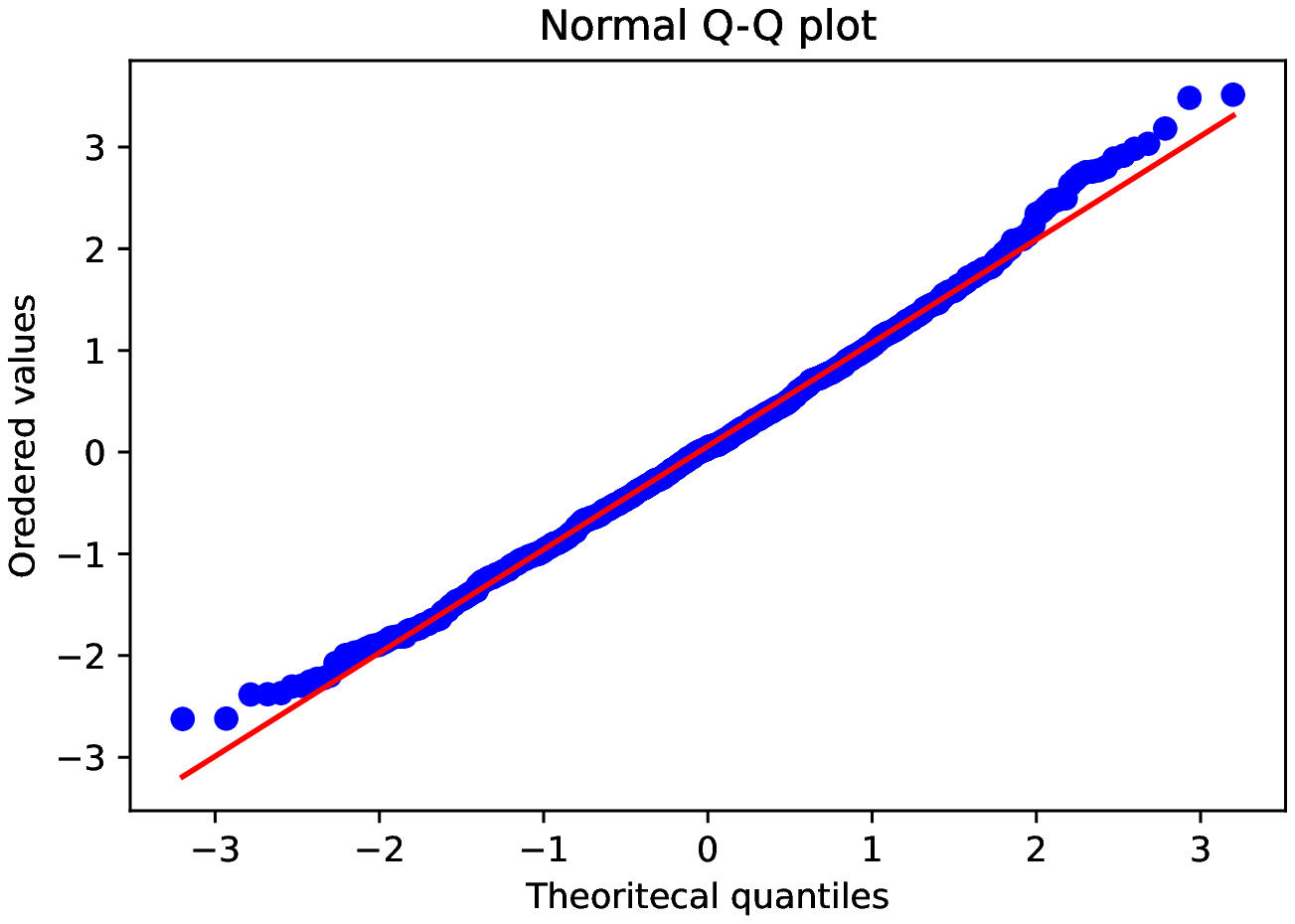}
   
  \end{minipage}
  \caption{Histogram and density function of a standard normal distribution (left) and Normal Q-Q plot (right) through $1000$ experiments in the case $H=0.25,\theta=1.0,\varepsilon=0.01, T=1.0, n=100$}
  \label{fig13}
\end{figure}

\begin{figure}[htbp]
  \begin{minipage}[b]{0.45\linewidth}
    \centering
    \includegraphics[keepaspectratio, scale=0.5]{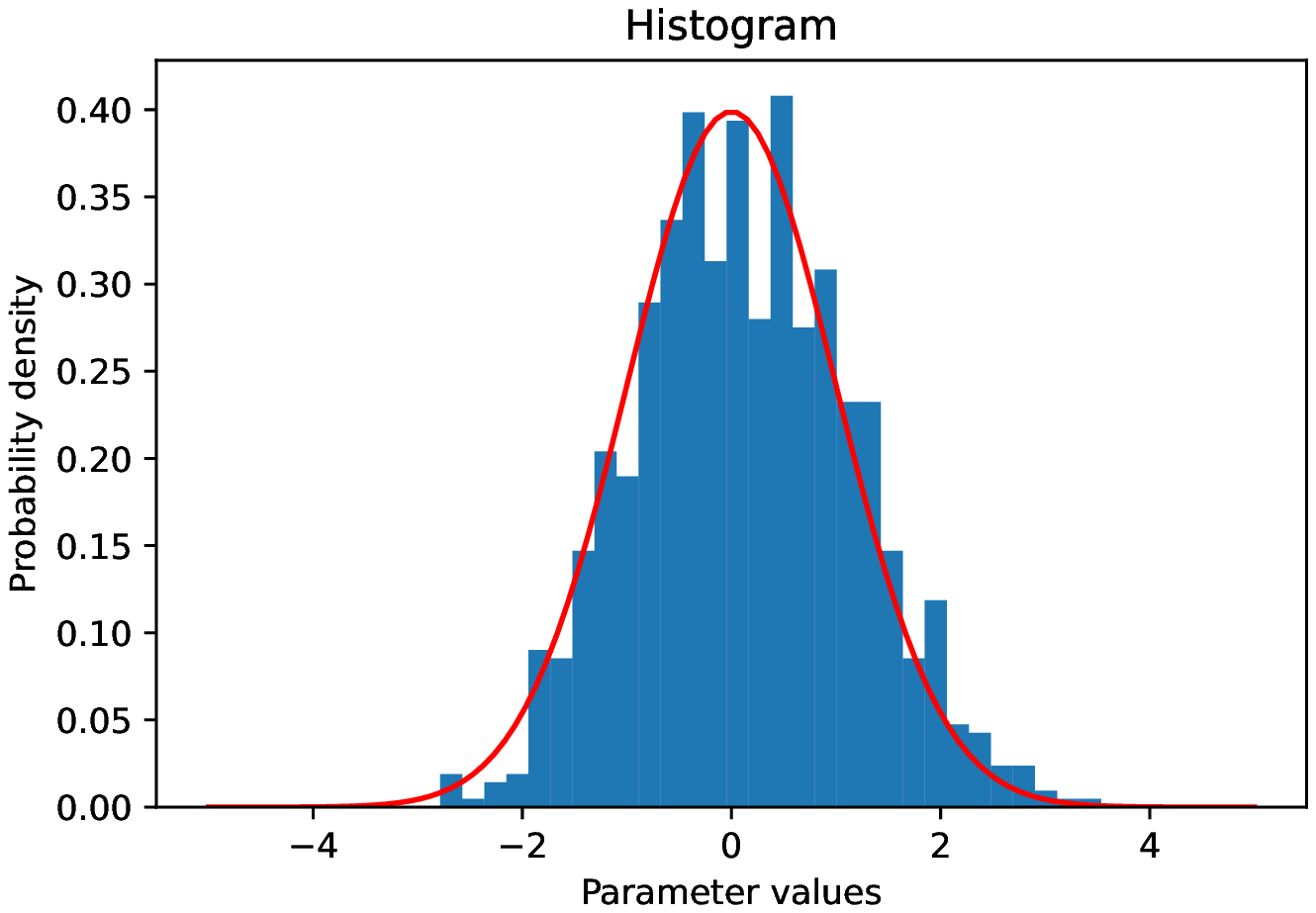}
   
  \end{minipage}
  \begin{minipage}[b]{0.45\linewidth}
    \centering
    \includegraphics[keepaspectratio, scale=0.5]{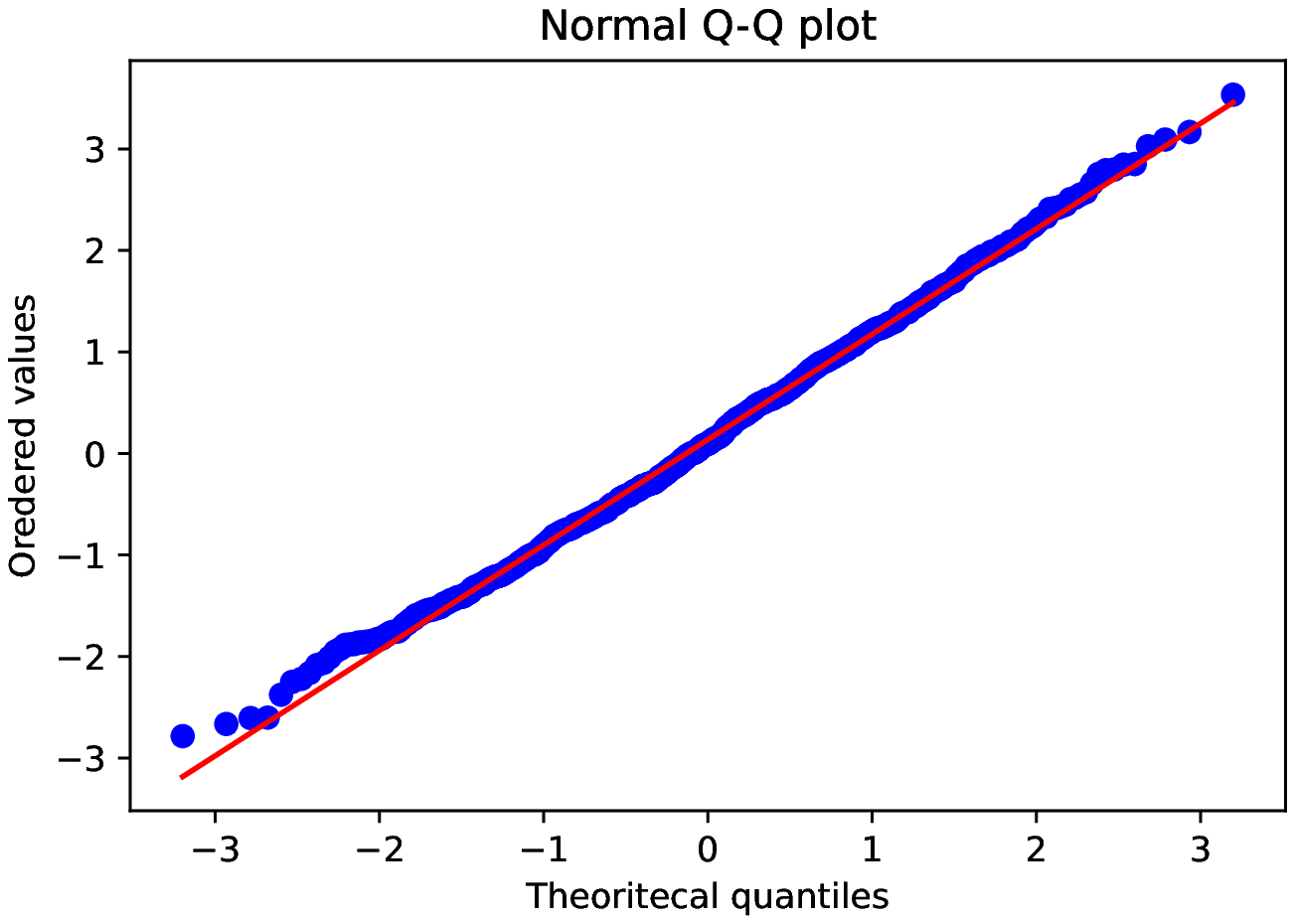}   
  \end{minipage}
  \caption{Histogram and density function of a standard normal distribution (left) and Normal Q-Q plot (right) through $1000$ experiments in the case $H=0.25,\theta=1.0,\varepsilon=0.01, T=1.0, n=500$}
  \label{fig14}
\end{figure}

\begin{figure}[htbp]
  \begin{minipage}[b]{0.45\linewidth}
    \centering
    \includegraphics[keepaspectratio, scale=0.5]{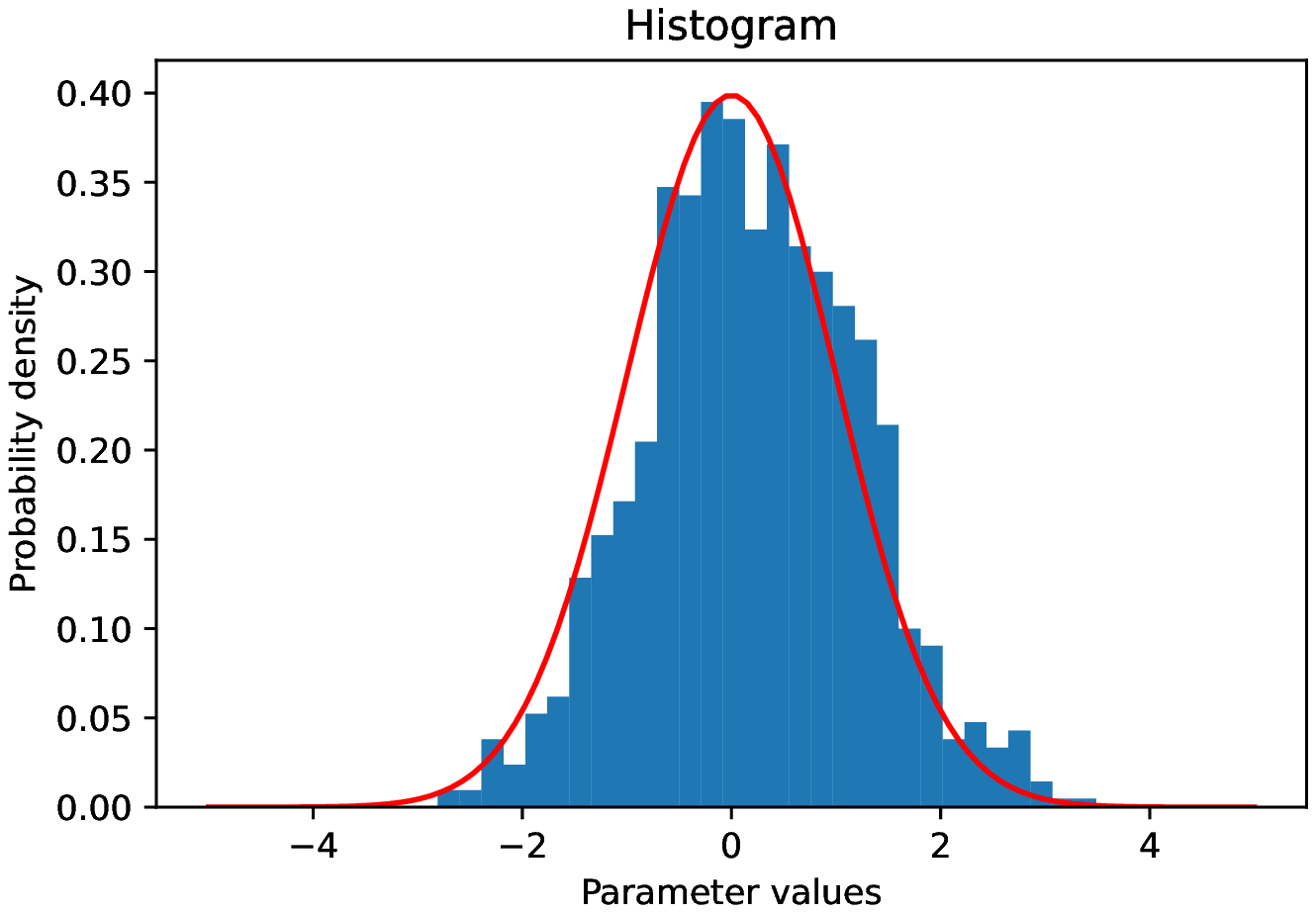}
   
  \end{minipage}
  \begin{minipage}[b]{0.45\linewidth}
    \centering
    \includegraphics[keepaspectratio, scale=0.5]{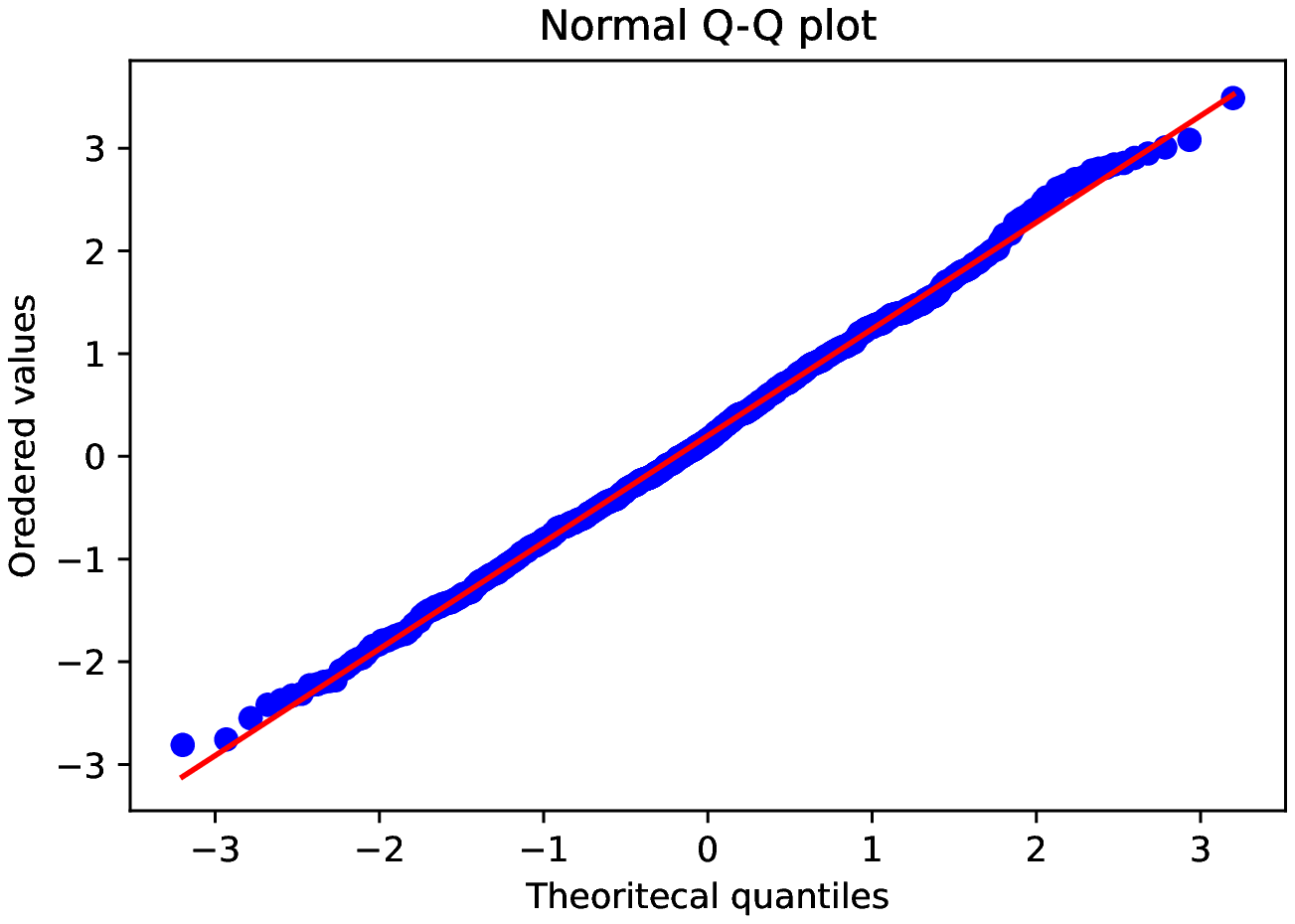}
   
  \end{minipage}
  \caption{Histogram and density function of a standard normal distribution (left) and Normal Q-Q plot (right) through $1000$ experiments in the case $H=0.25,\theta=1.0,\varepsilon=0.01, T=1.0, n=1000$}
  \label{fig15}
\end{figure}

\begin{figure}[htbp]
  \begin{minipage}[b]{0.45\linewidth}
    \centering
    \includegraphics[keepaspectratio, scale=0.5]{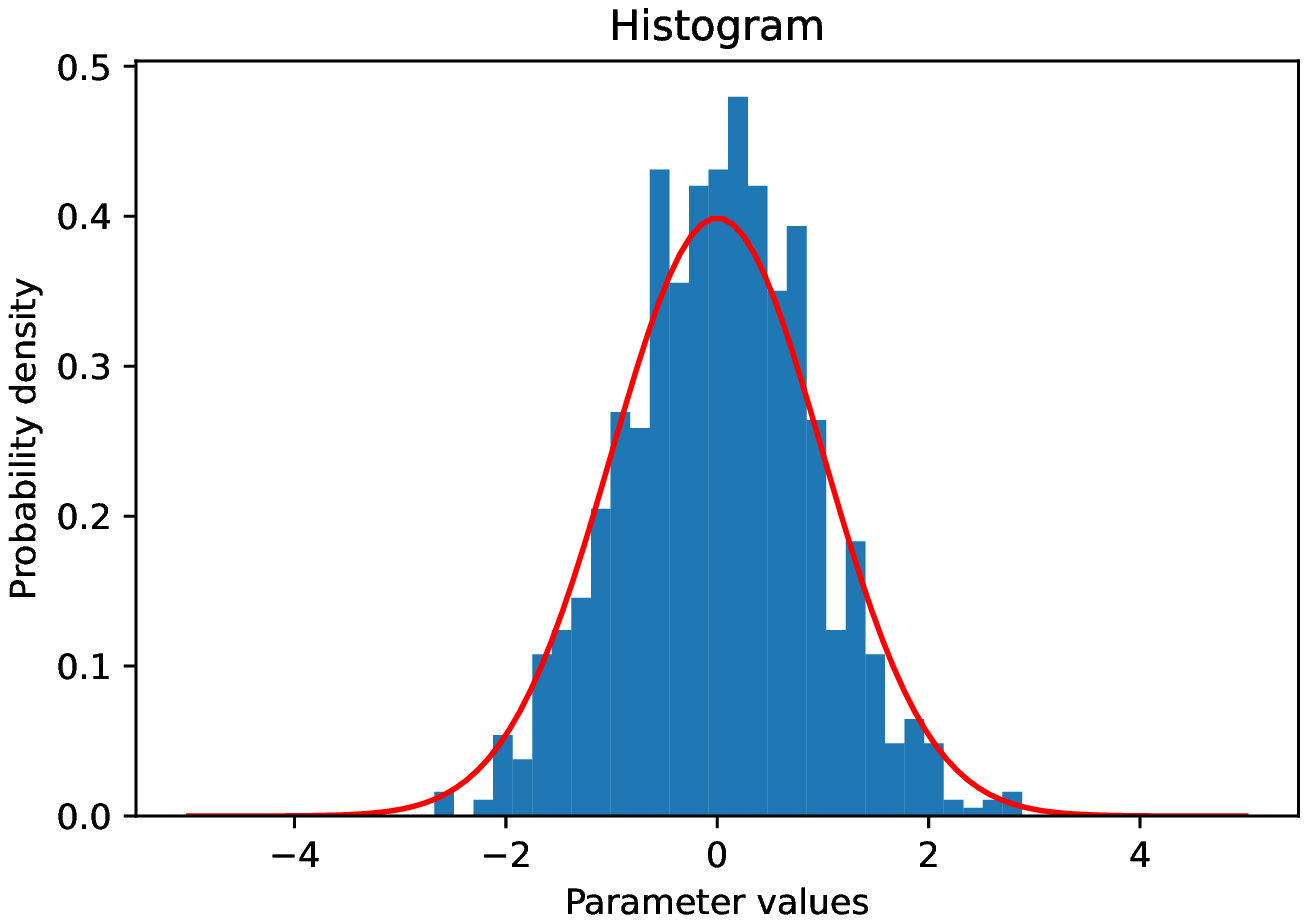}
   
  \end{minipage}
  \begin{minipage}[b]{0.45\linewidth}
    \centering
    \includegraphics[keepaspectratio, scale=0.5]{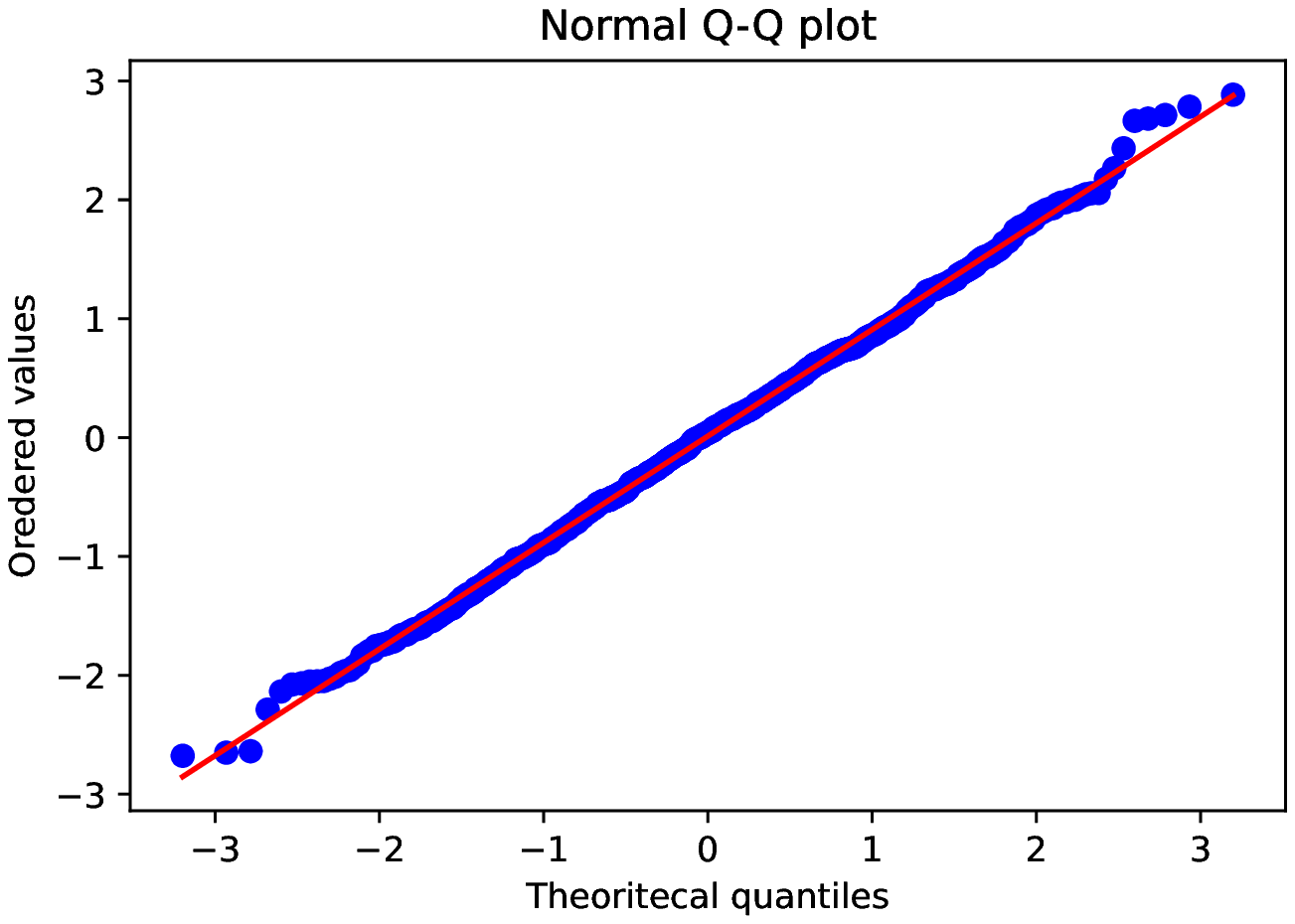}
   
  \end{minipage}
  \caption{Histogram and density function of a standard normal distribution (left) and Normal Q-Q plot (right) through $1000$ experiments in the case $H=0.25,\theta=1.0,\varepsilon=0.005, T=1.0, n=100$}
  \label{fig16}
\end{figure}

\begin{figure}[htbp]
  \begin{minipage}[b]{0.45\linewidth}
    \centering
    \includegraphics[keepaspectratio, scale=0.5]{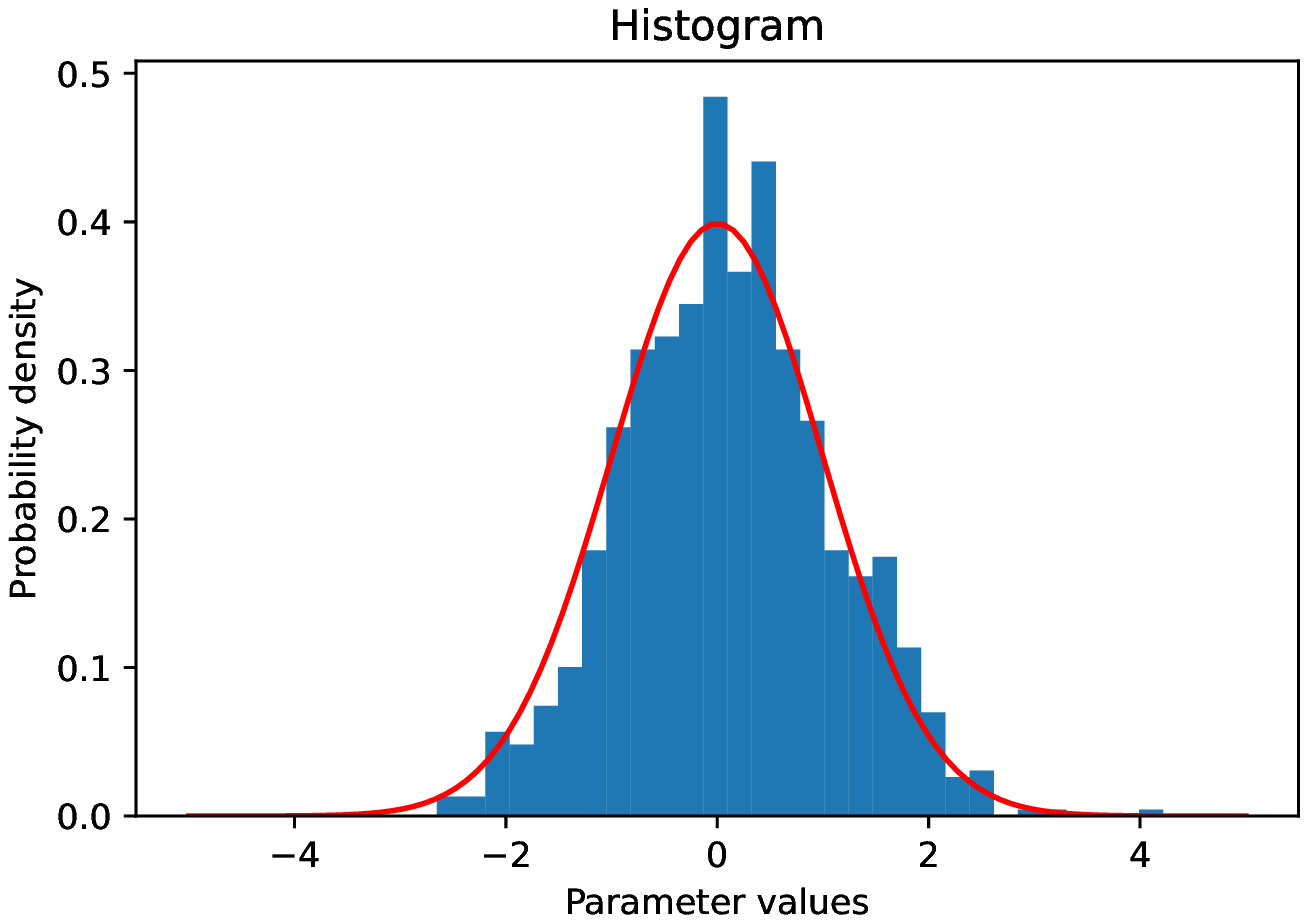}
   
  \end{minipage}
  \begin{minipage}[b]{0.45\linewidth}
    \centering
    \includegraphics[keepaspectratio, scale=0.5]{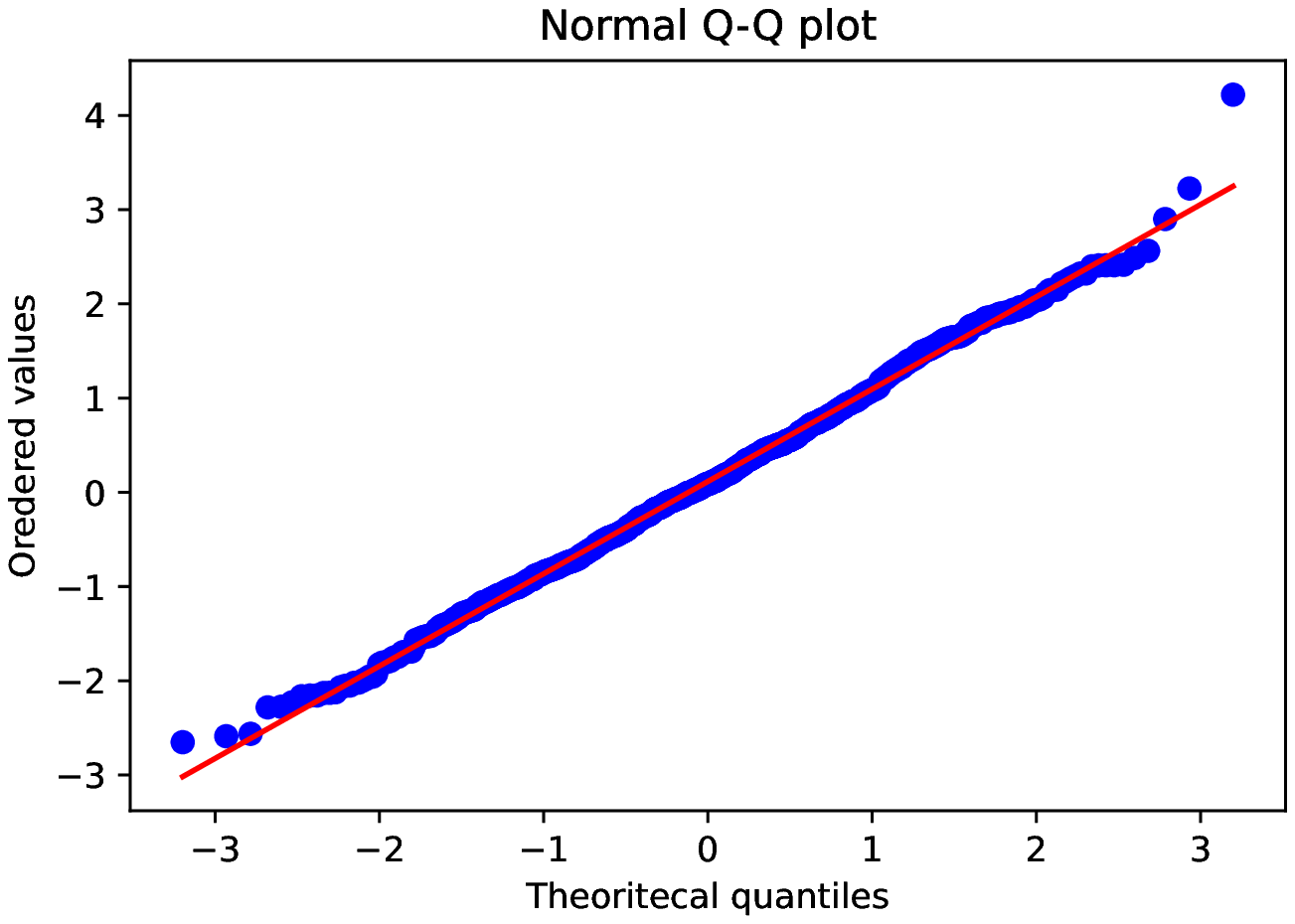}
   
  \end{minipage}
  \caption{Histogram and density function of a standard normal distribution (left) and Normal Q-Q plot (right) through $1000$ experiments in the case $H=0.25,\theta=1.0,\varepsilon=0.005, T=1.0, n=500$}
  \label{fig16}
\end{figure}

\begin{figure}[htbp]
  \begin{minipage}[b]{0.45\linewidth}
    \centering
    \includegraphics[keepaspectratio, scale=0.5]{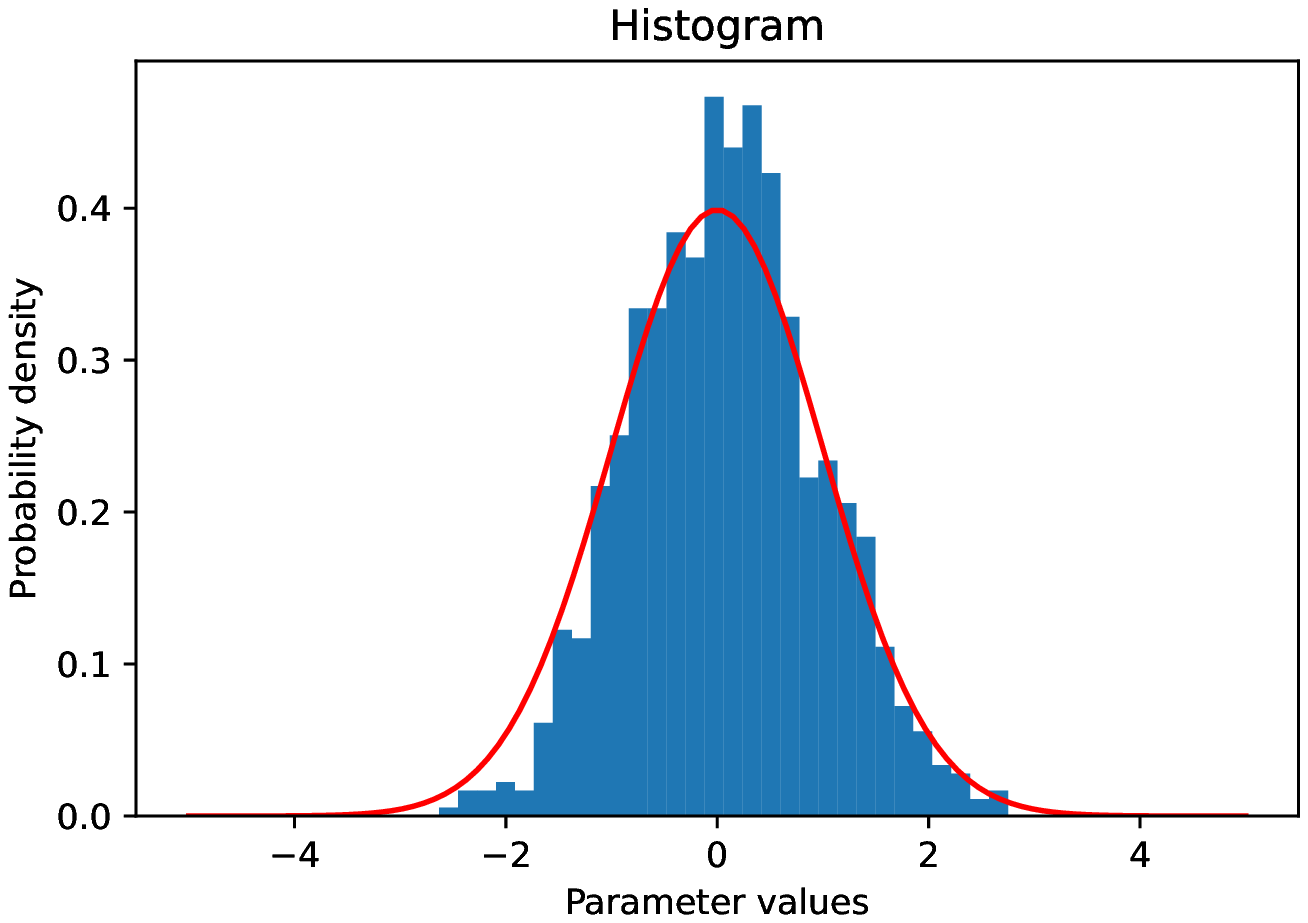}
   
  \end{minipage}
  \begin{minipage}[b]{0.45\linewidth}
    \centering
    \includegraphics[keepaspectratio, scale=0.5]{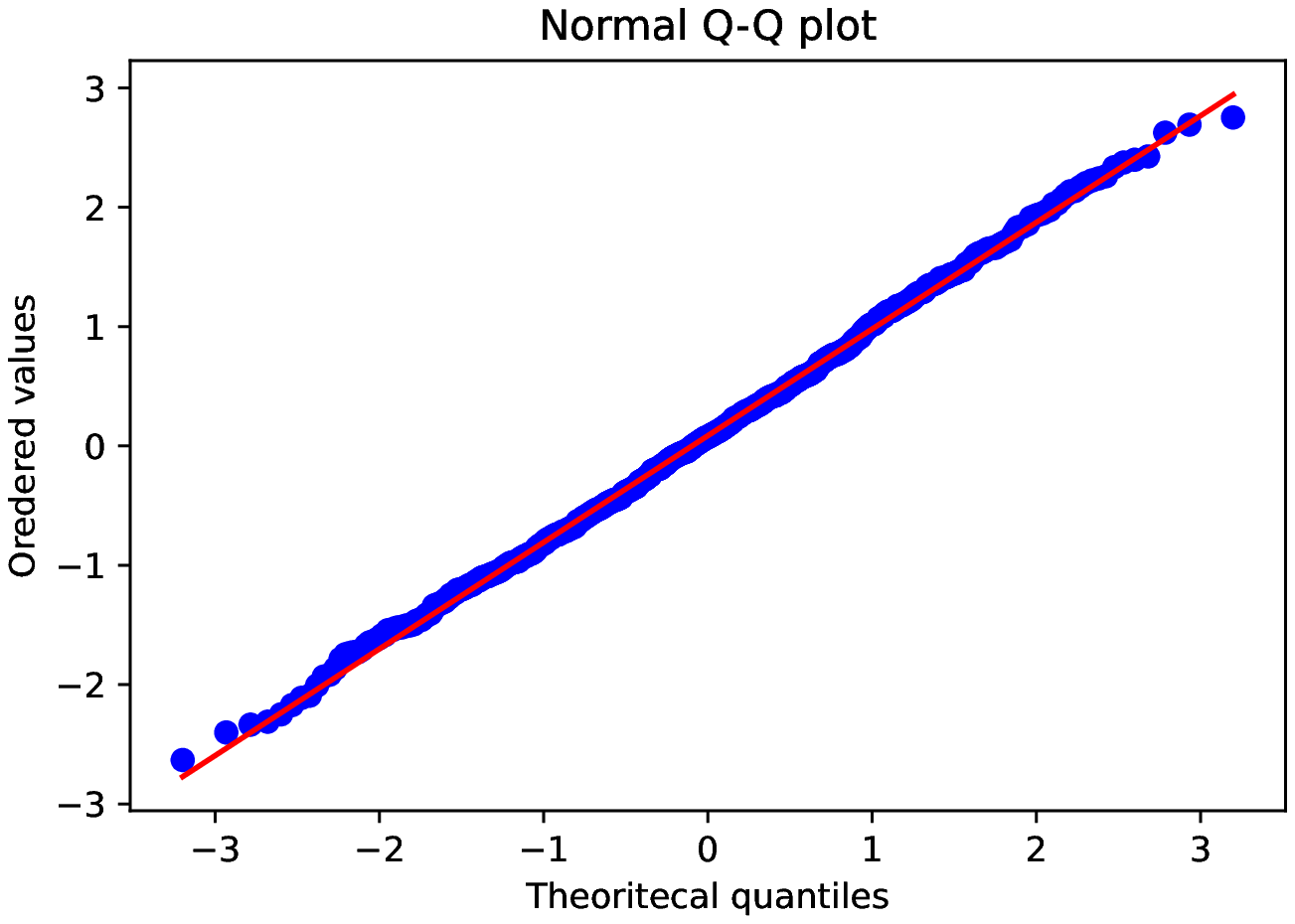}
   
  \end{minipage}
  \caption{Histogram and density function of a standard normal distribution (left) and Normal Q-Q plot (right) through $1000$ experiments in the case $H=0.25,\theta=1.0,\varepsilon=0.005, T=1.0, n=1000$}
  \label{fig17}
\end{figure}

\begin{figure}[htbp]
  \begin{minipage}[b]{0.45\linewidth}
    \centering
    \includegraphics[keepaspectratio, scale=0.5]{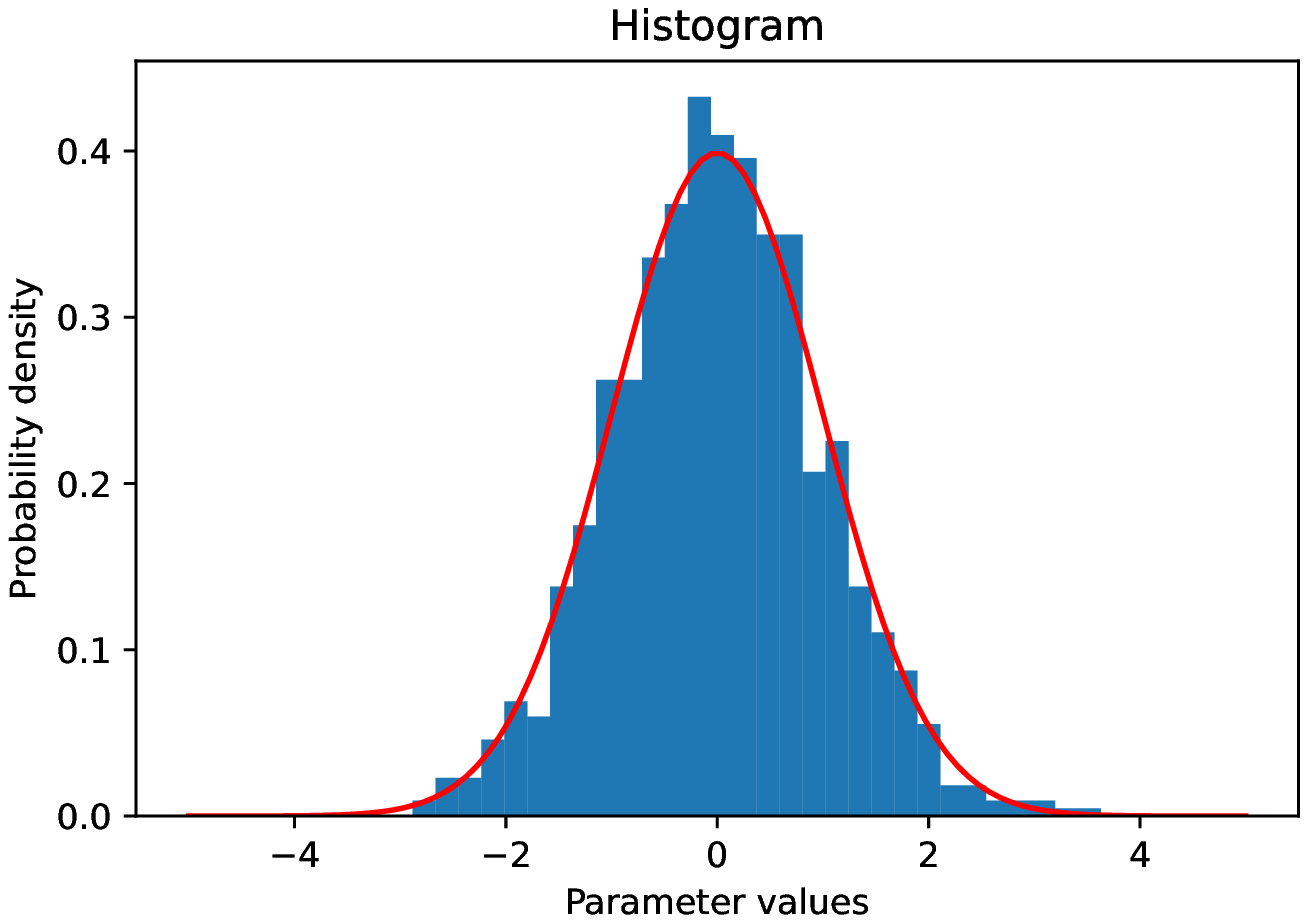}
   
  \end{minipage}
  \begin{minipage}[b]{0.45\linewidth}
    \centering
    \includegraphics[keepaspectratio, scale=0.5]{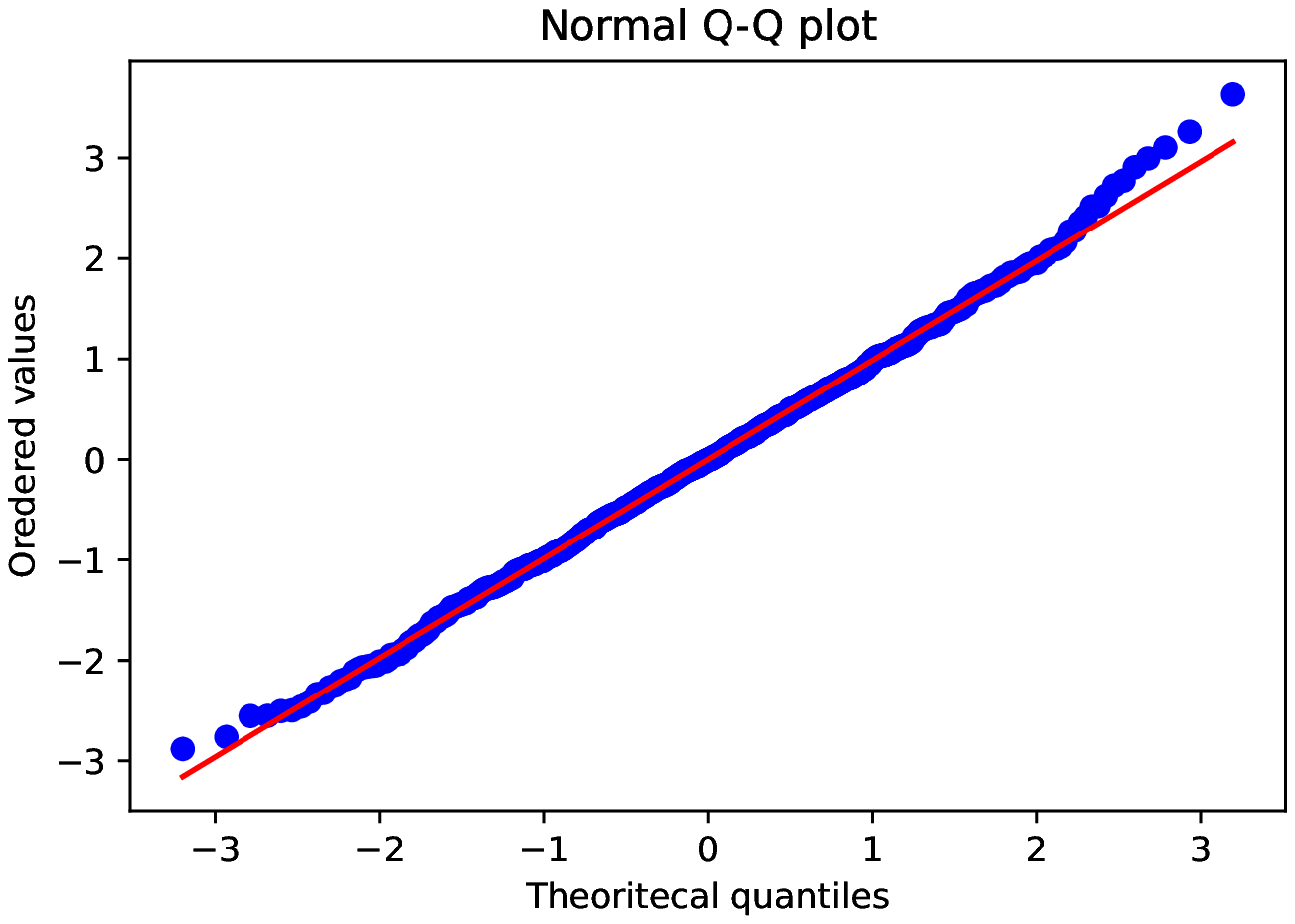}
   
  \end{minipage}
  \caption{Histogram and density function of a standard normal distribution (left) and Normal Q-Q plot (right) through $1000$ experiments in the case $H=0.25,\theta=1.0,\varepsilon=0.001, T=1.0, n=100$}
  \label{fig18}
\end{figure}

\begin{figure}[htbp]
  \begin{minipage}[b]{0.45\linewidth}
    \centering
    \includegraphics[keepaspectratio, scale=0.5]{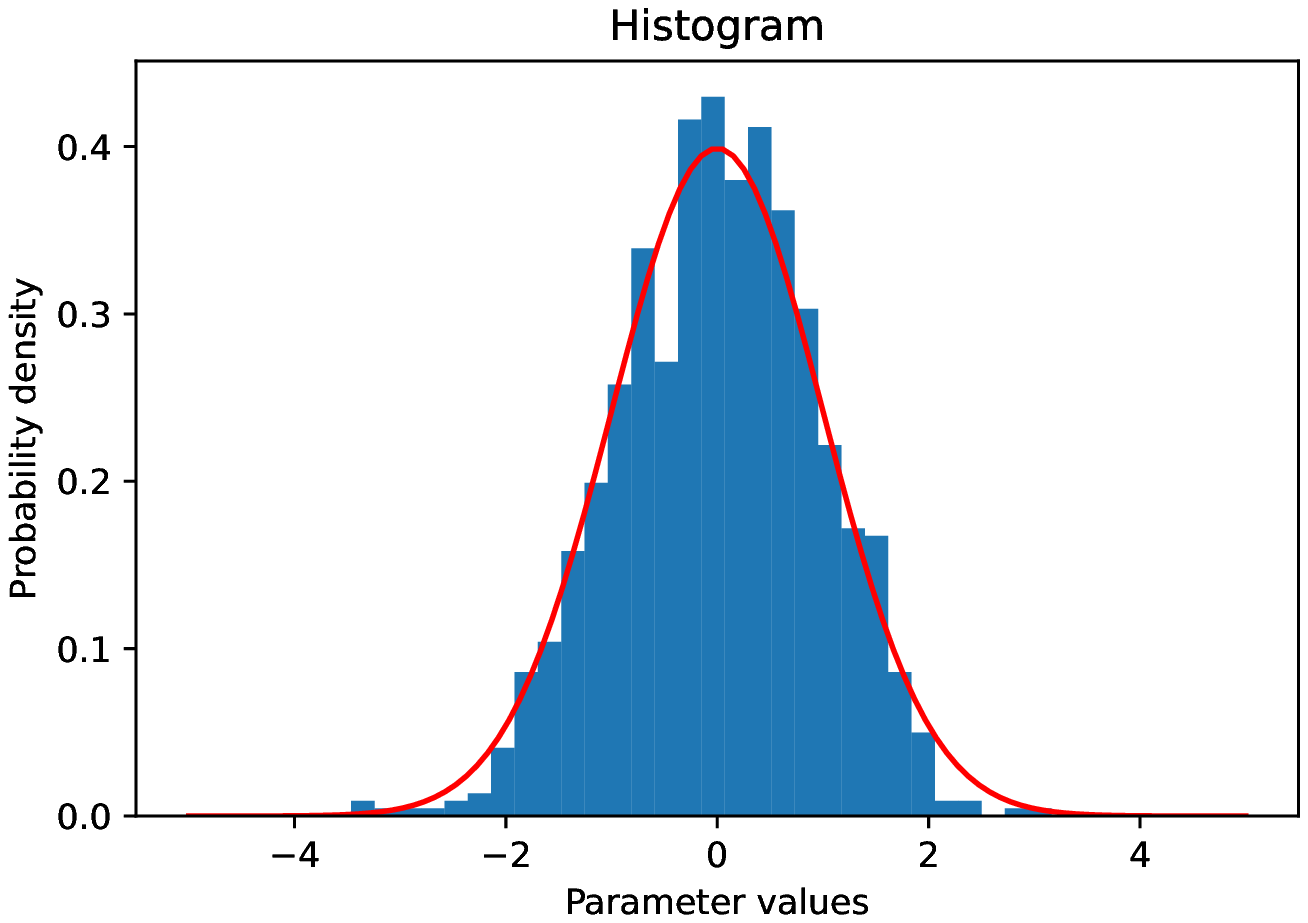}
   
  \end{minipage}
  \begin{minipage}[b]{0.45\linewidth}
    \centering
    \includegraphics[keepaspectratio, scale=0.5]{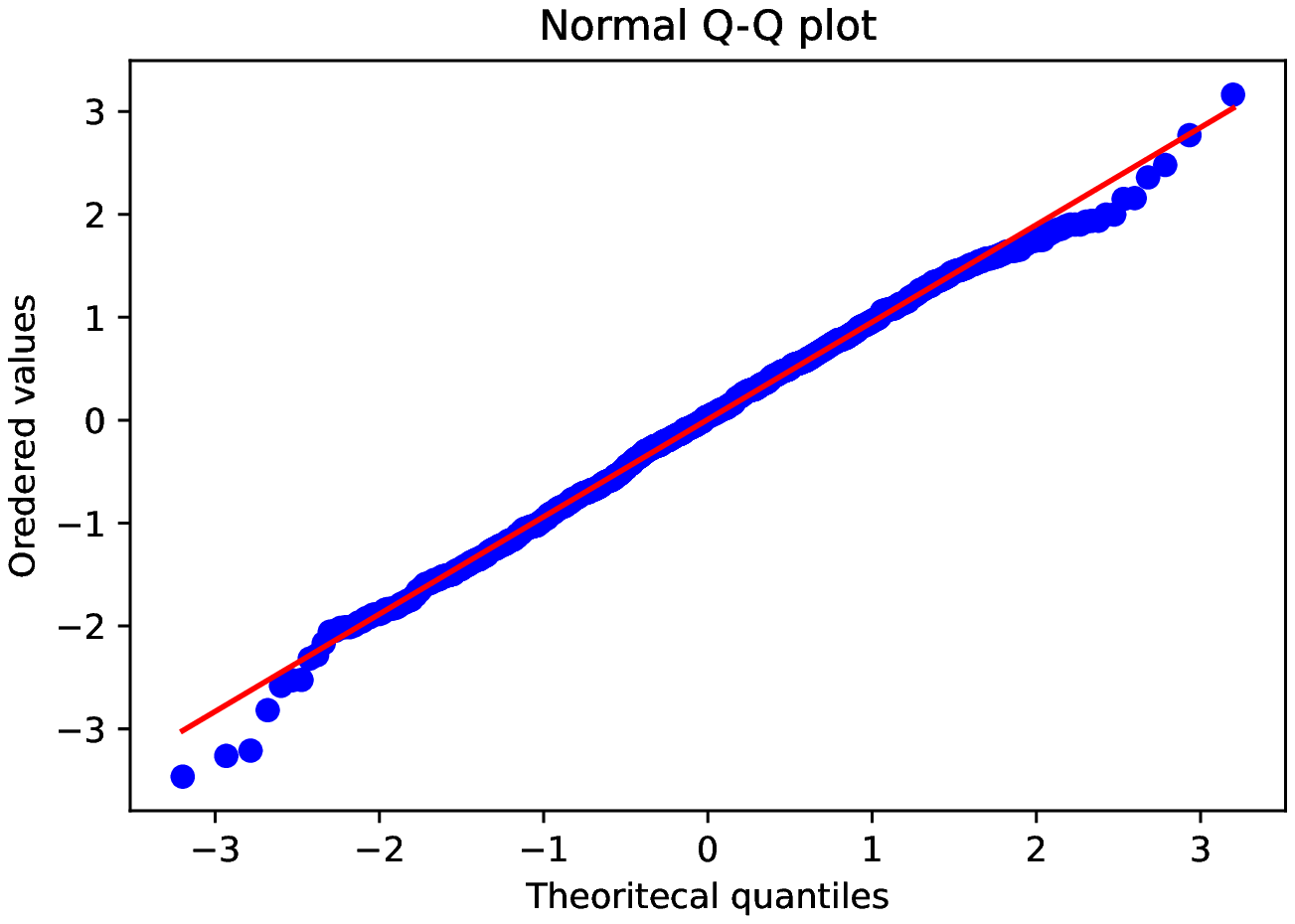}
   
  \end{minipage}
  \caption{Histogram and density function of a standard normal distribution (left) and Normal Q-Q plot (right) through $1000$ experiments in the case $H=0.25,\theta=1.0,\varepsilon=0.001, T=1.0, n=500$}
  \label{fig19}
\end{figure}

\begin{figure}[htbp]
  \begin{minipage}[b]{0.45\linewidth}
    \centering
    \includegraphics[keepaspectratio, scale=0.5]{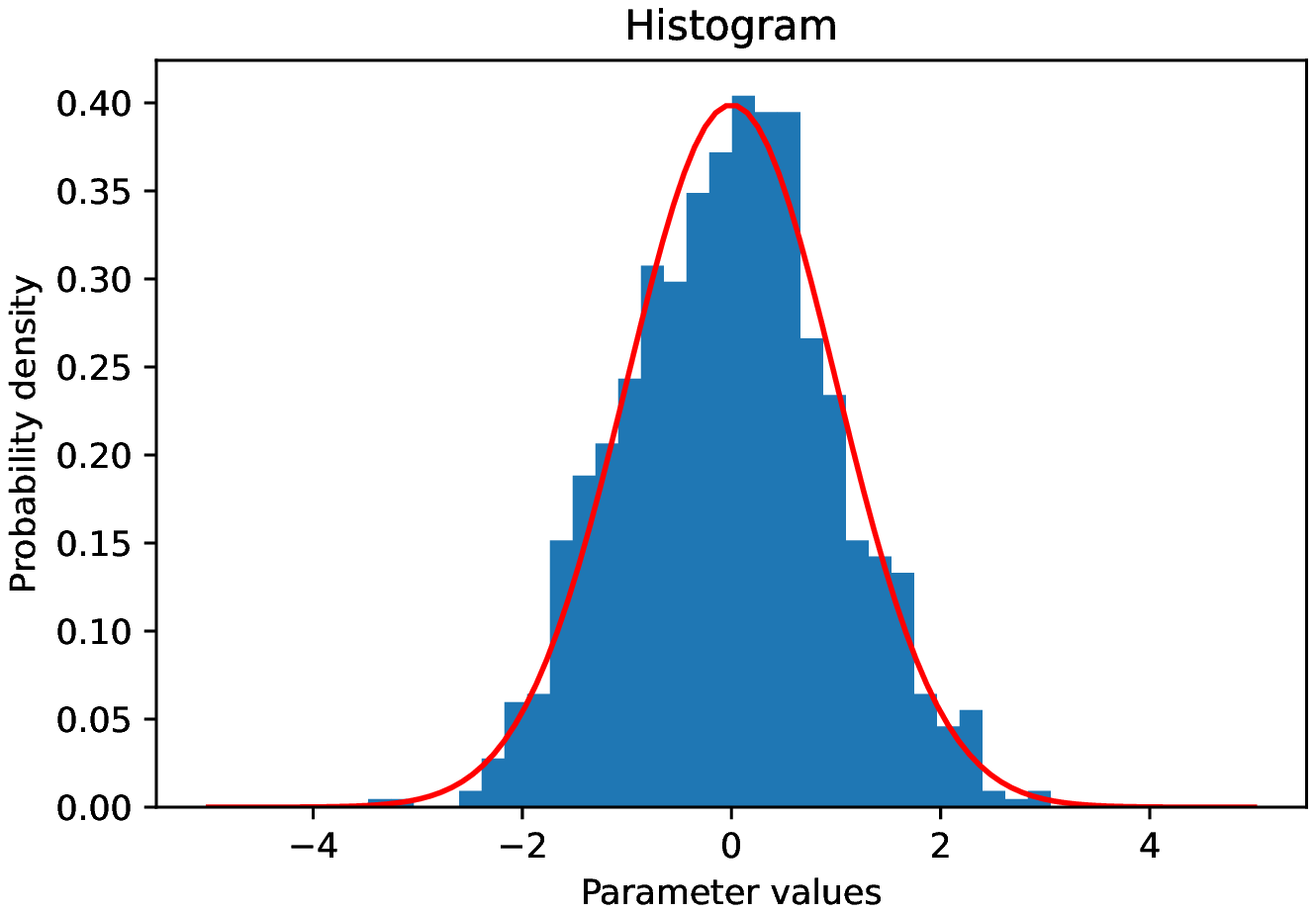}
   
  \end{minipage}
  \begin{minipage}[b]{0.45\linewidth}
    \centering
    \includegraphics[keepaspectratio, scale=0.5]{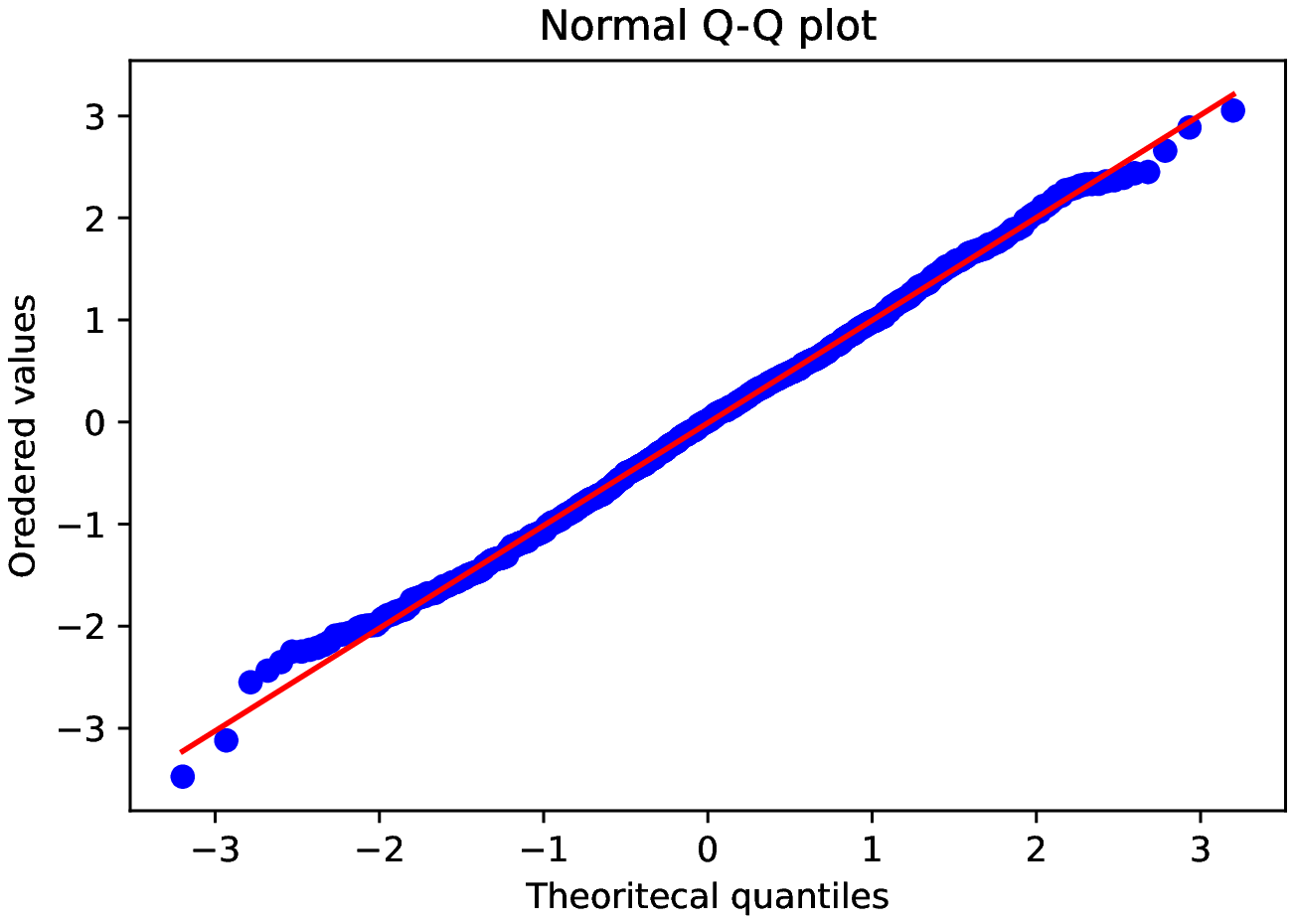}
   
  \end{minipage}
  \caption{Histogram and density function of a standard normal distribution (left) and Normal Q-Q plot (right) through $1000$ experiments in the case $H=0.25,\theta=1.0,\varepsilon=0.001,T=1.0, n=1000$}
    \label{fig20}
\end{figure}

From Table \ref{table1}-\ref{table2}, we can observe the strong consistency results in Theorem \ref{consistency} hold true when $\varepsilon \rightarrow 0 , n \rightarrow \infty$. In addition, we can see  that asymptotic normality in Theorem \ref{main3} hold from Figure \ref{fig1}-\ref{fig9} and Figure \ref{fig13}-\ref{fig20}. When $H = 0.25 $ and $\varepsilon=0.1$ , the convergence order assumptions of Theorem \ref{main3} is not satisfied for $n=100, 500, 1000$, so we can observe that asymptotic normality does not hold, as shown in Figure \ref{fig10}-\ref{fig12}.

%%Reference%%


\begin{thebibliography}{99}
\bibitem{Biagini} Biagini, F.; Hu, Y.; {\O}ksendal, B. and Zhang, T. (2008). \textit{Stochastic calculus for fractional Brownian motion and applications. Probability and its Applications}. Springer-Verlag London, Ltd., London.
\bibitem{Brouste} Brouste, A. and Kleptsyna, M. (2010). Asymptotic properties of MLE for partially observed fractional diffusion system. \textit{Stat. Inference Stoch. Process.} \textbf{13}(1), 1-13.
\bibitem{Chiba} Chiba, K. (2020). An M-estimator for stochastic differential equations driven by fractional Brownian motion with small Hurst parameter. \textit{Stat. Inference Stoch. Process.} \textbf{23}, no. 2, 319-353. 
\bibitem{Frydman} Frydman, R. (1980). A proof of the consistency of maximum likelihood estimators of nonlinear regression models with autocorrelated errors. \textit{Econometrica} \textbf{48}, no. 4, 853-860. 
\bibitem{Hu1}Hu, Y.; Nualart, D. and Zhou, H. (2017). Parameter estimation for fractional Ornstein-Uhlenbeck processes of general Hurst parameter. \textit{Stat Inference Stoch Process}, \textbf{22}, 1-32.
\bibitem{Hu2}Hu Y.; Nualart, D. and Zhou, H. (2019). Drift parameter estimation for nonlinear stochastic differential equations driven by fractional Brownian motion. \textit{Stochastics} \textbf{91}, 1-25.
\bibitem{Kasonga} Kasonga, R. A. (1998). The consistency of a nonlinear least-squares estimator from diffusion processes. \textit{Stochastic Process. Appl}. \textbf{30}, no. 2, 263-275.
\bibitem{Kleptsyna} Kleptsyn, M. and Le Breton, A. (2002). Statistical analysis of the fractional Ornstein-Uhlenbeck type process. \textit{Stat. Inference Stoch. Process.} \textbf{5}(3), 229-248.
\bibitem{Kubilius}Kubilius, K.; Mishura, Y. and Ralchenko, K. (2017). Parameter estimation in fractional diffusion models.  \textit{Bocconi \& Springer Series}, \textbf{8}.
Bocconi and Springer series. Springer, Berlin
\bibitem{Kutoyants1}Kutoyants, U. A. (1984). Parameter Estimation for Stochastic Processes, \textit{Heldermann, Berlin.}
\bibitem{Kutoyants2} Kutoyants, U. A. (1994). Identification of Dynamical Systems with Small Noise. \textit{Kluwer, Dordrecht}.
\bibitem{Lohvinenko} Lohvinenko, S.  and Ralchenko, K. (2017). Maximum likelihood estimation in the fractional Vasicek model. \textit{Lith J Stat}, \textbf{56} (1), 77-87.
\bibitem{Shimizu} Long, H.; Shimizu, Y. and Sun, W. (2013). Least-squares estimators for discretely observed stochastic processes driven by small L\'{e}vy noises. \textit{J. Multivariate Anal}. \textbf{116}, 422-439.
\bibitem{Nakajima} Nakajima, S. and Shimizu, Y. (2021). Asymptotic normality of least squares estimators to stochastic differential equations driven by fractional Brownian motions. Preprint arXiv:2112.12333.
\bibitem{Nakajima2}Nakajima, S. and Shimizu, Y. (2022). Parameter estimation of stochastic differential equation driven by small fractional noise. Preprint arXiv:2201.00372.
\bibitem{Nualart} Nualart, D. (1995). \textit{Malliavin calculus and related topics}. (Probability and its Applications). Berlin Heidelberg New York Springer.
\bibitem{Nuenkirch2} Neuenkirch, A. and Nourdin, I. (2007). The exact rate of convergence of some approximation schemes associated with SDEs driven by a fractional Brownian motion. \textit{J. Theoret. Probab}. \textbf{20}, no. 4, 871-899.
\bibitem{Nuenkirch} Neuenkirch, A. and Tindel, S. (2014). A least square-type procedure for parameter estimation in stochastic differential equations with additive fractional noise. \textit{Stat. Inference Stoch. Process.} \textbf{17}(1), 99-120.
\bibitem{Prakasa} Prakasa Rao, B. L. S. (1999). \textit{Statistical inference for diffusion type processes}. Kendall's Library of Statistics, 8. Edward Arnold, London; Oxford University Press, New York.
\bibitem{Rao} Rao, B. P. (2011). Statistical inference for fractional diffusion processes. \textit{Wile, Berlin}.
\bibitem{Tanaka} Tanaka, K.; Xiao, W. and Yu, J. (2019). Maximum likelihood estimation for the fractional Vasicek model. \textit{SMU Econ Stat Work Paper Series}, 2019(8).
\bibitem{Tudor}Tudor, C. A. and Viens, F. G. (2007). Statistical aspects of the fractional stochastic calculus. \textit{Ann Stat.} \textbf{35} (3),1183-1212.
\bibitem{Uchida} Uchida, M. and Yoshida, N. (2004). Information criteria for small diffusions via the theory of Malliavin-Watanabe, \textit{Stat. Inference Stoch. Process.}  \textbf{7}, 35-67.
\bibitem{Yoshida1} Yoshida, N. (1992). Asymptotic expansion of maximum likelihood estimators for small diffusions via the theory of Malliavin-Watanabe. \textit{Probab. Theory Relat. Fields.} \textbf{92}, 275-3-1.
\bibitem{Yoshida2} Yoshida, N. (2003).  Conditional expansions and their applications. \textit{Stochastic Process.} Appl. \textbf{107}. 53-81.
\bibitem{Young}Young, L.C. (1936). An inequality of H\"{o}lder type, connected with Stieltjes integration, \textit{Acta Math.}, \textbf{67}, 251-282.
\bibitem{Zahle} Z\"{a}hle, M. (2005). Stochastic differential equations with fractal noise. \textit{Math. Nachr.} \textbf{278}(9), 1097-1106.

\end{thebibliography}
\end{document}